\newtheorem{theorem}{Theorem}[section]
\newtheorem{lem}[theorem]{Lemma}
\newtheorem{defi}[theorem]{Definition}
\newtheorem*{thm}{Theorem}
\numberwithin{equation}{section}
\begin{document}
\title
{Stability analysis of isometric embeddings}
\author{Norman Zergaenge}

\date{\today}

\begin{abstract}
In this work we prove the fact that, for a short time, it is possible to construct a smooth parametrized family of isometric embeddings of 
an arbitrary smooth parametrized family of Riemannian metrics on a smooth closed manifold into an Euclidean space. 
In order to prove this statement we work out stability estimates within the local perturbation method in \cite[Section 5]{gunther1989einbettungssatz} to derive a time-dependent local perturbation method around a free isometric embedding.
Iteratively, we use this time-dependent local perturbation method to construct the desired family of isometric embeddings. 

\end{abstract}

\maketitle 
\tableofcontents

\section{Introduction and statement of results}

A very fundamental problem in Riemannian geometry is whether it is possible to regard a given closed Riemannian manifold $(M^n,g)$ as a submanifold
of an Euclidean space. That is to say: exists a sufficiently smooth embedding $F: M\longrightarrow \mathbb{R}^N$ so that $F^{\ast}(\delta)=g$ 
where $F^{\ast}(\delta)$ denotes the pullback of the standard metric?
In case of existence one is naturally interested in a codimension $N-n$ which is as small as possible and one wishes to have a mapping $F_0$ which
is as regular as possible. It is worth noting that the \textit{isometry condition} $F^{\ast}(\delta)=g$ is equivalent to
\begin{equation*}
g_{ij}=\partial_i F\cdot \partial_j F=\sum_{l=1}^n{\partial_i F^l\cdot \partial_j F^l}\hspace{0.5cm}\text{for all }1\leq i\leq j\leq n
\end{equation*}
in local coordinates, which is a system of $n(n+1)/2$ partial differential equations of first order.

Without any doubt \textit{the breakthrough} on this field, apart from local precursors (cf. \cite{schlaefli1871nota}, \cite{janet1926}, \cite{cartan1927}), are the \textit{isometric embedding theorems} by John Nash in \cite{nash1954c}, \cite{nash1956imbedding} and \cite{nash1966analyticity}. The latter two works contain 
the statement that each $n$-dimensional
closed Riemannian manifold (of class $C^k$ where $3\leq k\leq \infty$ or analytic) has an isometric embedding (also of class $C^k$ where $3\leq k\leq \infty$ or analytic) into the space $\mathbb{R}^{N(n)}$ where $N(n)=n(3n+11)/2$. Over the years, there have been efforts to reduce the dimension $N(n)$, 
where we refer to \cite{gromov1986partial} for example.

At the end of the 1980s, Matthias G\"unther has released a method which allows to embed a smooth Riemannian manifold isometrically into the
Euclidean space of dimension $q(n):=\max\{n(n+5)/2,n(n+3)/2+5\}$ (cf. \cite[Satz 1.2]{gunther1989einbettungssatz}).
Similar to J. Nash in \cite{nash1956imbedding}, M. G\"unther has constructed the desired isometric embedding around a such called 
\textit{free embedding} (cf. Definition \ref{freemap}) which is related to the given metric $g$ in some sense. 
In this work we focus our attention on M. G\"unther's \textit{modification technique}, which yields the follows result:

\begin{thm}(cf. \cite[Satz 1.1]{gunther1989einbettungssatz}, \cite[Theorem 3]{gunther1991isometric})
Let $M$ be a smooth closed $n$-dimensional manifold, let $g$ be a smooth metric on $M$ and
let $F_0\in C^{\infty}(M,\mathbb{R}^q)$, $q\geq n(n+3)/2+5$, be a free embedding so that $g-F_0^{\ast}(\delta)$
is positive definite. Then, for each $\epsilon>0$, there exists a free embedding $F\in C^{\infty}(M,\mathbb{R}^q)$ so that 
\begin{equation*}
F^{\ast}(\delta)=g\hspace{0.5cm}\text{and}\hspace{0.5cm}\sup_{x\in M}{|F(x)-F_0(x)|}\leq \epsilon
\end{equation*}
\end{thm}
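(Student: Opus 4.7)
The plan is to build $F$ by a finitely iterated local perturbation of $F_0$, which is the core of Günther's modification technique. Set the metric defect $h := g - F_0^*(\delta)$, a smooth positive definite symmetric $(0,2)$-tensor by hypothesis. The first step is a purely algebraic decomposition: using a partition of unity $\{\chi_k\}_{k=1}^K$ subordinate to a covering of $M$ by small coordinate charts, together with the standard fact that a positive definite symmetric matrix can be locally written as a convex combination of rank-one matrices, one produces smooth functions $a_k, \varphi_k \in C^\infty(M)$ with each $\mathrm{supp}\,\varphi_k$ contained in a chart, such that
\begin{equation*}
h \;=\; \sum_{k=1}^{K} a_k^2\, d\varphi_k \otimes d\varphi_k.
\end{equation*}

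The second step, which is the heart of the matter, is the \emph{single-step local perturbation lemma}: given a free embedding $\widetilde F$ and a summand $a^2\, d\varphi \otimes d\varphi$ with $\|a\|_{C^\ell}$ sufficiently small and $\varphi$ supported in a chart, find $\xi \in C^\infty(M,\mathbb{R}^q)$, small and supported in a neighbourhood of $\mathrm{supp}\,\varphi$, so that $\widetilde F + \xi$ is again a free embedding and
\begin{equation*}
(\widetilde F + \xi)^*(\delta) - \widetilde F^*(\delta) \;=\; a^2\, d\varphi \otimes d\varphi.
\end{equation*}
Expanding the pull-back, the isometry condition becomes a first-order nonlinear system in $\xi$. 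Because $\widetilde F$ is free, the $n(n+1)/2$ second-order data $\partial_i \partial_j \xi \cdot \widetilde F_\nu$ may be prescribed independently, and Günther's key trick is to substitute an ansatz for $\xi$ in the span of the frame $\{\partial_i \widetilde F,\, \partial_i \partial_j \widetilde F\}$ that converts the first-order system into one of the form $\xi = T(\xi)$ where $T$ factors through the inverse of a fixed invertible elliptic operator (a suitable modification of $\Delta$); this removes the Nash-type loss of derivatives. The Banach fixed-point theorem in a small ball of $C^{k,\alpha}$ produces a unique smooth $\xi$, smallness of $\xi$ in $C^2$ preserves freeness by continuity, and the elliptic estimates give $\|\xi\|_{C^0} \le C(\widetilde F)\, \|a\|_{C^\ell}$.

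The third step is the iteration: set $F^{(0)} := F_0$, and inductively define $F^{(k)}$ by applying the single-step lemma to $\widetilde F := F^{(k-1)}$ and the term $a_k^2\, d\varphi_k \otimes d\varphi_k$. Each $F^{(k)}$ is a smooth free embedding and
\begin{equation*}
\bigl(F^{(k)}\bigr)^{\!*}(\delta) \;=\; F_0^*(\delta) + \sum_{j=1}^{k} a_j^2\, d\varphi_j \otimes d\varphi_j,
\end{equation*}
so that $F := F^{(K)}$ is the desired isometric free embedding. The $C^0$-closeness $\sup_M|F - F_0| \le \epsilon$ is achieved by refining the partition of unity so that each perturbation $\xi^{(k)}$ satisfies $\|\xi^{(k)}\|_{C^0} \le \epsilon / K$; this is compatible with the single-step estimates provided the constants $C(F^{(k-1)})$ stay uniformly bounded along the iteration.

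The hard part is exactly the single-step lemma together with the \emph{stability} of its constants: one must control $C(F^{(k-1)})$ in terms of $C(F_0)$ and the accumulated perturbation, so that the fixed-point contraction does not degenerate as $k$ grows. Everything else — the rank-one decomposition, the preservation of freeness, and the final $C^0$ accumulation — is standard once this quantitative local perturbation result is in place. It is precisely these stability estimates that the abstract of the present paper announces as the input for the time-dependent extension.
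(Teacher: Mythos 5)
Your sketch misses the step of G\"unther's argument that makes the fixed-point method applicable, and this is not a technicality but the step where the hypothesis $q\geq n(n+3)/2+5$ actually enters. You reduce everything to a ``single-step local perturbation lemma'' applied to each rank-one summand $a_k^2\,d\varphi_k\otimes d\varphi_k$, and you yourself have to require ``$\|a\|_{C^\ell}$ sufficiently small'' in order to quote a Banach fixed-point argument in $C^{k,\alpha}$. But the rank-one decomposition $g-F_0^*(\delta)=\sum a_k^2\,d\varphi_k\otimes d\varphi_k$ does not produce small summands: $g-F_0^*(\delta)$ is a fixed, positive definite tensor, so with finitely many terms the $a_k$ are of order one, while shrinking each $a_k$ by splitting the same rank-one piece into $m$ copies of $(a_k/\sqrt m)^2\,d\varphi_k\otimes d\varphi_k$ sends $K\to\infty$ and turns your argument into an infinite iteration on one chart, with exactly the ``does $C(F^{(k-1)})$ stay bounded?'' problem that you flag at the end but never resolve. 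That problem is not a detail to be supplied; it is the Nash-type loss-of-derivatives issue that the whole construction is designed to avoid, and an argument that merely defers it is not a proof.

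The paper is explicit that G\"unther's local modification of $F_0$ on a single chart is a composite of \emph{two} perturbations: a first perturbation, which uses $q\geq n(n+3)/2+5$ (five extra dimensions beyond what freeness needs) to realise the ``big part'' of the locally supported summand, leaving a remainder that can be made arbitrarily small in $C^{2,\alpha}$; and only then a second perturbation, the fixed-point method of Section~\ref{sol_per} (equations \eqref{eq:5.18}, \eqref{eq:5.16} and Theorem~\ref{satz:5.1}), which removes that small remainder. Your proposal contains only the analogue of the second perturbation. A symptom of the gap is that your sketch never uses the hypothesis $q\geq n(n+3)/2+5$ at all --- freeness already holds for $q=n(n+3)/2$ --- so if your outline were complete it would prove a strictly stronger theorem than G\"unther's, which should have been a warning sign. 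To repair the argument you need to insert the first perturbation (the content of Sections 3, 4 and 6 of G\"unther's paper) before invoking the contraction mapping; once each local step produces a genuinely $C^{2,\alpha}$-small error, Theorem~\ref{satz:5.1} applies as stated and the finite iteration over the charts goes through, with the $C^0$-closeness obtained from the estimate $|u|_{C^{2,\alpha}}\leq C|E[F_0](0,f)|_{C^{2,\alpha}}$ in that theorem rather than from a counting argument $\|\xi^{(k)}\|_{C^0}\leq\epsilon/K$.
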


In order to prove this result, M. G\"unther decomposes the Riemannian metric $g-F_0^{\ast}(\delta)$ into a sum of tensor fields
that have a special shape and that are in particular compactly contained in a local chart (cf. \cite[Satz 2.2]{gunther1989einbettungssatz}).
Such a locally supported tensor field $h$ has the advantage that the metric $F_0^{\ast}(\delta)+h$ is induced by a free embedding $F$
which lies in an arbitrary small $C^0$-neighborhood of $F_0$ and that $F$ is just a local perturbation of $F_0$ (cf. \cite[Satz 2.4 and Section 6]{gunther1989einbettungssatz}).
This \textit{local modification method} is composed of \textit{two perturbation methods} where the \textit{first perturbation}
is based on the fact that the space dimension of the ambient space is greater than or equal to $n(n+3)/2+5$ (cf. \cite[Section 3 and 4]{gunther1989einbettungssatz}).
The \textit{first perturbation} (cf. \cite[Satz 4.1]{gunther1989einbettungssatz}) realizes the ``big part'' of the desired modification so that the remaining part
is arbitrary small in the $C^{2,\alpha}$-sense (cf. \cite[(6.1)]{gunther1989einbettungssatz}). The subsequent \textit{second perturbation} (cf. \cite[Satz 5.5]{gunther1989einbettungssatz}, \cite[Theorem]{gunther1989perturbation}) eliminates this small error. It is worth mentioning that the second dimension restriction in G\"unther's embedding theorem (cf. \cite[Satz 1.2]{gunther1989einbettungssatz}) is due to the fact
that it is always possible to construct a free embedding of a smooth closed manifold into the Euclidean space of dimension $n(n+5)/2$ 
(cf. \cite{nash1956imbedding}, \cite[Satz 1.2]{gunther1989einbettungssatz}, \cite[Theorem 2.1]{andrews2002notes}).

Within this framework we are interested in the following problem: Given a smooth closed manifold $M^n$ and a smooth one-parameter family of Riemannian metrics $(g(t))_{t\in[0,T]}$.

\textit{Is it possible to construct a family of mappings $F\in C^{\infty}(M\times [0,T^{\ast}],\mathbb{R}^{q(n)})$, where $T^{\ast}\in (0,T]$, so that for each $t\in [0,T^{\ast}]$
the mapping $F(\cdot,t)$ is an isometric embedding of the Riemannian manifold $(M,g(\cdot,t))$}?

We point out that the core of this problem is the regularity of the family of embeddings
with respect to the time parameter $t$.

In order to construct such a smooth family of isometric embeddings it seems sensible to refer
to the ``static situation'' at time $t=0$ and to analyze the time-dependency of one
of the perturbation techniques in \cite{nash1956imbedding} or \cite{gunther1989einbettungssatz}
for instance. In this work we examine the perturbation technique in \cite{gunther1989einbettungssatz} regarding
stability with respect to the time parameter. In particular we are interested in the
time-dependency of the
 part of G\"unther's modification technique which we have called \textit{second perturbation} (cf. \cite[Satz 5.5]{gunther1989einbettungssatz})
in the above description. Appropriate adjustments on the time-dependent situation yield the following
result:

\begin{theorem}\label{globeinb}
Let $M$ be a smooth closed $n$-dimensional manifold and let $(g(\cdot,t))_{t\in[0,T]}$ be a smooth family of Riemannian metrics on $M$. Given a free isometric
embedding $F_0\in C^{\infty}(M,\mathbb{R}^q)$ of the Riemannian manifold $(M,g(\cdot,0))$. Then, there exists $T^{\ast}\in (0,T]$
and a family of free embeddings $(F(\cdot,t))_{t\in[0,T^{\ast}]}\subset C^{\infty}(M,\mathbb{R}^{q})$ with $F\in C^{\infty}(M\times[0,T^{\ast}],\mathbb{R}^{q})$ and $F(\cdot,0)=F_0$ so that for all  $t\in [0,T^{\ast}]$ the equality
\begin{equation*}
F(\cdot,t)^{\ast}(\delta)=g(\cdot,t)\hspace{0.5cm}
\end{equation*}
holds on $M$.
\end{theorem}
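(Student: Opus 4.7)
The strategy is to adapt G\"unther's modification technique so that every ingredient depends smoothly on $t$, using $F_0$ as a base point that already realizes $g(\cdot,0)$. The natural defect to absorb is $h(\cdot,t):=g(\cdot,t)-F_0^{\ast}(\delta)=g(\cdot,t)-g(\cdot,0)$, which is smooth in $(x,t)$, vanishes at $t=0$, and can be made arbitrarily small in $C^{2,\alpha}(M)$ by choosing $T^{\ast}$ small. The crucial observation is that smallness in $C^{2,\alpha}$ (rather than positive definiteness) is precisely what is needed for the \emph{second} perturbation of \cite[Satz 5.5]{gunther1989einbettungssatz}, which is built around a free map; hence the first perturbation of \cite[Section 4]{gunther1989einbettungssatz} is never invoked.

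\emph{Step 1 (time-dependent local perturbation).} I would prove a parameter-dependent stability version of \cite[Satz 5.5]{gunther1989einbettungssatz}: given a free embedding $F_{\ast}$ and a smooth family $(h(\cdot,t))_{t\in[0,T^{\ast}]}$ of symmetric $(0,2)$-tensor fields, compactly supported in one coordinate chart of $M$, with $h(\cdot,0)=0$ and $\|h(\cdot,t)\|_{C^{2,\alpha}}$ sufficiently small, there exists a smooth family $(F(\cdot,t))_{t\in[0,T^{\ast}]}$ of free embeddings with $F(\cdot,0)=F_{\ast}$, $F(\cdot,t)\equiv F_{\ast}$ outside the chart, and $F(\cdot,t)^{\ast}(\delta)=F_{\ast}^{\ast}(\delta)+h(\cdot,t)$. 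The existence at each fixed $t$ is G\"unther's contraction argument. The novelty is tracking every constant in that argument in a form continuous in $t$, and upgrading the pointwise-in-$t$ fixed point to a $C^{\infty}$-family. For the latter, I would differentiate the defining equation in $t$ to obtain a linear equation for $\partial_t^{\ell}F$ whose leading operator is the linearization of $F\mapsto F^{\ast}(\delta)$ at a free map; freeness furnishes a bounded right inverse of this linearization in the relevant H\"older spaces, so $\partial_t^{\ell}F$ is controlled by $\partial_t^{\ell}h$ and lower-order terms, and joint $C^{\infty}$-regularity in $(x,t)$ follows by bootstrap.

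\emph{Step 2 (smooth decomposition and iteration).} Next I would establish a $t$-smooth analogue of G\"unther's decomposition \cite[Satz 2.2]{gunther1989einbettungssatz}, writing $h(\cdot,t)=\sum_{k=1}^{K}h_k(\cdot,t)$ with each $h_k$ compactly supported in a coordinate chart, smooth in $t$, and vanishing at $t=0$. Since I only need $C^{2,\alpha}$-smallness and not positivity, the decomposition can be obtained directly by multiplying the standard chart-wise decomposition by a partition of unity and carrying the factor $t$ (or its analogue) through, so that all $h_k(\cdot,t)$ are as small as desired on $[0,T^{\ast}]$ for $T^{\ast}$ small enough. Setting $F^{(0)}:=F_0$ and applying Step 1 successively to the pair $(F^{(k-1)},h_k(\cdot,t))$, I obtain $F^{(k)}(\cdot,t)$ with $(F^{(k)})^{\ast}(\delta)=g(\cdot,0)+\sum_{j\leq k}h_j(\cdot,t)$; after $K$ steps, $F:=F^{(K)}$ satisfies the claim.

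\emph{Principal obstacle.} The entire weight of the proof lies in Step~1. One has to show that G\"unther's static fixed-point scheme not only produces a solution at each $t$ but yields a $C^{\infty}$-family, with quantitative estimates on $\partial_t^{\ell}F$ that remain valid through the finite iteration of Step~2 without the existence time $T^{\ast}$ collapsing. Engineering these stability estimates -- keeping the smallness threshold of Step~1 uniform along $F^{(0)},F^{(1)},\ldots,F^{(K-1)}$ while simultaneously propagating $t$-derivative bounds through the H\"older-based contraction -- is the technical heart of the argument and the place where the freeness of the intermediate embeddings is used repeatedly.
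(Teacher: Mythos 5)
Your overall architecture matches the paper closely: take the defect $g(\cdot,t)-g(\cdot,0)$, cut it into chart-supported pieces via a partition of unity, make each piece $C^{2,\alpha}$-small by shrinking the time interval, and apply a time-dependent version of G\"unther's \emph{second} perturbation (never touching Section~4 of \cite{gunther1989einbettungssatz}) once per chart; you also correctly identify as the delicate point that after the first chart the base embedding becomes time-dependent, so the operator $E[\cdot]$ inherits a $t$-dependence that must be controlled through the iteration. All of this is what the paper does.

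The gap is in your mechanism for the $t$-regularity in Step~1. You propose to differentiate the isometry equation $\partial_i F\cdot\partial_j F = G_{ij}(\cdot,t)$ in $t$ and conclude that $\partial_t^\ell F$ is controlled because the linearization $v\mapsto\partial_i F\cdot\partial_j v+\partial_j F\cdot\partial_i v$ has a bounded right inverse at a free map. That right inverse does exist (algebraically, via $E[F]$), but it is a right inverse of a heavily underdetermined system, and there is no reason for the actual $\partial_t u$ to lie in its range. The function $u(\cdot,t)$ produced by the fixed-point scheme is not determined by the isometry equation alone; it is the unique solution pinned down by the auxiliary constraints \eqref{eq:5.7}–\eqref{eq:5.8} (equivalently, by the fixed-point equation $v=\Phi[F_0,a,f(\cdot,t)](v)$ for the operator in \eqref{eq:5.18}). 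To bound $\partial_t^\ell u$ one must differentiate \emph{that} equation in $t$, not the pullback equation, because the linearized isometry operator is not elliptic and does not by itself propagate regularity to a chosen solution branch. The paper resolves this precisely: it shows the iterates $v_k(\cdot,t)$ are smooth in $(x,t)$ (Lemma~\ref{lem:7.3}), derives time-differentiated versions of all the continuity/regularity estimates for $N_i$, $M_{ij}$, $P$, $Q$, $E$ (Lemmas~\ref{lem:7.2}--\ref{lem:7.7}), and proves by induction on the time-derivative order $r$ and the spatial order $m$ that $\|\partial_t^r\partial^\beta v_k\|_{C^0}$ is bounded uniformly in $k$ (condition~\eqref{eq:7.26}), so the $C^{2,\alpha}$-limit is in fact $C^\infty$ jointly. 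Your proposal skips exactly this bookkeeping; an implicit-function-theorem argument on $G(v,t):=v-\Phi[F_0,a,f(\cdot,t)](v)$ would be an acceptable alternative route, but your stated plan of inverting the linearized pullback operator is not, as written, a valid substitute.
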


In Section \ref{sol_per} we give a brief overview over M. G\"unther's method to solve \textit{the local perturbation problem}  (cf. \cite[Satz 5.5]{gunther1989einbettungssatz}). Subsequently, in Section \ref{stab_per}, we work out estimates to show that \textit{the local perturbation problem} 
is stable with respect to a time parameter. As a consequence we are able to construct a smooth family of mappings that
realizes the isometric embedding of a smooth family of Riemannian metrics  ``evolving'' in a small domain of a local chart, for a short time.
It stands to reason to apply this method to other charts iteratively but one needs to take into account that the first application
of this local modification technique generates a time-dependent family of free embeddings. Since some operators within this technique depend
on the ``initial'' free embedding (cf. \eqref{eq:5.18}) one needs to be careful about these iteration steps.

Finally, using the isometric embedding theorem in \cite[Folgerung 1.3]{gunther1989einbettungssatz} we obtain:

\begin{theorem}
Let $M$ be a smooth closed $n$-dimensional manifold and let $(g(\cdot,t))_{t\in[0,T]}$ be a smooth family of Riemannian metrics on $M$. Then, there exists $T^{\ast}\in (0,T]$
and a family of free embeddings $(F(\cdot,t))_{t\in[0,T^{\ast}]}\subset C^{\infty}(M,\mathbb{R}^{q(n)})$ where
 $q(n):=\max\left\{n(n+5)/2,(n+3)/2+5 \right\}$ with $F\in C^{\infty}(M\times[0,T^{\ast}],\mathbb{R}^{q(n)})$ so that for all $t\in [0,T^{\ast}]$ the 
equation
\begin{equation*}
F(\cdot,t)^{\ast}(\delta)=g(\cdot,t)\hspace{0.5cm}
\end{equation*}
holds on $M$.
\end{theorem}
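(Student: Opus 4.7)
The plan is to obtain this statement as a direct corollary of Theorem \ref{globeinb}, with the only additional input being Günther's static isometric embedding theorem \cite[Folgerung 1.3]{gunther1989einbettungssatz}. Since the one hypothesis distinguishing Theorem \ref{globeinb} from the present statement is the existence of an initial free isometric embedding $F_0$ of $(M, g(\cdot, 0))$, the task reduces to producing such an $F_0$ in the ambient dimension $q(n) = \max\{n(n+5)/2, n(n+3)/2 + 5\}$ from the static theory alone.

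More precisely, I would first apply \cite[Folgerung 1.3]{gunther1989einbettungssatz} to the Riemannian manifold $(M, g(\cdot, 0))$, obtaining a free isometric embedding $F_0 \in C^{\infty}(M, \mathbb{R}^{q(n)})$ with $F_0^{\ast}(\delta) = g(\cdot, 0)$. Here the $n(n+5)/2$ contribution to $q(n)$ is present precisely to guarantee the existence of some free embedding of the smooth closed manifold $M$ (cf.\ \cite[Theorem 2.1]{andrews2002notes}), while the $n(n+3)/2 + 5$ contribution arises from the local modification technique underlying Theorem \ref{globeinb}. In particular, the dimension bound $q(n) \geq n(n+3)/2 + 5$ needed by Theorem \ref{globeinb} holds by the very definition of $q(n)$.

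Next, I would invoke Theorem \ref{globeinb} with this $F_0$, the given smooth family $(g(\cdot, t))_{t \in [0, T]}$, and the ambient dimension $q = q(n)$. Its conclusion directly provides the desired $T^{\ast} \in (0, T]$ together with a smooth family $F \in C^{\infty}(M \times [0, T^{\ast}], \mathbb{R}^{q(n)})$ of free embeddings satisfying $F(\cdot, 0) = F_0$ and $F(\cdot, t)^{\ast}(\delta) = g(\cdot, t)$ for every $t \in [0, T^{\ast}]$. There is no genuine obstacle in this final step: the entire analytical content has already been carried out in Theorem \ref{globeinb}, which in turn rests on the time-dependent stability analysis of the local perturbation method developed in Section \ref{stab_per}. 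The present statement is therefore the cleanest formulation of the main result of the paper, asserting that any smooth evolution of Riemannian metrics on $M$ admits, for a short time, a smooth realization as a family of free isometric embeddings into a single fixed Euclidean space of dimension $q(n)$.
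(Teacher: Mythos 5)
Your proposal is correct and coincides with the paper's own route: the paper derives this theorem as an immediate consequence of Theorem \ref{globeinb} once a free isometric embedding $F_0$ of $(M,g(\cdot,0))$ into $\mathbb{R}^{q(n)}$ is supplied by \cite[Folgerung 1.3]{gunther1989einbettungssatz}. There is nothing substantive to add.
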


An application of this result is the statement that \textit{geometric evolution equations} like
the \textit{Ricci flow on closed manifolds}, i.e.
\begin{equation*}
\begin{cases}
\frac{\partial}{\partial t}g(t)&=-2 Ric_{g(t)} \\
 g(0)&=g_0
\end{cases}
\end{equation*} 
where $g_0$ is a smooth metric on a smooth closed Riemannian manifold $M^n$, is in general ``visualisable'' 
in an Euclidean space of space dimension $N(n)$ where $N(n)$ has quadratic order with respect to the
dimension of the manifold $n$.

In this connection we want to mention the works \cite{rubinstein2005visualizing} and \cite{1311.0289}
which are concerned with the isometric embedding of the Ricci flow on surfaces of revolution.

Furthermore, we want to refer to the works \cite{andrews2002notes}, \cite{hong2006isometric}, \cite{de2016masterpieces} and \cite{gromov2017geometric} which address developments in the field of \textit{isometric embeddings of Riemannian manifolds}. But
we want to point out that each of these works has a special focus. It should also be mentioned that \cite[Chapter 1]{hong2006isometric}
contains further details of M. G\"unther's proof of the \textit{isometric embedding problem}. Worthy of particular mention is the work \cite{de2016masterpieces} in which the author
focuses attention on John Nash's contributions to mathematics. We also want to mention Deane Yang's work \cite{math/9807169} which is an
 ``informal expository note'' describing M. G\"unther's approach to the \textit{isometric embedding problem}.

This work is a part of the author's diploma thesis (\cite{1607.04259}), written under the supervision of Miles Simon at the Otto-von-Guericke-Universit\"at Magdeburg.

\section{G\"unther's solution to the local perturbation problem}\label{sol_per}

We consider the following \textit{local perturbation problem}: Given two open sets $U_1, U_2\subset B_1(0)$ where $\overline{U}_1\subset U_2$ and $\overline{U}_2 \subset B_1(0)$, a \textit{free immersion} $F_0\in C^{\infty}(\overline{B_1(0)},\mathbb{R}^q)$ (cf. Definition \ref{freemap}) as well as a \textit{perturbation} $f\in C^{\infty}(\overline{B_1(0)},\mathbb{R}^{\frac{n}{2}(n+1)})$ with $supp(f)\subset U_1$. 

\textit{Under which condition there exists a function $u\in C^{\infty}(\overline{B_1(0)},\mathbb{R}^q)$ with $supp(u)\subset U_2$ that solves the equation:
\begin{equation*}
\partial_i(F_0+u)\cdot \partial_j (F_0+u) =\partial_i F_0\cdot \partial_j F_0+f_{ij}
\end{equation*}
 for all $1\leq i\leq j\leq n$?}

We outline the method from \cite[Section 5]{gunther1989einbettungssatz} to show that such a ``modification function'' $u$ always exists, provided that $f$ is ``small enough'' in the $C^{2,\alpha}$-sense. 

This method is the foundation for the proof of Theorem \ref{globeinb} in Section \ref{stab_per}. 
The main purpose of this section is to recall the key ideas in \cite[Section 5]{gunther1989einbettungssatz}, especially
the \textit{formulation of the fixed-point problem} in Section \ref{sec:5.2},  
and to establish appropriate notation, which is adapted to the \textit{time-dependent situation} in Section \ref{stab_per} (cf. the ``fixed-point operator'' in \eqref{eq:5.18} for instance). 

As already mentioned, such called \textit{free immersions} play an important role in our context:
\begin{defi}\label{freemap}
Let $M$ be a smooth $n$-dimensional manifold. A mapping $F_0\in C^{\infty}(M,\mathbb{R}^q)$ is said to be a \textit{free immersion} if for every $x\in M$ and
every smooth chart $\varphi: U\longrightarrow V$ with $U\ni x$ the following set of $n(n+3)/2$ vectors
\begin{equation*}
\{\partial_i (F_0\circ \varphi^{-1})(\varphi(x))\}_{1\leq i \leq n}\cup \{\partial_i \partial_j(F_0\circ \varphi^{-1})(\varphi(x))\}_{1\leq i\leq j\leq n}
\end{equation*}
is linearly independent.
\end{defi}

In this context we refer to \cite[Section 1.2 and 1.3]{andrews2002notes} for a motivation for these sort of immersions.
It is worth mentioning that the $n$-dimensional sphere $\mathbb{S}^n$ has a free embedding into the Euclidean space of dimension $n(n+3)/2$ which
is obviously the smallest possible dimension (cf. \cite[Appendix 5]{gromov1970embeddings}).
In general, as already mentioned, 
each smooth closed Riemannian manifold possesses a smooth free embedding into the Euclidean space of dimension $n(n+5)/2$ (cf. \cite[Theorem 2.1]{andrews2002notes}).

The key idea in M. G\"unther's approach is to convert the \textit{local perturbation problem} into a \textit{fixed-point problem} of an appropriate operator. In
order to do this, he introduces some operators \cite[Section 5]{gunther1989einbettungssatz} which are ``parts'' of the ``fixed-point operator''
(cf. \eqref{eq:5.18}). We point out that we don't give detailed a priori arguments that motivate the choice of these operators, we just write down 
necessary information to prepare the arguments within the \textit{time-dependent situation} in Section \ref{stab_per}. Whenever any statement is proven in \cite[Section 5]{gunther1989einbettungssatz} we give a reference. There are some non-standard arguments that does not appear in \cite[Section 5]{gunther1989einbettungssatz}. We prove these statements 
as a part of the generalized, time-dependent situation in Section \ref{stab_per} and we give forward references to these arguments.



\subsection{Definition of auxiliary operators}
Throughout let $\alpha\in (0,1)$ be a fixed H\"older exponent. Let $a\in C^{\infty}_0(B_1(0))$ be fixed, then for all $1\leq i\leq n$  we define the operator (cf. \cite[(5.7)]{gunther1989einbettungssatz}):
\begin{align}\label{eq:5.2}
\begin{split}
N_i[a]: C^{2,\alpha}(B_1(0),\mathbb{R}^q)&\longrightarrow C^{0,\alpha}(B_1(0))\\
N_i[a](v)&:= 2\partial_i a \Delta v\cdot v+a\Delta v\cdot\partial_i v
\end{split}
\end{align}
If  $f\in C^{0,\alpha}(B_1(0))$ is a function, then $\Delta^{-1}f\in C^{2,\alpha}(B_1(0))$ is  \textit{the unique solution to Poisson's equation with trivial Dirichlet boundary data},
i.e
\begin{equation*}
\begin{cases}
\Delta u=f   &\text{on }B_1(0)\\
\hphantom{\Delta} u=0   &\text{on }\partial B_1(0)
\end{cases}
\end{equation*}

If $v\in C^{3,\alpha}(B_1(0),\mathbb{R}^q)$, then for all $1\leq i\leq j\leq n$ we define functions
$u^{(1)}_{ij}[a](v),u^{(2)}_{ij}[a](v)\in C^{2,\alpha}(B_1(0),\mathbb{R}^q)$ as follows
\begin{align*}
u^{(1)}_{ij}[a](v)&:= a\partial_i\Delta^{-1} N_j[a](v)+a\partial_j\Delta^{-1} N_i[a](v)+3\partial_i a \Delta^{-1} N_j[a](v)+3\partial_j a \Delta^{-1} N_i[a](v)\\
u^{(2)}_{ij}[a](v)&:= 4\partial_i a \partial_j a\,  v\cdot v+2a \partial_i a\, \partial_j v\cdot v+2a \partial_j a\, \partial_i v\cdot v+a^2\, \partial_i v\cdot\partial_j v
\end{align*} 
The following Lemmas are concerned with the formal computation of $\Delta u_{ij}^{(l)}[a](v)$:

\begin{lem}\label{lem:5.1}(cf. \cite[(5.8)]{gunther1989einbettungssatz})
Let $a\in C^{\infty}_0(B_1(0))$ be fixed and let $1\leq i\leq j\leq n$. Then there exists an operator:
\begin{equation*}
L_{ij}[a] : C^{2,\alpha}(B_1(0),\mathbb{R}^q)\longrightarrow C^{0,\alpha}(B_1(0))
\end{equation*}
that has the shape:
\begin{align*}
L_{ij}[a](v)=&\sum_{l\in\{i,j\}}{\sum_{\substack{s_1,s_2\in\mathbb{N}^n\\ |s_1|+|s_2|=3,\ |s_2|\leq 2}}{C_{ij}(s_1,s_2)\cdot\partial^{s_1} a\cdot\partial^{s_2}}(\Delta^{-1}N_l[a](v))}\\
\end{align*}
so that for all $v\in C^{3,\alpha}(B_1(0),\mathbb{R}^q)$ the equation
\begin{align*}
&\Delta u_{ij}^{(1)}[a](v)=a\,\partial_i N_j[a](v)+a \,\partial_j N_i[a](v)-L_{ij}[a](v)
\end{align*}
is satisfied.
\end{lem}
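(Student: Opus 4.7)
The plan is a direct, termwise computation of $\Delta u^{(1)}_{ij}[a](v)$ via the Leibniz formula $\Delta(fg) = (\Delta f)g + 2\nabla f\cdot\nabla g + f\,\Delta g$, followed by isolating the two ``good'' summands $a\,\partial_i N_j[a](v)$ and $a\,\partial_j N_i[a](v)$ and collecting everything else, with the opposite sign, as $L_{ij}[a](v)$. One then checks that this remainder indeed has the claimed structure and defines a continuous operator $C^{2,\alpha}\to C^{0,\alpha}$.

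For the setup I would abbreviate $w_l := \Delta^{-1} N_l[a](v)$ for $l\in\{i,j\}$, so that $\Delta w_l = N_l[a](v)$ and
\begin{equation*}
u^{(1)}_{ij}[a](v) \;=\; a\,\partial_i w_j + a\,\partial_j w_i + 3(\partial_i a)\,w_j + 3(\partial_j a)\,w_i.
\end{equation*}
Because $v\in C^{3,\alpha}$, inspection of $N_l[a](v) = 2(\partial_l a)\Delta v\cdot v + a\,\Delta v\cdot\partial_l v$ yields $N_l[a](v)\in C^{1,\alpha}$, hence by Schauder regularity $w_l\in C^{3,\alpha}$. This smoothness legitimises the identity $\Delta\,\partial_i w_j = \partial_i\Delta w_j = \partial_i N_j[a](v)$ that produces the ``good'' terms.

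The Leibniz rule applied to $a\,\partial_i w_j$ gives $(\Delta a)\partial_i w_j + 2\nabla a\cdot\nabla\partial_i w_j + a\,\partial_i N_j[a](v)$, and symmetrically for $a\,\partial_j w_i$; the third summand is precisely the desired ``good'' contribution. Applied to $3(\partial_i a)w_j$ the Leibniz rule yields $3\partial_i(\Delta a)\,w_j + 6\nabla\partial_i a\cdot\nabla w_j + 3(\partial_i a)N_j[a](v)$, none of which is ``good''; symmetrically for $3(\partial_j a)w_i$. Summing these four expansions, the contributions that are not of the form $a\,\partial_l N_l[a](v)$ are collected with a minus sign as $L_{ij}[a](v)$. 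To match the prescribed schema, the terms carrying the factor $N_{l'}[a](v)$ are rewritten as $(\partial_l a)\,\Delta w_{l'}$, which fits $\partial^{s_1}a\cdot\partial^{s_2}w_{l'}$ with $|s_1|=1$, $|s_2|=2$. A short check then confirms that every surviving term has $|s_1|+|s_2|=3$ with $|s_2|\le 2$ and is indexed by $l\in\{i,j\}$, so it can be written in the announced form with explicit combinatorial constants $C_{ij}(s_1,s_2)$.

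Finally, the operator $L_{ij}[a]$ defined by that explicit formula extends to $v\in C^{2,\alpha}$: in this lower regularity one still has $N_l[a](v)\in C^{0,\alpha}$, hence $w_l\in C^{2,\alpha}$ by Schauder theory, so each summand $\partial^{s_1}a\cdot\partial^{s_2}w_l$ lies in $C^{0,\alpha}$ because $a\in C^{\infty}_0(B_1(0))$. Thus $L_{ij}[a]:C^{2,\alpha}(B_1(0),\mathbb{R}^q)\to C^{0,\alpha}(B_1(0))$ is well-defined, and the equation for $\Delta u^{(1)}_{ij}[a](v)$ holds under the stronger hypothesis $v\in C^{3,\alpha}$ used in the computation. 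The only real obstacle is bookkeeping: one must be careful to group all remainder terms into the unique multi-index form required by the statement, and to rewrite factors of $N_l[a](v)$ as second derivatives of $w_l$ so that the schema $\partial^{s_2}(\Delta^{-1}N_l[a](v))$ is respected throughout.
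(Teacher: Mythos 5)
Your proposal is correct: expanding $\Delta u^{(1)}_{ij}[a](v)$ termwise via $\Delta(fg)=(\Delta f)g+2\nabla f\cdot\nabla g+f\Delta g$, commuting $\Delta$ past $\partial_i$ on $w_l:=\Delta^{-1}N_l[a](v)$ (using $w_l\in C^{3,\alpha}$ when $v\in C^{3,\alpha}$), isolating $a\,\partial_i N_j[a](v)+a\,\partial_j N_i[a](v)$, and rewriting the leftover $(\partial_l a)N_{l'}[a](v)$ factors as $(\partial_l a)\Delta w_{l'}$ so every remainder summand has the form $\partial^{s_1}a\,\partial^{s_2}w_l$ with $|s_1|+|s_2|=3$, $|s_2|\le2$, is exactly the intended computation. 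The paper does not display a proof of this lemma (it only cites G\"unther's (5.8)), so there is nothing to compare against beyond noting that your direct Leibniz bookkeeping is the standard and essentially unique way to derive the statement; the only tiny slip is writing the good terms as ``$a\,\partial_l N_l[a](v)$'' where the indices should be crossed ($a\,\partial_i N_j$ and $a\,\partial_j N_i$), which is clearly what you meant.
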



\begin{lem}\label{lem:5.2}(cf. \cite[(5.9)]{gunther1989einbettungssatz})
Let $a\in C^{\infty}_0(B_1(0))$ be fixed and let $1\leq i\leq j\leq n$. Then there exists an operator:
\begin{equation*}
R_{ij}[a] : C^{2,\alpha}(B_1(0),\mathbb{R}^q)\longrightarrow C^{0,\alpha}(B_1(0))
\end{equation*}
that has the shape
\begin{equation*}
R_{ij}[a](v)=\sum_{\substack{s_1,s_2,s_3,s_4\in\mathbb{N}^n\\|s_1|+|s_2|+|s_3|+|s_4|=4,\ |s_3|,|s_4|\leq 2}}{C_{ij}(s_1,s_2,s_3,s_4)\cdot\partial^{s_1} a\ \partial^{s_2} a\ \partial^{s_3} v \cdot \partial^{s_4} v}
\end{equation*}
so that for all $v\in C^{3,\alpha}(B_1(0),\mathbb{R}^q)$ the equation:
\begin{align*}
\Delta u^{(2)}_{ij}[a](v)=a\partial_i N_j[a](v)+a\partial_j N_i[a](v)+R_{ij}[a](v)
\end{align*}
is satisfied.
\end{lem}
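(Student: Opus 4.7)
The plan is to prove the identity by computing $\Delta u^{(2)}_{ij}[a](v)$ directly via the Leibniz rule for the Laplacian, and then showing that the ``dangerous'' contributions, namely those that contain a third derivative of $v$ on a single factor, assemble themselves precisely into $a\partial_i N_j[a](v)+a\partial_j N_i[a](v)$. The leftover will, by construction, have at most second derivatives on each $v$-factor and will yield $R_{ij}[a](v)$ in the asserted form.

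Concretely, I would apply the identity $\Delta(FG)=(\Delta F)\,G+2\nabla F\cdot\nabla G+F\,\Delta G$ to each of the four summands of $u^{(2)}_{ij}[a](v)$. The first summand $4\partial_ia\,\partial_ja\,v\cdot v$ produces no third-order-in-$v$ content because $\Delta(v\cdot v)=2\Delta v\cdot v+2|\nabla v|^2$. The cross terms $2a\partial_ja\,\partial_iv\cdot v$ and $2a\partial_ia\,\partial_jv\cdot v$ contribute dangerous parts only through $2a\partial_ja\,\Delta(\partial_iv\cdot v)$ and $2a\partial_ia\,\Delta(\partial_jv\cdot v)$, whose third-order content is $2a\partial_ja\,\partial_i\Delta v\cdot v$ and $2a\partial_ia\,\partial_j\Delta v\cdot v$. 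Finally, $a^2\,\partial_iv\cdot\partial_jv$ contributes through $a^2\Delta(\partial_iv\cdot\partial_jv)$ the third-order terms $a^2\partial_i\Delta v\cdot\partial_jv+a^2\partial_j\Delta v\cdot\partial_iv$ (plus the second-order cross term $2a^2\partial_k\partial_iv\cdot\partial_k\partial_jv$, which goes into $R_{ij}$).

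A parallel expansion of $a\partial_iN_j[a](v)+a\partial_jN_i[a](v)$ using \eqref{eq:5.2} yields, as its third-order-in-$v$ content, exactly
\begin{equation*}
2a\partial_ja\,\partial_i\Delta v\cdot v+a^2\partial_i\Delta v\cdot\partial_jv+2a\partial_ia\,\partial_j\Delta v\cdot v+a^2\partial_j\Delta v\cdot\partial_iv,
\end{equation*}
which matches the expression extracted above. Consequently, the difference $\Delta u^{(2)}_{ij}[a](v)-a\partial_iN_j[a](v)-a\partial_jN_i[a](v)$ is a finite sum of products of derivatives of $a$ and of $v$ in which no $v$-factor carries more than two derivatives; since every original summand carries exactly two derivatives in total and $\Delta$ adds two more, each product has total derivative-degree four. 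Collecting like terms gives $R_{ij}[a](v)$ in the claimed structural form, and the mapping property $R_{ij}[a]\colon C^{2,\alpha}(B_1(0),\mathbb{R}^q)\to C^{0,\alpha}(B_1(0))$ follows from $a\in C^\infty_0(B_1(0))$ together with the H\"older algebra property of $C^{0,\alpha}$.

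The main obstacle is purely combinatorial bookkeeping: one has to check, coefficient by coefficient, that every third-order-in-$v$ term produced by $\Delta u^{(2)}_{ij}$ is matched by a corresponding term in $a\partial_iN_j+a\partial_jN_i$. The precise numerical coefficients $4$, $2$, $2$, $1$ in the definition of $u^{(2)}_{ij}[a](v)$ are chosen exactly so that this cancellation is exact, so the verification requires careful tracking of factors of two that arise from $2\nabla F\cdot\nabla G$ and from the symmetric dot-product derivative $\partial_k(X\cdot Y)=\partial_kX\cdot Y+X\cdot\partial_kY$.
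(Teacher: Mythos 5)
Your proof is correct, and it is essentially the only natural approach. The paper itself does not prove this lemma at all --- it merely cites G\"unther's $(5.9)$ --- so there is no internal proof to compare against; the direct Leibniz expansion you propose is exactly the computation that G\"unther's coefficients $4,2,2,1$ in $u^{(2)}_{ij}[a](v)$ are engineered to make work. I checked the bookkeeping: the third-order-in-$v$ content of $\Delta u^{(2)}_{ij}[a](v)$ is
\begin{equation*}
2a\,\partial_ja\,\partial_i\Delta v\cdot v+2a\,\partial_ia\,\partial_j\Delta v\cdot v+a^2\,\partial_i\Delta v\cdot\partial_jv+a^2\,\partial_j\Delta v\cdot\partial_iv,
\end{equation*}
and expanding $a\partial_iN_j[a](v)+a\partial_jN_i[a](v)$ from \eqref{eq:5.2} produces precisely these four terms with the same coefficients, so the leftover is a sum of products carrying total derivative count four with at most second-order derivatives on each $v$-factor, which is the asserted form of $R_{ij}[a](v)$.
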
 

Now, for all $1\leq i\leq j\leq n$ we define: 
\begin{equation*}
u_{ij}[a](v):= u_{ij}^{(2)}[a](v)-u_{ij}^{(1)}[a](v)
\end{equation*}
Then, using Lemma \ref{lem:5.1} und Lemma \ref{lem:5.2} we obtain
\begin{align*}
\begin{cases}
\Delta u_{ij}[a](v)=M_{ij}[a](v) &\text{on }B_1(0)\\
\hphantom{\Delta} u_{ij}[a](v)=0 &\text{on }\partial B_1(0)
\end{cases}
\end{align*}
where
\begin{align}\label{eq:5.11}
\begin{split}
M_{ij} : C^{2,\alpha}(B_1(0),\mathbb{R}^q)&\longrightarrow C^{0,\alpha}(B_1(0))\\
M_{ij}[a](v)&:= L_{ij}[a](v)+R_{ij}[a](v)
\end{split}
\end{align}
The following Lemma contains the \textit{crucial idea} in order to solve the \textit{local perturbation problem}:

\begin{lem}\label{lem:5.3}(cf. \cite[Lemma 5.3]{gunther1989einbettungssatz})
Suppose that $F_0 \in C^{2,\alpha}(B_1(0),\mathbb{R}^q)$, $a\in C^{\infty}_0(B_1(0)),$  $v \in C^{3,\alpha}(B_1(0),\mathbb{R}^q)$ and $f \in C^{2,\alpha}(B_1(0),\mathbb{R}^{\frac{n}{2}(n+1)})$ satisfy
\begin{align}
\label{eq:5.7}
\partial_i F_0\cdot v&=-a\Delta^{-1}N_i[a](v)  & &\text{if } 1\leq i\leq n\\
\label{eq:5.8}
\partial_i \partial_j F_0\cdot v&=-\frac{1}{2}f_{ij}+\frac{1}{2}\Delta^{-1}M_{ij}[a](v) & &\text{if } 1\leq i\leq  j\leq n
\end{align}
on $B_1(0)$. Then the function $F:= F_0+a^2 v$ satisfies
\begin{equation}\label{eq:5.17}
\partial_i F\cdot \partial_j F=\partial_i F_0\cdot \partial_j F_0+a^2 f_{ij}
\end{equation}
for all $1\leq i\leq j\leq n$ on $B_1(0)$.
\end{lem}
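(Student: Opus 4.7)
The plan is to substitute $F = F_0 + a^2 v$ directly into $\partial_i F \cdot \partial_j F$ and rearrange the resulting terms so that they reassemble into the quantities $u_{ij}^{(1)}[a](v)$, $u_{ij}^{(2)}[a](v)$ and the Laplacian identity $\Delta u_{ij}[a](v) = M_{ij}[a](v)$ coming from Lemmas \ref{lem:5.1} and \ref{lem:5.2}. The hypotheses \eqref{eq:5.7} and \eqref{eq:5.8} are then used to turn these algebraic identities into the desired equality \eqref{eq:5.17}.

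Concretely, I would write $\partial_i F = \partial_i F_0 + \partial_i(a^2 v)$ and split
\begin{equation*}
\partial_i F \cdot \partial_j F - \partial_i F_0 \cdot \partial_j F_0 = \underbrace{\bigl(\partial_i F_0 \cdot \partial_j(a^2 v) + \partial_j F_0 \cdot \partial_i(a^2 v)\bigr)}_{=:A_{ij}} + \underbrace{\partial_i(a^2 v) \cdot \partial_j(a^2 v)}_{=:B_{ij}}.
\end{equation*}
A short computation shows $B_{ij} = a^2\, u_{ij}^{(2)}[a](v)$, directly from the definition of $u_{ij}^{(2)}$. For $A_{ij}$, the Leibniz rule gives $\partial_i F_0 \cdot \partial_j v = \partial_j(\partial_i F_0 \cdot v) - \partial_i \partial_j F_0 \cdot v$, and substituting \eqref{eq:5.7} inside the $\partial_j$-derivative produces terms of the form $\partial_j a\,\Delta^{-1} N_i[a](v)$ and $a\,\partial_j \Delta^{-1} N_i[a](v)$. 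After collecting the coefficients one obtains
\begin{equation*}
A_{ij} = -a^2\, u_{ij}^{(1)}[a](v) - 2a^2\,\partial_i \partial_j F_0 \cdot v,
\end{equation*}
where the bookkeeping of the coefficients $3$ and $1$ matches exactly the definition of $u_{ij}^{(1)}[a](v)$.

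Finally I apply \eqref{eq:5.8} to the remaining term, which gives $-2a^2\,\partial_i \partial_j F_0 \cdot v = a^2 f_{ij} - a^2 \Delta^{-1} M_{ij}[a](v)$. Collecting everything,
\begin{equation*}
\partial_i F \cdot \partial_j F - \partial_i F_0 \cdot \partial_j F_0 = a^2 f_{ij} + a^2\bigl(u_{ij}^{(2)}[a](v) - u_{ij}^{(1)}[a](v)\bigr) - a^2\Delta^{-1} M_{ij}[a](v).
\end{equation*}
Since $u_{ij}[a](v) := u_{ij}^{(2)}[a](v) - u_{ij}^{(1)}[a](v)$ vanishes on $\partial B_1(0)$ (because $u_{ij}^{(1)}$ and $u_{ij}^{(2)}$ are built from $a\in C^\infty_0(B_1(0))$ and from $\Delta^{-1}$ with zero boundary data), and satisfies $\Delta u_{ij}[a](v) = M_{ij}[a](v)$ by Lemmas \ref{lem:5.1} and \ref{lem:5.2}, uniqueness for the Dirichlet problem yields $u_{ij}[a](v) = \Delta^{-1} M_{ij}[a](v)$. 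The last two terms thus cancel and \eqref{eq:5.17} follows.

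The argument is essentially a carefully tuned algebraic identity, so the only real obstacle is bookkeeping: one has to track the powers of $a$ and the combinatorial coefficients so that the expressions $3\,\partial a\,\Delta^{-1} N$ and $a\,\partial \Delta^{-1} N$ inside $u_{ij}^{(1)}[a](v)$ appear with the correct sign. The operators $u_{ij}^{(1)}[a](v)$ and $u_{ij}^{(2)}[a](v)$ have been designed precisely so that this cancellation happens automatically as soon as \eqref{eq:5.7} and \eqref{eq:5.8} are imposed.
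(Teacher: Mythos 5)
Your proof is correct, and the computations check out. The paper itself does not prove this lemma; it only cites G\"unther's original article, so there is no internal proof to compare against. Your approach is the natural one: expand $\partial_i F\cdot\partial_j F-\partial_i F_0\cdot\partial_j F_0$ into the linear piece $A_{ij}$ and the quadratic piece $B_{ij}$; observe that $B_{ij}=a^2u^{(2)}_{ij}[a](v)$ directly from the definition; use the Leibniz identity $\partial_i F_0\cdot\partial_j v=\partial_j(\partial_i F_0\cdot v)-\partial_i\partial_j F_0\cdot v$ together with \eqref{eq:5.7} to rewrite $A_{ij}=-a^2u^{(1)}_{ij}[a](v)-2a^2\,\partial_i\partial_j F_0\cdot v$ (I verified the coefficients $3$ and $1$); substitute \eqref{eq:5.8}; and then invoke the Dirichlet problem $\Delta u_{ij}[a](v)=M_{ij}[a](v)$ with $u_{ij}[a](v)|_{\partial B_1(0)}=0$ (which the paper derives from Lemmas \ref{lem:5.1} and \ref{lem:5.2}, together with the compact support of $a$ and the zero boundary data of $\Delta^{-1}$) to conclude $u_{ij}[a](v)=\Delta^{-1}M_{ij}[a](v)$ by uniqueness, whence the last two terms cancel. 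This is precisely the cancellation the operators $u^{(1)}_{ij},u^{(2)}_{ij},N_i,M_{ij}$ were designed to produce, so this is almost certainly the argument G\"unther himself gives.
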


\subsection{Formulation of the fixed-point problem}
\label{sec:5.2}
In order to solve the \textit{local perturbation problem} we examine whether the system of equations in Lemma \ref{lem:5.3} is solvable.
As already mentioned, it is our aim to convert this system into a \textit{fixed-point equation}. In order to do this, we introduce some operators.

Let $F_0\in C^{\infty}(\overline{B_1(0)},\mathbb{R}^q)$ be a free immersion then $A[F_0]\in C^{\infty}(\overline{B_1(0)},\mathbb{R}^{\frac{n}{2}(n+3)\times q})$ is defined to be equal to the matrix valued function whose first $n$ rows are the functions $\partial_i F_0^{\top}$ where $1\leq i\leq n$ and
the remaining rows are the functions $\partial_i\partial_j F_0^{\top}$ where $1\leq i\leq j\leq n$
in lexicographic order.

If $\{h_i \}_{1\leq i\leq n}\subset C(\overline{B_1(0)})$ and $\{f_{ij} \}_{1\leq i\leq j\leq n}\subset C(\overline{B_1(0)})$ are
sets of functions, the functions $h\in C(\overline{B_1(0)},\mathbb{R}^n)$ and $f\in C(\overline{B_1(0)},\mathbb{R}^{\frac{n}{2}(n+1)})$ are
the (column-) vector valued functions that have the analogous functions as component functions. Again, we are using the lexicographic order 
if we have two indices. Furthermore, the function $\Theta[F_0]\in C^{\infty}(\overline{B_1(0)},\mathbb{R}^{q\times \frac{n}{2}(n+3)})$ is defined as
\begin{align}\label{eq:5.41}
\Theta[F_0](x)=A^{\top}[F_0](x)\cdot( A[F_0](x)  A^{\top}[F_0](x))^{-1}
\end{align}
so that the linear operator
\begin{align}\label{eq:5.19}
\begin{split}
E[F_0]:  C^{m,\alpha}(B_1(0),\mathbb{R}^n) &\times C^{m,\alpha}(B_1(0),\mathbb{R}^{\frac{n}{2}(n+1)})\longrightarrow C^{m,\alpha}(B_1(0),\mathbb{R}^q) \\  
E[F_0](h,f)(x)&:=\Theta[F_0](x)\cdot\begin{pmatrix} h(x)\\ f(x) \end{pmatrix}
\end{split}
\end{align}
is well-defined. By definition we have for all $x\in B_1(0)$ the equality
\begin{equation*}
 A[F_0](x)\cdot \Theta[F_0](x)=I
\end{equation*}
where $I\in\mathbb{R}^{\frac{n}{2}(n+3)\times\frac{n}{2}(n+3)}$ is the identity matrix. This equality implies
\begin{equation*}
  \begin{alignedat}{2}
    \partial_i F_0(x)\cdot E[F_0](h,f)(x)
    &=h_i(x) 
    && \hspace{1cm}\text{if }1\leq i\leq n\\
		\partial_i\partial_j F_0(x)\cdot E[F_0](h,f)(x)
    & =f_{ij}(x)
    && \hspace{1cm}\text{if }1\leq i\leq j\leq n
  \end{alignedat}
\end{equation*}
Keeping equations $\eqref{eq:5.7}$ and $\eqref{eq:5.8}$ in mind it is reasonable to introduce the following operators
\begin{align}
\begin{split}\label{eq:5.14}
 P[a]: C^{m,\alpha}(B_1(0),\mathbb{R}^q)&\longrightarrow C^{m,\alpha}(B_1(0),\mathbb{R}^n)  \\
(P_i[a](v))_{1\leq i\leq n}&:=(a\Delta^{-1} N_i[a](v))_{1\leq i\leq n}
\end{split}
\intertext{and}
\begin{split}\label{eq:5.15}
 Q[a]: C^{m,\alpha}(B_1(0),\mathbb{R}^q)&\longrightarrow C^{m,\alpha}(B_1(0),\mathbb{R}^{\frac{n}{2}(n+1)})\\
(Q_{ij}[a](v))_{1\leq i\leq j\leq n}&:=(\Delta^{-1} M_{ij}[a](v))_{1\leq i\leq j\leq n}
\end{split}
\end{align}
where $a\in C^{\infty}_0(B_1(0))$, $m\geq 2$ and the operators $N_i$ and $M_{ij}$ are defined in \eqref{eq:5.2} and \eqref{eq:5.11}. Finally we define the operator:
\begin{align}\label{eq:5.18}
\begin{split}
\Phi[F_0,a,f]: C^{m,\alpha}(B_1(0),\mathbb{R}^q)&\longrightarrow C^{m,\alpha}(B_1(0),\mathbb{R}^q)\\
\Phi[F_0,a,f](v)&:=-E[F_0]\left(P[a](v), \frac{1}{2}f- \frac{1}{2} Q[a](v)\right)
\end{split}
\end{align}
where $f\in C^{m,\alpha}(B_1(0),\mathbb{R}^{\frac{n}{2}(n+1)})$ and $m\geq 2$. Hence, the system in Lemma \ref{lem:5.3} is solved if the \textit{fixed-point equation}
\begin{equation}\label{eq:5.16}
v=\Phi[F_0,a,f](v)
\end{equation}
is solved.

As mentioned at the beginning of this section, we assume that $supp(f)\subset U_1$. Hence, we may choose a specific function
$a\in C^{\infty}_0(B_1(0))$ so that $f=a^2 f$. Then, the solution to the \textit{local perturbation problem} follows from \eqref{eq:5.17}.

It remains show that the operator in \eqref{eq:5.18} has a fixed-point. In particular, we are interested in an explicit method to construct such a fixed-point. In order to prove the existence,
we write down appropriate $C^{m,\alpha}$-estimates in Section \ref{sec:5.4}. By definition of the operators $P[a]$ and $Q[a]$, we notice that the Dirichlet boundary value problem for Poisson's equation is important here. In the following section we state some standard facts within the context of \textit{the theory
of elliptic differential equations}. 


\subsection{Dirichlet boundary value problem for Poisson's equation}

Let $f\in C^{0,\alpha}(B_1(0))$ and $\varphi\in C^{2,\alpha}(B_1(0))$ then there exists a unique $u\in C^{2,\alpha}(B_1(0))$ satisfying
\begin{align}\label{eq:5.21}
\begin{cases}
\Delta u=f &\text{on }B_1(0)\\
\hphantom{\Delta} u=\varphi &\text{on }\partial B_1(0)
\end{cases}
\end{align}
And we have the following estimate:
\begin{equation}\label{eq:5.20}
\left|u \right|_{C^{2,\alpha}}\leq C(\alpha)\cdot \left( \left|f \right|_{C^{0,\alpha}} + \left|\varphi \right|_{C^{2,\alpha}}\right)
\end{equation}
For our considerations, in particular for \textit{regularity analysis}, the following \textit{higher-order estimates} are important:
\begin{lem}(cf. \cite[(5.4)]{gunther1989einbettungssatz}) 
Given a function $f\in C^{m,\alpha}(B_1(0))$, then the unique solution $u\in {C^{m+2,\alpha}}(B_1(0))$ of \eqref{eq:5.21} with trivial boundary data satisfies
\begin{equation}\label{eq:5.22}
\left|u \right|_{C^{m+2,\alpha}}\leq C(m,\alpha)\cdot \left|f \right|_{C^{m,\alpha}}
\end{equation}
\end{lem}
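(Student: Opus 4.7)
The plan is to proceed by induction on $m$, with the base case $m=0$ given directly by \eqref{eq:5.20} applied with $\varphi=0$, yielding $|u|_{C^{2,\alpha}}\leq C(\alpha)|f|_{C^{0,\alpha}}$. Throughout the induction, existence and uniqueness of $u\in C^{m+2,\alpha}(B_1(0))$ are already guaranteed by standard elliptic regularity together with \eqref{eq:5.21}; only the quantitative bound is at issue.

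For the inductive step, assume the estimate holds for $m-1$ and take $f\in C^{m,\alpha}(B_1(0))$. I would split the ball using a smooth partition of unity subordinate to an open cover of $\overline{B_1(0)}$ consisting of one interior set $\Omega_0\Subset B_1(0)$ and finitely many boundary neighborhoods. On $\Omega_0$, I formally differentiate the equation to obtain $\Delta(\partial^\beta u)=\partial^\beta f$ for every multi-index $|\beta|\leq m$; interior Schauder estimates on a slightly larger compactly contained ball, combined with the inductive hypothesis to bound the lower-order part, control $|u|_{C^{m+2,\alpha}(\Omega_0)}$ by $|f|_{C^{m,\alpha}}$.

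For each boundary neighborhood I would flatten $\partial B_1(0)$ by a smooth diffeomorphism $\Psi$ and work on a half-ball $B^+$ with a transformed equation $a^{ij}(y)\partial_i\partial_j\tilde u+b^i(y)\partial_i\tilde u=\tilde f$ with smooth coefficients and $\tilde u=0$ on the flat face. Tangential differentiation preserves the zero boundary condition, so applying the inductive hypothesis to the tangentially differentiated equation controls all mixed tangential derivatives of $\tilde u$ up to order $m+2$. The remaining purely normal derivatives of order $\geq 2$ are recovered algebraically: solving the PDE for $\partial_n^2\tilde u$ and then differentiating repeatedly expresses each higher normal derivative in terms of $\tilde f$ and already-controlled tangential derivatives, all in $C^{\cdot,\alpha}$.

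The main obstacle is the boundary step, namely the careful bookkeeping required to ensure (i) that the diffeomorphism $\Psi$ perturbs the Laplacian only by smooth lower-order terms that the induction can absorb, and (ii) that the algebraic inversion recovering normal derivatives remains consistent with the H\"older norms at each order. Once this is established on each boundary chart, assembling the local bounds through the partition of unity and pulling back to $B_1(0)$ produces the global estimate \eqref{eq:5.22} with a constant $C(m,\alpha)$ depending only on $m$, $\alpha$, and the fixed smooth geometry of the unit ball.
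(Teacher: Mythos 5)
The paper itself gives no proof of this lemma; it is cited directly from \cite[(5.4)]{gunther1989einbettungssatz} and ultimately rests on standard global Schauder theory for the Dirichlet problem on a smooth bounded domain, so there is no ``paper proof'' to compare against and your argument must stand on its own. Your outline — interior Schauder estimates in the bulk, boundary flattening plus tangential differentiation near $\partial B_1(0)$, then algebraic recovery of normal derivatives from the equation, all assembled by a partition of unity — is indeed the textbook route (cf.\ Gilbarg--Trudinger, Chapter 6) and can be made to work.

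However, there is one genuine error in the way you set up the boundary step, and it is not just bookkeeping. It is false that the flattening diffeomorphism $\Psi$ ``perturbs the Laplacian only by smooth lower-order terms.'' Writing $\tilde u = u\circ\Psi$, the transformed equation has the form $a^{kl}(y)\,\partial_k\partial_l\tilde u + b^k(y)\,\partial_k\tilde u = \tilde f$ with $a^{kl}$ built from $D\Psi^{-1}(D\Psi^{-1})^{\top}$, and since no Euclidean isometry flattens the curved sphere, the principal coefficients $a^{kl}$ are genuinely variable (not $\delta^{kl}$) near the image of $\partial B_1(0)$. As a consequence the inductive hypothesis as you have stated it — for the exact operator $\Delta$ on $B_1(0)$ — cannot be applied to the tangentially differentiated, flattened equation: that equation is not a Poisson equation. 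The standard fix is to formulate the induction over the larger class of uniformly elliptic operators with $C^{\infty}$ coefficients on half-balls with zero Dirichlet data on the flat face; then differentiating tangentially produces an equation of the same class (with a commutator that is of lower order and absorbable), and the normal derivatives are recovered by solving $a^{nn}\partial_n^2\tilde u = \tilde f - \sum_{(k,l)\neq(n,n)} a^{kl}\partial_k\partial_l\tilde u - b^k\partial_k\tilde u$ and differentiating, using $a^{nn}\geq\lambda>0$. With the induction formulated in that generality, the rest of your plan closes correctly and yields \eqref{eq:5.22} with a constant depending only on $m$, $\alpha$, and the fixed geometry of $B_1(0)$.
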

If $supp(f)\subset B_1(0)$ then the estimate \eqref{eq:5.22} may be slightly improved:
\begin{lem}(cf. \cite[Lemma 5.1]{gunther1989einbettungssatz})
Let $f\in C^{m,\alpha}(B_1(0))$ where $m\geq 1$ and $supp(f)\subset B_R(0)$ then, the unique solution
$u\in C^{m+2,\alpha}(B_1(0))$ of \eqref{eq:5.21} with trivial boundary data satisfies:
\begin{equation}\label{eq:5.30}
\left|u \right|_{C^{m+2,\alpha}}\leq K(\alpha,R)\cdot \left|f \right|_{C^{m,\alpha}}+C(m,\alpha,R)\cdot \left|f \right|_{C^{m-1,\alpha}}
\end{equation} 
\end{lem}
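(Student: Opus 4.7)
The plan is to decompose the unique solution $u$ into a Newtonian-potential piece on all of $\mathbb{R}^n$ plus a harmonic correction, crucially using that $\mathrm{supp}(f)\subset B_R(0)$ with $R<1$. Extend $f$ by zero to $\tilde f\in C^{m,\alpha}(\mathbb{R}^n)$, set $u_1:=N\ast \tilde f$ where $N$ is the fundamental solution of $\Delta$ on $\mathbb{R}^n$, and let $u_2:=u-u_1$. Then $u_2$ is harmonic on $B_1(0)$ with boundary data $u_2|_{\partial B_1(0)}=-u_1|_{\partial B_1(0)}$, and the splitting $u=u_1+u_2$ cleanly separates the ``source'' part (whose top-order behaviour I want to control with an $m$-independent constant) from the ``boundary correction'' (which will feed into the lower-order $C^{m-1,\alpha}$-term of \eqref{eq:5.30}).

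For $u_1$ I would invoke that the second derivatives of $N$ define Calder\'on--Zygmund operators $T_{ij}$ which are bounded on $C^{0,\alpha}(\mathbb{R}^n)$ with operator norm depending only on $\alpha$. Since convolution commutes with differentiation, for any multi-index $\beta$ with $|\beta|=m+2$ one writes $\partial^\beta u_1=T_{ij}(\partial^\gamma \tilde f)$ with $|\gamma|=m$ and suitable indices $i,j$, which yields
\[
[\partial^\beta u_1]_{C^{0,\alpha}(\mathbb{R}^n)}\le K(\alpha)\,|f|_{C^{m,\alpha}};
\]
this is the decisive $m$-independent top-order bound. Derivatives of $u_1$ of order $\le m+1$ are handled by the explicit convolution with $N$ (or with a single derivative of $N$, using one integration by parts that exploits $\mathrm{supp}(f)\subset B_R(0)$); they produce a constant $C(m,\alpha,R)$ but only the $C^{m-1,\alpha}$-norm of $f$ enters.

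For the harmonic correction $u_2$ the point is that $u_1$ is real-analytic on $B_1(0)\setminus\overline{B_R(0)}$ since $\tilde f$ vanishes there; in particular $\mathrm{dist}(\partial B_1(0),\mathrm{supp}(\tilde f))\ge 1-R>0$, so differentiating under the integral sign gives $|u_1|_{C^{k}(\overline{B_1(0)}\setminus B_{R'}(0))}\le C(k,R)\,|f|_{L^\infty}$ for every $k$ and any $R<R'<1$. Standard boundary Schauder theory for $\Delta u_2=0$ with smooth boundary values then yields
\[
|u_2|_{C^{m+2,\alpha}(B_1(0))}\le C(m,\alpha,R)\,|f|_{C^{m-1,\alpha}},
\]
which contributes only to the lower-order term. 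Adding the three contributions produces \eqref{eq:5.30}. The main obstacle is the $m$-independence of the leading constant $K(\alpha,R)$: plain iterated Schauder produces a constant that blows up with $m$, so one must invoke the $C^{0,\alpha}$-boundedness of the second-derivative singular integral of the Newton kernel and verify that $m$-fold differentiation commutes cleanly through the convolution. The compact support of $f$ is precisely what reconciles this global $\mathbb{R}^n$-estimate with the Dirichlet boundary condition on $B_1(0)$.
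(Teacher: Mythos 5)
The paper itself gives no proof of this lemma: it is simply quoted from G\"unther's \emph{Lemma 5.1} as a known elliptic estimate, so there is no in-paper argument to compare against. Judged on its own merits, your strategy is the natural and standard one, and it is essentially correct. Decomposing $u=u_1+u_2$ with $u_1=N*\tilde f$ and $u_2$ the harmonic correction cleanly splits the $m$-independent top-order contribution from the $m$-dependent remainder; writing $\partial^{\beta}u_1=T_{ij}(\partial^{\gamma}\tilde f)$ with $|\gamma|=m$, $|\beta|=m+2$ and invoking Calder\'on--Zygmund boundedness on H\"older spaces is exactly the device that defeats the $m$-growth of an iterated Schauder constant; and the observation that $u_1$ is real-analytic on $\overline{B_1(0)}\setminus B_R(0)$ (so that the boundary data of $u_2$ are controlled by low-order norms of $f$, with a constant that may of course depend on $m$) correctly produces the $C(m,\alpha,R)\,|f|_{C^{m-1,\alpha}}$ term. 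You also correctly identified the heart of the matter: without exploiting the compact support, no $m$-independent leading constant is available.

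One point should be made explicit rather than implicit. The Calder\'on--Zygmund bound controls only the H\"older \emph{seminorm} $[\partial^{\beta}u_1]_{C^{0,\alpha}}$, but the norm $|u|_{C^{m+2,\alpha}}$ used in the paper also contains $\sup_{B_1(0)}|\partial^{\beta}u_1|$, and singular integral operators are not bounded on $L^{\infty}$. This sup-bound is again salvaged by the compact support: for the Newtonian potential of a H\"older function supported in $B_R(0)$ the classical potential estimates (Gilbarg--Trudinger, Lemma 4.4/Theorem 4.6, or equivalently a pointwise anchoring of $\partial^{\beta}u_1$ via its decay at infinity) give $\sup|\partial^{\beta}u_1|\le K(n,\alpha,R)\,|\partial^{\gamma}\tilde f|_{C^{0,\alpha}}$ with a constant independent of $m$, so the $L^{\infty}$ part is safely absorbed into the $K(\alpha,R)\,|f|_{C^{m,\alpha}}$ term. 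Adding that sentence closes the only gap; the rest of the argument is sound.
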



\subsection{H\"older-estimates of the auxiliary operators}
\label{sec:5.4}

Using the elliptic estimates from the previous section one can show the following continuity- and regularity estimates for
the operators $N_i[a]$ and $M_{ij}[a]$. 
We mention that the proofs of Lemma \ref{lem:7.2} and Lemma \ref{lem:7.4} in Section \ref{stab_per} contain proofs of the following estimates:

\begin{lem}(cf. \cite[Lemma 5.4]{gunther1989einbettungssatz}) 
Let $a\in C_0^{\infty}(B_1(0))$ be fixed, then we have the following estimates: If $v_1,v_2\in C^{2,\alpha}(B_1(0),\mathbb{R}^q)$ then
\begin{equation}\label{eq:5.31}
\left|N_i[a](v_1)-N_i[a](v_2) \right|_{C^{0,\alpha}} \leq K(\alpha,a)\cdot \left(\left|v_1 \right|_{C^{2,\alpha}}+\left|v_2 \right|_{C^{2,\alpha}} \right)\left|v_1-v_2 \right|_{C^{2,\alpha}}
\end{equation}
for all $1\leq i \leq n$ and
\begin{align}\label{eq:5.33}
\left|M_{ij}[a](v_1) -M_{ij}[a](v_2) \right|_{C^{0,\alpha}}\leq K(\alpha,a)\cdot \left(\left|v_1 \right|_{C^{2,\alpha}}+\left|v_2 \right|_{C^{2,\alpha}} \right)\left|v_1-v_2 \right|_{C^{2,\alpha}}
\end{align}
for all $1\leq i \leq j\leq n$. If $m\in\mathbb{N}$ and $v\in {C^{m+2,\alpha}(B_1(0),\mathbb{R}^q)}$ then
\begin{equation*}\label{eq:5.32}
\left|N_i[a](v) \right|_{C^{m,\alpha}}+\left|M_{ij}[a](v) \right|_{C^{m,\alpha}}\leq K(\alpha,a)\cdot\left|v \right|_{C^{m+2,\alpha}}\left|v \right|_{C^{2,\alpha}}+C(m,\alpha,a)\cdot\left|v \right|^2_{C^{m+1,\alpha}}
\end{equation*}
for all $1\leq i \leq j \leq n$.
\end{lem}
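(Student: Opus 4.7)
The plan is to exploit the essentially bilinear (more precisely, quadratic) dependence of $N_i[a]$ and $M_{ij}[a]$ on $v$, combined with the standard H\"older product estimate $|fg|_{C^{0,\alpha}}\leq C|f|_{C^{0,\alpha}}|g|_{C^{0,\alpha}}$ and the elliptic estimates \eqref{eq:5.22} and \eqref{eq:5.30} for $\Delta^{-1}$.

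For the Lipschitz-type estimates \eqref{eq:5.31} and \eqref{eq:5.33} I would use the standard polarization trick: for every quadratic summand $B(v)$ appearing in $N_i[a]$ or $R_{ij}[a]$ I would write
\begin{equation*}
B(v_1)-B(v_2)=B_{\mathrm{bilin}}(v_1-v_2,v_1)+B_{\mathrm{bilin}}(v_2,v_1-v_2),
\end{equation*}
which, after applying the product estimate to each factor and absorbing the derivatives of $a$, produces the factor $(|v_1|_{C^{2,\alpha}}+|v_2|_{C^{2,\alpha}})|v_1-v_2|_{C^{2,\alpha}}$. For $M_{ij}[a]=L_{ij}[a]+R_{ij}[a]$ the explicit bilinear formula for $R_{ij}[a]$ is handled exactly the same way, while the $L_{ij}[a]$ piece reduces to the $N_l[a]$ estimate once one bounds $|\partial^{s_2}\Delta^{-1}N_l[a](v)|_{C^{0,\alpha}}$ via \eqref{eq:5.22}.

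For the higher-order bound I would apply the Leibniz-type inequality $|fg|_{C^{m,\alpha}}\leq C\sum_{k=0}^m |f|_{C^{k,\alpha}}|g|_{C^{m-k,\alpha}}$ to each summand of $N_i[a]$ and $R_{ij}[a]$. The extreme terms, in which all $m$ derivatives land on the single factor carrying the highest-order $\Delta v$, produce $|v|_{C^{m+2,\alpha}}|v|_{C^{2,\alpha}}$ with a coefficient depending only on finitely many derivatives of $a$, so they contribute to $K(\alpha,a)|v|_{C^{m+2,\alpha}}|v|_{C^{2,\alpha}}$. Every other term has each $v$-factor of order at most $m+1$ and is absorbed into $C(m,\alpha,a)|v|^2_{C^{m+1,\alpha}}$. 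For $L_{ij}[a]$ I would invoke \eqref{eq:5.30}, applicable because $N_l[a](v)$ inherits the compact support of $a$, to obtain $|\Delta^{-1}N_l[a](v)|_{C^{m+2,\alpha}}\leq K|N_l[a](v)|_{C^{m,\alpha}}+C|N_l[a](v)|_{C^{m-1,\alpha}}$; inserting the $N_l$ bound just proved and using $|v|^2_{C^{m,\alpha}}\leq |v|^2_{C^{m+1,\alpha}}$ closes the argument.

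The main subtlety is to keep the leading constant $K(\alpha,a)$ genuinely independent of $m$, which is what makes the subsequent fixed-point argument in \eqref{eq:5.16} work uniformly. This forces the use of the improved estimate \eqref{eq:5.30} in place of \eqref{eq:5.22} when handling $\Delta^{-1}N_l[a](v)$, because the one-derivative loss in the lower-order summand of \eqref{eq:5.30} is precisely what allows $|N_l[a](v)|_{C^{m-1,\alpha}}$ to be re-absorbed into $C(m,\alpha,a)|v|^2_{C^{m+1,\alpha}}$ without polluting the $K$-coefficient.
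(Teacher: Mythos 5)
Your proposal matches the paper's proof (carried out in Lemmas \ref{lem:7.2} and \ref{lem:7.4} of Section \ref{stab_per}, specialized to $r=0$): the Lipschitz bounds use exactly the bilinear splitting $B(v_1)-B(v_2)=B(v_1-v_2,v_1)+B(v_2,v_1-v_2)$ applied termwise to $N_i[a]$ and $R_{ij}[a]$, with $L_{ij}[a]$ reduced to the $N_l$ estimate via the $C^{2,\alpha}$ Schauder bound for $\Delta^{-1}$, and the higher-order bound rests on the sharp product rule \eqref{eq:A.15} (whose leading terms carry an $m$-independent coefficient) together with the improved Schauder estimate \eqref{eq:5.30} precisely in the $|s_2|=2$ case of $L_{ij}[a]$. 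Your closing remark on why \eqref{eq:5.30} rather than \eqref{eq:5.22} is forced there — to keep the coefficient $K(\alpha,a)$ of the top-order term independent of $m$ — is exactly the structural point the paper relies on.
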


The following continuity- and regularity estimates for the operators $N_i[a]$ and $M_{ij}[a]$ are proved in Lemma \ref{lem:7.5} within the
time-dependent context:

\begin{lem}(cf. \cite[(5.17)/(5.18)]{gunther1989einbettungssatz})\label{lem:5.d}
Let $a\in C_0^{\infty}(B_1(0))$ be fixed, then the operators $P$ (cf. \eqref{eq:5.14}) and $Q$ (cf. \eqref{eq:5.15})
are satisfying the following estimates: For all $v_1,v_2\in C^{2,\alpha}(B_1(0),\mathbb{R}^q)$:
\begin{align}\label{eq:5.37}
\begin{split}
&\left|P[a](v_1)-P[a](v_2) \right|_{C^{2,\alpha}}+\left|Q[a](v_1)-Q[a](v_2) \right|_{C^{2,\alpha}}\\
&\hspace{3cm}\leq K(\alpha,a)\cdot\left(\left|v_1 \right|_{C^{2,\alpha}} +\left|v_2 \right|_{C^{2,\alpha}}\right)\cdot \left|v_1-v_2 \right|_{C^{2,\alpha}}
\end{split}
\end{align}
If $m\geq 2$ and $v\in {C^{m,\alpha}(B_1(0),\mathbb{R}^q)}$ then
\begin{align}\label{eq:5.38}
\left|P[a](v) \right|_{C^{m,\alpha}}+\left|Q[a](v) \right|_{C^{m,\alpha}}\leq K(\alpha,a)\cdot\left|v \right|_{C^{m,\alpha}} \left|v \right|_{C^{2,\alpha}}+C(m,\alpha,a)\cdot \left|v \right|_{C^{m-1,\alpha}}^2
\end{align}
\end{lem}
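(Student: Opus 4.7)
The plan is to reduce the two estimates to (i) the corresponding estimates for the inner operators $N_i[a]$ and $M_{ij}[a]$ established in the preceding lemma and (ii) the elliptic regularity bounds \eqref{eq:5.22} and \eqref{eq:5.30} for the Dirichlet problem for $\Delta$. A key structural observation is that every term in $N_i[a](v)$, $L_{ij}[a](v)$ and $R_{ij}[a](v)$ carries $a$ or a derivative of $a$ as a factor, hence both $N_i[a](v)$ and $M_{ij}[a](v)$ are supported in $\mathrm{supp}(a) \subset B_R(0) \subset\subset B_1(0)$ for some $R = R(a) < 1$. This makes the refined estimate \eqref{eq:5.30} applicable.

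For the continuity bound \eqref{eq:5.37}, I would exploit linearity of $\Delta^{-1}$ to write $P[a](v_1) - P[a](v_2) = a\, \Delta^{-1}(N[a](v_1) - N[a](v_2))$ and similarly for $Q$. Applying \eqref{eq:5.22} with $m=0$ bounds the $C^{2,\alpha}$-norm of $\Delta^{-1}(\cdots)$ by a constant times the $C^{0,\alpha}$-norm of the argument, and then \eqref{eq:5.31} (respectively \eqref{eq:5.33}) produces the claimed quadratic dependence on $|v_1|_{C^{2,\alpha}} + |v_2|_{C^{2,\alpha}}$ and $|v_1 - v_2|_{C^{2,\alpha}}$. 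The outer factor $a$ in $P$ is absorbed into $K(\alpha,a)$ through $|aw|_{C^{2,\alpha}} \leq C|a|_{C^{2,\alpha}}|w|_{C^{2,\alpha}}$.

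For the regularity estimate \eqref{eq:5.38}, I would apply \eqref{eq:5.30} with the parameter shifted to $m-2$ to $\Delta^{-1} N_i[a](v)$ and $\Delta^{-1} M_{ij}[a](v)$, getting a leading term $K(\alpha,R)$ times the $C^{m-2,\alpha}$-norm plus a subleading contribution on $C^{m-3,\alpha}$ with $m$-dependent constant. The higher-order estimate from the preceding lemma, applied with index $m-2$, yields
\begin{equation*}
|N_i[a](v)|_{C^{m-2,\alpha}} + |M_{ij}[a](v)|_{C^{m-2,\alpha}} \leq K(\alpha,a)\,|v|_{C^{m,\alpha}}|v|_{C^{2,\alpha}} + C(m,\alpha,a)\,|v|_{C^{m-1,\alpha}}^2,
\end{equation*}
while the analogous inequality at level $m-3$ feeds only into the $|v|_{C^{m-1,\alpha}}^2$ piece, using $|v|_{C^{2,\alpha}} \leq |v|_{C^{m-1,\alpha}}$ for $m \geq 3$. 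For $P$, I would then multiply by $a$ via the Leibniz-type bound $|aw|_{C^{m,\alpha}} \leq |a|_{C^{0}}|w|_{C^{m,\alpha}} + C(m,a)|w|_{C^{m-1,\alpha}}$, which keeps $m$-dependence off the leading coefficient. The case $m=2$ follows directly from \eqref{eq:5.37} with $v_2 = 0$.

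The crucial and only delicate point is the $m$-independence of $K(\alpha,a)$ on the leading term $|v|_{C^{m,\alpha}}|v|_{C^{2,\alpha}}$. Using the naive estimate \eqref{eq:5.22} would yield a constant $C(m,\alpha)$ on the leading term, which is useless when the estimate is iterated to bootstrap regularity (as will be needed to produce the $C^\infty$ solution of the fixed-point equation \eqref{eq:5.16}). The compact support of $N_i[a](v)$ and $M_{ij}[a](v)$, inherited from that of $a$, is precisely the structural feature that makes the improved estimate \eqref{eq:5.30} available and so allows the $m$ dependence to be shunted entirely onto the subleading $|v|_{C^{m-1,\alpha}}^2$ piece.
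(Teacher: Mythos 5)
Your proposal is correct and follows essentially the same route as the paper (which proves this as the time-independent content of Lemma~\ref{lem:7.5}): reduce to the estimates for $N_i[a]$ and $M_{ij}[a]$, invoke \eqref{eq:5.20} for the continuity bound, and invoke \eqref{eq:5.30} — justified by the compact support of $N_i[a](v)$ and $M_{ij}[a](v)$ inherited from $a$ — together with the Leibniz-type bound \eqref{eq:A.13} so that the $m$-dependent constant lands only on the $|v|_{C^{m-1,\alpha}}^2$ term. You have also correctly identified the $m$-independence of $K(\alpha,a)$ as the decisive point enabling the later bootstrap.
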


The next two results show important properties of the operator $E[F_0]$ (cf. \eqref{eq:5.19}). We refer to 
the proofs of Lemma \ref{lem:7.6} and Lemma \ref{lem:7.7}.
\begin{lem}\label{lem:5.a}
Let $F_0\in C^{\infty}(\overline{B_1(0)},\mathbb{R}^q)$ be a free immersion. Then the linear operator $E[F_0]$ is continuous and for all
 $h\in C^{m,\alpha}(B_1(0),\mathbb{R}^n)$ and $f\in C^{m,\alpha}(B_1(0),\mathbb{R}^{\frac{n}{2}(n+1)})$ we have:
\begin{equation}\label{eq:5.46}
\left|E[F_0](h,f) \right|_{C^{m,\alpha}}\leq C(m,\alpha,F_0)\cdot\left(\left|h \right|_{C^{m,\alpha}}+\left|f \right|_{C^{m,\alpha}}\right)
\end{equation}
\end{lem}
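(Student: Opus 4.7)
The plan is to reduce Lemma~\ref{lem:5.a} to a smoothness statement about the coefficient matrix $\Theta[F_0]$ together with the standard product estimate in Hölder spaces. Since the map $(h,f)\mapsto \Theta[F_0](x)\cdot (h(x),f(x))^{\top}$ is linear in $(h,f)$ by definition, continuity between the stated Banach spaces is equivalent to the inequality~\eqref{eq:5.46}, so it suffices to establish the bound.

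The main step is to show that $\Theta[F_0]\in C^{\infty}(\overline{B_1(0)},\mathbb{R}^{q\times\frac{n}{2}(n+3)})$ with all $C^{k}$-norms controlled by a constant depending only on $F_0$. The entries of $A[F_0]$ are first- and second-order partial derivatives of the smooth map $F_0$, hence smooth on $\overline{B_1(0)}$. Freeness of $F_0$ is exactly the statement that the rows of $A[F_0](x)$ are linearly independent at every $x\in\overline{B_1(0)}$, so the Gram matrix $G(x):=A[F_0](x)\,A^{\top}[F_0](x)$ is symmetric and positive definite pointwise. The continuous function $x\mapsto$ (smallest eigenvalue of $G(x)$) attains a positive minimum on the compact set $\overline{B_1(0)}$, so $G(x)\geq c\, I$ for some $c=c(F_0)>0$. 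Cramer's rule then expresses $G^{-1}$ as a smooth rational function of the smooth entries of $G$, hence $G^{-1}\in C^{\infty}(\overline{B_1(0)})$ with $\left|G^{-1}\right|_{C^{k}}\leq C(k,F_0)$ for every $k$. Consequently $\Theta[F_0]=A^{\top}[F_0]\cdot G^{-1}$ is smooth on $\overline{B_1(0)}$ and satisfies $\left|\Theta[F_0]\right|_{C^{m,\alpha}}\leq C(m,\alpha,F_0)$.

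Finally, I would invoke the standard multiplicative inequality $|uw|_{C^{m,\alpha}}\leq C(m,\alpha)|u|_{C^{m,\alpha}}|w|_{C^{m,\alpha}}$ entrywise in the matrix-vector product $\Theta[F_0](x)\cdot(h(x),f(x))^{\top}$, which together with the previous step gives
$$\left|E[F_0](h,f)\right|_{C^{m,\alpha}}\leq C(m,\alpha)\left|\Theta[F_0]\right|_{C^{m,\alpha}}\bigl(|h|_{C^{m,\alpha}}+|f|_{C^{m,\alpha}}\bigr)\leq C(m,\alpha,F_0)\bigl(|h|_{C^{m,\alpha}}+|f|_{C^{m,\alpha}}\bigr),$$
which is exactly \eqref{eq:5.46}. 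The only nontrivial ingredient is the uniform positive lower bound on the smallest eigenvalue of $G$, which follows at once from freeness of $F_0$ combined with compactness of $\overline{B_1(0)}$; I do not anticipate a substantial obstacle in the argument.
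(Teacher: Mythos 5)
Your proof is correct and follows essentially the same route as the paper's (which appears as the $r=0$ case of the proof of Lemma~\ref{lem:7.6}): write $E[F_0](h,f)$ componentwise as a sum of products of the smooth coefficient functions making up $\Theta[F_0]$ with the components of $h$ and $f$, and apply the Hölder product estimate. The only difference is that you make explicit the justification that $\Theta[F_0]$ is smooth with controlled $C^{m,\alpha}$-norms (freeness $\Rightarrow$ uniformly positive-definite Gram matrix on the compact ball $\Rightarrow$ Cramer's rule), whereas the paper simply asserts that the coefficient functions $A_{l,i},B_{l,ij}$ lie in $C^{\infty}(\overline{B_1(0)})$ and depend on $F_0$; your extra paragraph fills in that suppressed step correctly.
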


Throughout let $\left\Vert E[F_0] \right\Vert_{m,\alpha}$ be  the \textit{operator norm} of $E[F_0]$. The estimate \eqref{eq:5.46} may be improved:

\begin{lem}(cf. \cite[Lemma 5.2]{gunther1989einbettungssatz})\label{lem:5.e}
Let $m\geq 3$, then we have the following estimate:
\begin{align}\label{eq:5.49}
\begin{split}
\left|E[F_0](h,f) \right|_{C^{m,\alpha}}\leq &\left\Vert E[F_0] \right\Vert_{2,\alpha}\cdot \left(\left|h \right|_{C^{m,\alpha}}+\left|f \right|_{C^{m,\alpha}}\right)+C(m,\alpha,F_0)\cdot\left(\left|h \right|_{C^{m-1,\alpha}}+\left|f \right|_{C^{m-1,\alpha}}\right)
\end{split}
\end{align}
for all $h\in C^{m,\alpha}(B_1(0),\mathbb{R}^n)$ and $f\in C^{m,\alpha}(B_1(0),\mathbb{R}^{\frac{n}{2}(n+1)})$.
\end{lem}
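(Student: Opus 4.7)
The plan is to exploit that $E[F_0]$ is pointwise multiplication by the smooth matrix $\Theta[F_0]$ introduced in \eqref{eq:5.41}, so that every derivative of $E[F_0](h,f)=\Theta[F_0]\cdot (h,f)^{\top}$ can be expanded by the Leibniz rule. The key observation is that one should differentiate \emph{exactly} $m-2$ times and then take $C^{2,\alpha}$-norms, so that the operator norm $\left\Vert E[F_0]\right\Vert_{2,\alpha}$ — which lives on $C^{2,\alpha}$-spaces — enters with the optimal constant. Indeed, for any multi-index $s$ with $|s|=m-2$, the ``top Leibniz term'' $\Theta[F_0]\cdot \partial^{s}(h,f)^{\top}$ coincides with $E[F_0](\partial^{s}h,\partial^{s}f)$, and all other Leibniz terms carry at most $m-3$ derivatives of $(h,f)$.

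Concretely, for $|s|=m-2$ I would write
\begin{equation*}
\partial^{s}\bigl(\Theta[F_0]\cdot(h,f)^{\top}\bigr)=E[F_0](\partial^{s}h,\partial^{s}f)+\sum_{0<t\leq s}\binom{s}{t}\,\partial^{t}\Theta[F_0]\cdot\partial^{s-t}(h,f)^{\top}
\end{equation*}
and measure both sides in $C^{2,\alpha}$. The first term on the right is bounded by the very definition of the operator norm:
\begin{equation*}
\bigl|E[F_0](\partial^{s}h,\partial^{s}f)\bigr|_{C^{2,\alpha}}\leq \left\Vert E[F_0]\right\Vert_{2,\alpha}\cdot\bigl(|\partial^{s}h|_{C^{2,\alpha}}+|\partial^{s}f|_{C^{2,\alpha}}\bigr)\leq \left\Vert E[F_0]\right\Vert_{2,\alpha}\cdot\bigl(|h|_{C^{m,\alpha}}+|f|_{C^{m,\alpha}}\bigr).
\end{equation*}
For each remaining Leibniz term with $|t|\geq 1$, after taking $C^{2,\alpha}$-norms at most $m-1$ derivatives of $(h,f)$ appear, so the standard product estimate for Hölder spaces together with the smoothness of $\Theta[F_0]$ on $\overline{B_1(0)}$ bounds it by $C(m,\alpha,F_0)\cdot\bigl(|h|_{C^{m-1,\alpha}}+|f|_{C^{m-1,\alpha}}\bigr)$. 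Finally I would reconstruct $\left|E[F_0](h,f)\right|_{C^{m,\alpha}}$ from the quantities $\max_{|s|=m-2}\bigl|\partial^{s}(E[F_0](h,f))\bigr|_{C^{2,\alpha}}$ and a lower-order $C^{m-1}$-piece, using the standard equivalence $|u|_{C^{m,\alpha}}\lesssim |u|_{C^{m-1}}+\max_{|s|=m-2}|\partial^{s}u|_{C^{2,\alpha}}$ valid for $m\geq 3$, and absorb $|E[F_0](h,f)|_{C^{m-1}}$ into the lower-order term by applying Lemma \ref{lem:5.a} at level $m-1$.

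The main obstacle will be to produce the sharp coefficient $\left\Vert E[F_0]\right\Vert_{2,\alpha}$ with no spurious multiplicative factor in front. This forces careful bookkeeping about which norm-equivalence is invoked and to ensure that \emph{only} the top Leibniz term is estimated through the operator norm, while all others are genuinely of lower order; the hypothesis $m\geq 3$ is precisely what guarantees $|s-t|\leq m-3$ in the remainder, so these terms do not pollute the leading coefficient. This sharper version of Lemma \ref{lem:5.a} is exactly what will later allow one to run the fixed-point iteration for \eqref{eq:5.18} with uniform constants in the time-dependent analysis of Section \ref{stab_per}.
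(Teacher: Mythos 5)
Your strategy is exactly the paper's (see the proof of Lemma~\ref{lem:7.7} specialised to $r=0$): use the component form $E_l[F_0](h,f)=\sum_i A_{l,i}h_i+\sum_{ij}B_{l,ij}f_{ij}$, apply the Leibniz rule to $\partial^{\beta}$ with $|\beta|=m-2$, estimate the top Leibniz term $E[F_0](\partial^{\beta}h,\partial^{\beta}f)$ by the $C^{2,\alpha}$-operator norm, push the commutator into the $C^{m-1,\alpha}$-term, and reconstruct the $C^{m,\alpha}$-norm from the $(m-2)$-fold derivatives. The one place where your sketch should be tightened is the reconstruction step: the equivalence $|u|_{C^{m,\alpha}}\lesssim |u|_{C^{m-1}}+\max_{|s|=m-2}|\partial^su|_{C^{2,\alpha}}$ you invoke necessarily carries a multi-index-counting constant $>1$, which would pollute the leading coefficient $\left\Vert E[F_0]\right\Vert_{2,\alpha}$; the paper sidesteps this by using the coefficient-one sum inequality \eqref{eq:A.11}, $|u|_{C^{m,\alpha}}\leq|u|_{C^{0,\alpha}}+\sum_{|s|=m-2}|\partial^su|_{C^{2,\alpha}}$, and then controlling $\sum_{|\beta|=m-2}\bigl(|\partial^{\beta}h|_{C^{2,\alpha}}+|\partial^{\beta}f|_{C^{2,\alpha}}\bigr)$ via \eqref{eq:A.12}. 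Replacing your $\max$/$\lesssim$ claim by \eqref{eq:A.11} makes your argument coincide with the paper's.
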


\subsection{Solution to the fixed-point problem}
Now we show that the fixed-point equation in \eqref{eq:5.16} has a solution if the quantity $\left\Vert E[F_0]\right\Vert_{2,\alpha} \left|E[F_0](0,f) \right|_{C^{2,\alpha}}$
 is sufficiently small. In order to show this statement, we construct an \textit{approximation sequence} via a \textit{fixed-point iteration} (cf. \eqref{eq:5.47}).
\begin{lem}\label{lem:5.13}
Let $a\in C^{\infty}_0(B_1(0))$ be fixed, then there exists $\vartheta(\alpha,a)>0$ satisfying the following property:
If $F_0\in C^{\infty}(\overline{B_1(0)},\mathbb{R}^q)$ is a free immersion and $f\in C^{\infty}(\overline{B_1(0)},\mathbb{R}^{\frac{n}{2}(n+1)})$ such that
\begin{equation*}
\left\Vert E[F_0]\right\Vert_{2,\alpha}\left|E[F_0](0,f) \right|_{C^{2,\alpha}}\leq \vartheta
\end{equation*}
then, the operator $\Phi[F_0,a,f]$ (cf. \eqref{eq:5.18}) has a fixed-point $v\in C^{\infty}(\overline{B_1(0)},\mathbb{R}^q)$. This fixed-point 
satisfies the following estimate:
\begin{equation*}
\left|v \right|_{C^{2,\alpha}}\leq \left|E[F_0](0,f) \right|_{C^{2,\alpha}}
\end{equation*}
\end{lem}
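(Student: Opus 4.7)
The plan is to apply Banach's fixed-point theorem to $\Phi[F_0,a,f]$ on a small closed ball in $C^{2,\alpha}(B_1(0),\mathbb{R}^q)$, and afterwards to bootstrap the resulting $C^{2,\alpha}$-solution to $C^\infty$. Throughout the argument I abbreviate $\mathcal{K}:=\left\Vert E[F_0]\right\Vert_{2,\alpha}$ and $\eta:=\left|E[F_0](0,f)\right|_{C^{2,\alpha}}$, and work on the closed ball $\mathcal{B}_\eta:=\{v\in C^{2,\alpha}(B_1(0),\mathbb{R}^q):\left|v\right|_{C^{2,\alpha}}\leq\eta\}$.

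For the self-map property, the linearity of $E[F_0]$ allows me to isolate the inhomogeneity as
\begin{equation*}
\Phi[F_0,a,f](v)=-E[F_0]\bigl(P[a](v),0\bigr)-\tfrac{1}{2}E[F_0](0,f)+\tfrac{1}{2}E[F_0]\bigl(0,Q[a](v)\bigr),
\end{equation*}
so that the operator-norm bound applied to the first and third summands, together with \eqref{eq:5.38} at $m=2$, gives
\begin{equation*}
\left|\Phi[F_0,a,f](v)\right|_{C^{2,\alpha}}\leq \mathcal{K}\,K_1(\alpha,a)\left|v\right|_{C^{2,\alpha}}^{2}+\tfrac{1}{2}\eta.
\end{equation*}
For the contraction property, the same splitting combined with \eqref{eq:5.37} yields
\begin{equation*}
\left|\Phi[F_0,a,f](v_1)-\Phi[F_0,a,f](v_2)\right|_{C^{2,\alpha}}\leq \mathcal{K}\,K_2(\alpha,a)\bigl(\left|v_1\right|_{C^{2,\alpha}}+\left|v_2\right|_{C^{2,\alpha}}\bigr)\left|v_1-v_2\right|_{C^{2,\alpha}}.
\end{equation*}
Choosing $\vartheta(\alpha,a)$ so small that the hypothesis $\mathcal{K}\eta\leq\vartheta$ forces both $\mathcal{K}K_1(\alpha,a)\eta\leq\tfrac{1}{2}$ and $2\mathcal{K}K_2(\alpha,a)\eta\leq\tfrac{1}{2}$, $\Phi[F_0,a,f]$ maps $\mathcal{B}_\eta$ into itself and contracts there with factor $\tfrac{1}{2}$. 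Banach's theorem then produces a unique fixed-point $v\in\mathcal{B}_\eta$, which in particular satisfies the desired bound $\left|v\right|_{C^{2,\alpha}}\leq\eta$.

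To upgrade this $v$ to a smooth function, I would work with the approximation sequence $v_0:=0$, $v_{k+1}:=\Phi[F_0,a,f](v_k)$ furnished by the fixed-point iteration, which by the previous step stays inside $\mathcal{B}_\eta$ and converges to $v$ in $C^{2,\alpha}$. Because $F_0$, $a$ and $f$ are smooth and each building block of $\Phi[F_0,a,f]$ (namely $N_i[a]$, $M_{ij}[a]$, multiplication by the smooth matrix $\Theta[F_0]$, and Poisson's operator $\Delta^{-1}$ with trivial boundary data) preserves $C^\infty$, every $v_k$ lies in $C^\infty(\overline{B_1(0)},\mathbb{R}^q)$. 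The main step of the bootstrap is then the inductive claim that $\sup_k\left|v_k\right|_{C^{m,\alpha}}<\infty$ for every $m\geq 2$: combining \eqref{eq:5.49} with the higher-order estimate \eqref{eq:5.38} leads to a recursion of the form
\begin{equation*}
\left|v_{k+1}\right|_{C^{m,\alpha}}\leq \mathcal{K}\,K(\alpha,a)\,\eta\,\left|v_k\right|_{C^{m,\alpha}}+C\bigl(m,\alpha,a,F_0,f,\textstyle\sup_{j\leq k}\left|v_j\right|_{C^{m-1,\alpha}}\bigr),
\end{equation*}
whose leading coefficient is at most $\tfrac{1}{2}$ after possibly shrinking $\vartheta(\alpha,a)$ once more. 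Feeding in the inductive hypothesis, this recursion propagates to a uniform $C^{m,\alpha}$-bound on $\{v_k\}$; Arzelà--Ascoli together with the already established $C^{2,\alpha}$-convergence $v_k\to v$ then places $v$ in $C^{m,\beta}$ for every $\beta<\alpha$ and every $m\geq 2$, so that $v\in C^{\infty}(\overline{B_1(0)},\mathbb{R}^q)$.

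I expect the main obstacle to be exactly this last regularity bootstrap. The operators $P[a]$ and $Q[a]$ couple $\Delta v$ (a two-derivative loss) with $\Delta^{-1}$ (a two-derivative gain), so a single application of $\Phi[F_0,a,f]$ produces no net regularity gain and one cannot simply iterate a Schauder-type improvement. The smoothness must instead be extracted indirectly from the absorption mechanism encoded in \eqref{eq:5.38}, and this is the point at which the smallness condition on $\mathcal{K}\eta$ has to be used a second time.
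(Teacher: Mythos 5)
Your proposal is correct and follows essentially the same route as the paper: the paper also uses the fixed-point iteration $v_0=0$, $v_k=\Phi[F_0,a,f](v_{k-1})$, establishes $|v_k|_{C^{2,\alpha}}\leq |E[F_0](0,f)|_{C^{2,\alpha}}$ and a $1/2$-contraction estimate to get convergence in $C^{2,\alpha}$ (your invocation of Banach's theorem on the ball $\mathcal{B}_\eta$ is just a repackaging of these same two steps), and then bootstraps via estimates \eqref{eq:5.49} and \eqref{eq:5.38} to get uniform $C^{m,\alpha}$-bounds on the iterates by induction on $m$, using that the leading constant $K(\alpha,a)$ in \eqref{eq:5.38} is independent of $m$ so a single choice of $\vartheta$ suffices.
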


\begin{proof}
Let $(v_k)_{k\in\mathbb{N}}\subset C^{\infty}(\overline{B_1(0)},\mathbb{R}^q)$ defined as follows:
\begin{align}\label{eq:5.47}
v_k=
\begin{cases}
0 & \text{if }  k=0\\
\Phi[F_0,a,f](v_{k-1})  &\text{if } k\geq 1
\end{cases}
\end{align}
We show that this sequence is a $C^{2,\alpha}$-Cauchy sequence if $\left\Vert E[F_0]\right\Vert_{2,\alpha} \left|E[F_0](0,f) \right|_{C^{2,\alpha}} $
is small enough.
In order to prove this fact, we derive an upper bound at first.
\begin{align*}
\left|v_k \right|_{C^{2,\alpha}}\stackrel{\eqref{eq:5.47}}{=}&\left|\Phi[F_0,a,f](v_{k-1})\right|_{C^{2,\alpha}}\\
\stackrel{\eqref{eq:5.18}}{\leq}&\left|E[F_0]\left(P[a](v_{k-1}),-\frac{1}{2}Q[a](v_{k-1}) \right)\right|_{C^{2,\alpha}}+\left|E[F_0]\left(0,\frac{1}{2}f \right)\right|_{C^{2,\alpha}}\\
\stackrel{\eqref{eq:5.46}}{\leq}& \left\Vert E[F_0] \right\Vert_{2,\alpha}\cdot\left(\left|P[a](v_{k-1}) \right|_{C^{2,\alpha}} +\left|Q[a](v_{k-1}) \right|_{C^{2,\alpha}}\right)+\frac{1}{2}\left|E[F_0]\left(0,f \right)\right|_{C^{2,\alpha}}\\
\stackrel{\eqref{eq:5.38}}{\leq}&K_1(\alpha,a)\cdot \left\Vert E[F_0] \right\Vert_{2,\alpha}\cdot\left|v_{k-1} \right|_{C^{2,\alpha}}^2+\frac{1}{2}\left|E[F_0]\left(0,f \right)\right|_{C^{2,\alpha}}
\end{align*}
If
\begin{equation*}
\left\Vert E[F_0]\right\Vert_{2,\alpha} \left|E[F_0](0,f) \right|_{C^{2,\alpha}}\leq \frac{1}{2K_1(\alpha,a)}
\end{equation*}
then
\begin{align*}
\left|v_k \right|_{C^{2,\alpha}}\leq \frac{1}{2}\left(\frac{\left|v_{k-1} \right|_{C^{2,\alpha}}^2}{ \left|E[F_0](0,f) \right|_{C^{2,\alpha}}}+ \left|E[F_0]\left(0,f \right)\right|_{C^{2,\alpha}}\right)
\end{align*}
Since $v_0=0$, using Lemma \ref{lem:7.8}, we obtain the estimate
\begin{equation}\label{eq:5.48}
\left|v_k \right|_{C^{2,\alpha}}\leq \left|E[F_0]\left(0,f \right)\right|_{C^{2,\alpha}}
\end{equation}
for all $k\in\mathbb{N}$ and consequently
\begin{align*}
\left|v_{k+1}-v_{k} \right|_{C^{2,\alpha}}\stackrel{\eqref{eq:5.47}}{=}&\left|\Phi[F_0,a,f](v_{k})-\Phi[F_0,a,f](v_{k-1}) \right|_{C^{2,\alpha}}\\
\stackrel{\eqref{eq:5.18}}{=}&\left|E[F_0]\left(P[a](v_k)-P[a](v_{k-1}),-\frac{1}{2}(Q[a](v_k)-Q[a](v_{k-1}))\right) \right|_{C^{2,\alpha}}\\
\stackrel{\eqref{eq:5.46}}{\leq}& \left\Vert E[F_0]\right\Vert_{2,\alpha}\cdot\Bigl(\left|P[a](v_k)-P[a](v_{k-1})\right|_{C^{2,\alpha}}+\left| Q[a](v_k)-Q[a](v_{k-1}) \right|_{{C^{2,\alpha}}}\Bigr)\\
\stackrel{\eqref{eq:5.37}}{\leq}& \frac{1}{2} K_2(\alpha,a)\cdot\left\Vert E[F_0]\right\Vert_{2,\alpha}\cdot\left(\left|v_k \right|_{C^{2,\alpha}} +\left|v_{k-1} \right|_{C^{2,\alpha}}\right) \left|v_k-v_{k-1} \right|_{C^{2,\alpha}}\\
\stackrel{\eqref{eq:5.48}}{\leq}& K_2(\alpha,a)\cdot \left\Vert E[F_0]\right\Vert_{2,\alpha}\left|E[F_0](0,f)\right|_{C^{2,\alpha}} \left|v_k-v_{k-1}\right|_{C^{2,\alpha}}\\
\end{align*}
If
\begin{equation*}
\left\Vert E[F_0]\right\Vert_{2,\alpha} \left|E[F_0](0,f)\right|_{C^{2,\alpha}} \leq \frac{1}{2K_2(\alpha,a)}
\end{equation*}
then
\begin{equation*}
\left|v_{k+1}-v_{k} \right|_{C^{2,\alpha}}\leq \frac{1}{2} \left|v_k-v_{k-1}\right|_{C^{2,\alpha}}
\end{equation*}
which shows that $(v_k)_{k\in\mathbb{N}}\subset C^{\infty}(\overline{B_1(0)},\mathbb{R}^q)$ is a $C^{2,\alpha}$-Cauchy sequence with boundary element
 $v\in C^{2,\alpha}(B_1(0),\mathbb{R}^q)$. In order to prove that $v\in C^{\infty}(\overline{B_1(0)},\mathbb{R}^q)$ holds, we show
that the sequence  $(v_k)_{k\in\mathbb{N}}$ is bounded in $\left|\cdot \right|_{C^{m,\alpha}}$. Let $m\geq 3$, suppose that
\begin{equation}\label{eq:5.50}
\left|v_{k} \right|_{C^{m-1,\alpha}}\leq \eta
\end{equation}
for all $k\in\mathbb{N}$. Then
\begin{align*}
&\left|v_k \right|_{C^{m,\alpha}}\\
\stackrel{\eqref{eq:5.47}}{=}&\left|\Phi[F_0,a,f](v_{k-1}) \right|_{C^{m,\alpha}}\\
\stackrel{\eqref{eq:5.18}}{\leq}&\left|E[F_0]\left(P[a](v_{k-1}),-\frac{1}{2}Q[a](v_{k-1}) \right)\right|_{C^{m,\alpha}}+\left|E[F_0]\left(0,\frac{1}{2}f \right)\right|_{C^{m,\alpha}}\\
\stackrel{\eqref{eq:5.49}}{\leq}&\left\Vert E[F_0] \right\Vert_{2,\alpha}\cdot\left(\left|P[a](v_{k-1}) \right|_{C^{m,\alpha}}+\left|Q[a](v_{k-1}) \right|_{C^{m,\alpha}} \right)\\
&+C_1(m,\alpha,F_0)\cdot \left(\left|P[a](v_{k-1}) \right|_{C^{m-1,\alpha}}+\left|Q[a](v_{k-1}) \right|_{C^{m-1,\alpha}} \right)+\frac{1}{2}\left|E[F_0]\left(0,f \right)\right|_{C^{m,\alpha}}\\
\stackrel{\eqref{eq:5.38}}{\leq}&K_3(\alpha,a) \cdot\left\Vert E[F_0] \right\Vert_{2,\alpha}\cdot\left|v_{k-1} \right|_{C^{m,\alpha}}\left|v_{k-1} \right|_{C^{2,\alpha}}+C_2(m,\alpha,F_0,a)\cdot \left|v_{k-1} \right|_{C^{m-1,\alpha}}^2\\
&\hspace{2cm}+\frac{1}{2}\left|E[F_0]\left(0,f \right)\right|_{C^{m,\alpha}}\\
\stackrel{\eqref{eq:5.48}}{\leq}&K_3(\alpha,a) \cdot\left\Vert E[F_0] \right\Vert_{2,\alpha}\left|E[F_0](0,f) \right|_{C^{2,\alpha}} \cdot\left|v_{k-1} \right|_{C^{m,\alpha}}+C_2(m,\alpha,F_0,a)\cdot \left|v_{k-1} \right|_{C^{m-1,\alpha}}^2\\
&\hspace{2cm}+\frac{1}{2}\left|E[F_0]\left(0,f \right)\right|_{C^{m,\alpha}}
\end{align*}
If
\begin{equation*}
\left\Vert E[F_0] \right\Vert_{2,\alpha}\cdot\left|E[F_0](0,f) \right|_{C^{2,\alpha}} \leq \frac{1}{2K_3(\alpha,a)}
\end{equation*}
then
\begin{align*}
\left|v_k \right|_{C^{m,\alpha}}\stackrel{\eqref{eq:5.50}}{\leq}& \frac{1}{2}\left(\left|v_{k-1} \right|_{C^{m,\alpha}}+\left|E[F_0]\left(0,f \right)\right|_{C^{m,\alpha}}+C(m,\alpha,F_0,a,\eta)\right)
\end{align*}
Since $v_0=0$, Lemma \ref{lem:7.8} implies
\begin{equation*}
\left|v_k \right|_{C^{m,\alpha}}\leq \left|E[F_0]\left(0,f \right)\right|_{C^{m,\alpha}}+C(m,\alpha,F_0,a,\eta)
\end{equation*}
for all $k\in\mathbb{N}$.
\end{proof}
Since $supp(f)\subset U_1$ we have $f= a^2 f$ for an appropriate choice of $a\in C^{\infty}_0(B_1(0))$. We conclude:
\begin{theorem}(cf. \cite[Satz 5.5]{gunther1989einbettungssatz})\label{satz:5.1}
Let $U_1, U_2\subset B_1(0)\subset \mathbb{R}^n$ be open sets satisfying $\overline{U}_1\subset U_2$ and $\overline{U}_2 \subset B_1(0)$. Then, there exist constants
$\vartheta,C>0$ that have the following property: If $F_0\in C^{\infty}(\overline{B_1(0)},\mathbb{R}^q)$ is a free immersion and $f\in C^{\infty}(\overline{B_1(0)},\mathbb{R}^{\frac{n}{2}(n+1)})$ with $supp(f)\subset U_1$ such that
\begin{equation*}
\left\Vert E[F_0]\right\Vert_{2,\alpha}\left|E[F_0](0,f) \right|_{C^{2,\alpha}}\leq \vartheta
\end{equation*}
then, there exists $u\in C^{\infty}(\overline{B_1(0)},\mathbb{R}^q)$ with $supp(u)\subset U_2$ such that for all $1\leq i\leq j\leq n$ the equation
\begin{equation*}
\partial_i(F_0+u)\cdot \partial_j (F_0+u) =\partial_i F_0\cdot \partial_j F_0+f_{ij}
\end{equation*}
is satisfied. Furthermore, the estimate
\begin{equation*}
\left|u \right|_{C^{2,\alpha}}\leq C \cdot \left|E[F_0](0,f) \right|_{C^{2,\alpha}}
\end{equation*}
holds. 
\end{theorem}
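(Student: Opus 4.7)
The plan is to reduce Theorem \ref{satz:5.1} to a direct application of Lemma \ref{lem:5.13} combined with Lemma \ref{lem:5.3}, by making a careful choice of the cutoff function $a$ that converts the factor $a^2 f$ appearing in \eqref{eq:5.17} into $f$ itself.

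First, I would fix once and for all a function $a\in C^{\infty}_0(B_1(0))$ depending only on the pair $(U_1,U_2)$: since $\overline{U}_1\subset U_2$ and $\overline{U}_2\subset B_1(0)$, a standard mollification of the characteristic function of $U_2$ produces an $a$ with $a\equiv 1$ on a neighborhood of $\overline{U}_1$ and $supp(a)\subset U_2$. For such $a$ the hypothesis $supp(f)\subset U_1$ yields $a^2 f\equiv f$ pointwise on $\overline{B_1(0)}$. I would then let $\vartheta=\vartheta(\alpha,a)$ be the threshold supplied by Lemma \ref{lem:5.13} for this particular $a$; this is the $\vartheta$ appearing in the statement of the theorem.

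Second, assume the smallness condition $\left\Vert E[F_0]\right\Vert_{2,\alpha}\left|E[F_0](0,f)\right|_{C^{2,\alpha}}\leq \vartheta$. Lemma \ref{lem:5.13} then furnishes a fixed point $v\in C^{\infty}(\overline{B_1(0)},\mathbb{R}^q)$ of $\Phi[F_0,a,f]$ with $|v|_{C^{2,\alpha}}\leq |E[F_0](0,f)|_{C^{2,\alpha}}$. Writing out $v=\Phi[F_0,a,f](v)=-E[F_0]\bigl(P[a](v),\tfrac{1}{2}f-\tfrac{1}{2}Q[a](v)\bigr)$ and using the defining identities
\begin{equation*}
\partial_i F_0\cdot E[F_0](h,g)=h_i,\qquad \partial_i\partial_j F_0\cdot E[F_0](h,g)=g_{ij}
\end{equation*}
together with the definitions \eqref{eq:5.14} and \eqref{eq:5.15} of $P[a]$ and $Q[a]$, I would verify that $v$ satisfies exactly the two pointwise relations \eqref{eq:5.7} and \eqref{eq:5.8} required as hypotheses of Lemma \ref{lem:5.3}. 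That lemma then gives $\partial_i(F_0+a^2 v)\cdot\partial_j(F_0+a^2v)=\partial_i F_0\cdot\partial_j F_0+a^2 f_{ij}=\partial_i F_0\cdot\partial_j F_0+f_{ij}$, where the last equality uses $a^2 f=f$.

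Third, I would define $u:=a^2 v$. Since $supp(a)\subset U_2$, we get $supp(u)\subset U_2$ as required, and $u\in C^{\infty}(\overline{B_1(0)},\mathbb{R}^q)$. The $C^{2,\alpha}$ estimate follows from the product rule together with the bound on $|v|_{C^{2,\alpha}}$ from Lemma \ref{lem:5.13}:
\begin{equation*}
|u|_{C^{2,\alpha}}=|a^2 v|_{C^{2,\alpha}}\leq C(\alpha,a)\,|v|_{C^{2,\alpha}}\leq C(\alpha,a)\,|E[F_0](0,f)|_{C^{2,\alpha}},
\end{equation*}
where the constant $C=C(\alpha,a)$ depends only on $(U_1,U_2)$ through the fixed cutoff $a$.

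There is no real obstacle beyond bookkeeping here: all the analytic difficulty has been absorbed into Lemma \ref{lem:5.13} (construction of the smooth fixed point, with the bootstrap to $C^{m,\alpha}$ for every $m$) and into Lemma \ref{lem:5.3} (algebraic verification that \eqref{eq:5.7}--\eqref{eq:5.8} force \eqref{eq:5.17}). The only step requiring care is checking that the choice of $a$ is legitimate and that $a^2 f=f$, which in turn justifies identifying $a^2 f_{ij}$ with $f_{ij}$ in the conclusion of Lemma \ref{lem:5.3}.
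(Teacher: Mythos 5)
Your proposal is correct and follows essentially the same route as the paper: the paper's entire argument is the remark immediately preceding the theorem (``Since $supp(f)\subset U_1$ we have $f=a^2 f$ for an appropriate choice of $a\in C^\infty_0(B_1(0))$''), followed by invoking Lemma \ref{lem:5.13} for the fixed point and Lemma \ref{lem:5.3} to convert it into the isometry equation with $u=a^2 v$. Your write-up simply makes explicit the choice of cutoff (so that $supp(a)\subset U_2$, which is needed for $supp(u)\subset U_2$), the unwinding of the fixed-point equation into \eqref{eq:5.7}--\eqref{eq:5.8}, and the product-rule bound giving the final $C^{2,\alpha}$ estimate.
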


\section{Stability of the solution to the local perturbation problem}\label{stab_per}
In this section we show that M. G\"unther's technique can be used to prove that, around a \textit{free isometric embedding}, it is possible to construct a smooth
parametrized family of free embeddings that realizes an isometric embedding of a given smooth one-parameter family of Riemannian metrics, for a short time. This is the content of Theorem \ref{globeinb}. To prove this statement, we start from the local viewpoint. In particular we want to establish the maximal regularity
in the local context. Subsequently, we apply this local argument to each local chart of a given finite covering of the underlying manifold.
But we need to pay attention to the fact that the \textit{first time-dependent perturbation} of the initial embedding has generated a \textit{family of free embeddings},  so that
the ``fixed-point operator'' in \eqref{eq:5.18} is \textit{time-dependent} too.

\subsection{Construction of a local solution}\label{sec:7.1}
First of all we are interested in the \textit{$C^0$-stability} of a given free embedding in the local context.
The following Lemma states that it is possible to choose the constant $\vartheta>0$ in Lemma \ref{lem:5.13} small enough so
that two fixed-points of the operator $\Phi$  satisfy a specific \textit{continuity estimate} if the corresponding energies are ``small enough''
in the $C^{2,\alpha}$-sense. Throughout $\alpha\in (0,1)$ be fixed.
\begin{lem}\label{lem:7.1}
Let $a\in C^{\infty}_0(B_1(0))$ be fixed. There exists a constant $\vartheta(\alpha,a)>0$ satisfying the following property: If $F_0\in C^{\infty}(\overline{B_1(0)},\mathbb{R}^q)$ is a free immersion and $f^{(1)},f^{(2)}\in C^{\infty}(\overline{B_1(0)},\mathbb{R}^{\frac{n}{2}(n+1)})$ so that
\begin{align}\label{eq:7.3}
\begin{split}
\left\Vert E[F_0]\right\Vert_{2,\alpha}\cdot\left(\left|E[F_0](0,f^{(1)}) \right|_{C^{2,\alpha}}+\left|E[F_0](0,f^{(2)}) \right|_{C^{2,\alpha}} \right)&\leq \vartheta
\end{split}
\end{align}
then, the mappings $v^{(1)},v^{(2)}\in C^{\infty}(\overline{B_1(0)},\mathbb{R}^q)$ from Lemma \ref{lem:5.13} which solve
\begin{align*}
v^{(1)}=\Phi[F_0,a,f^{(1)}](v^{(1)})\hspace{0.25cm}\text{and}\hspace{0.25cm}v^{(2)}=\Phi[F_0,a,f^{(2)}](v^{(2)})
\end{align*}
satisfy the estimate
\begin{equation}\label{eq:7.5}
\left|v^{(1)}-v^{(2)}\right|_{C^{2,\alpha}}\leq \left|E[F_0](0,f^{(1)}-f^{(2)}) \right|_{C^{2,\alpha}}
\end{equation}
\end{lem}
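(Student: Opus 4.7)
The plan is to compare the two fixed-point equations directly, use the linearity of $E[F_0]$ to split the difference into a ``nonlinear part'' involving $P[a]$, $Q[a]$ and a ``source part'' involving $f^{(1)}-f^{(2)}$, and then absorb the nonlinear part into the left-hand side by choosing $\vartheta$ small.

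More concretely, I would first write
\begin{equation*}
v^{(1)}-v^{(2)}=\Phi[F_0,a,f^{(1)}](v^{(1)})-\Phi[F_0,a,f^{(2)}](v^{(2)}),
\end{equation*}
expand using the definition \eqref{eq:5.18} and exploit the linearity of $E[F_0]$ in its arguments to obtain
\begin{equation*}
v^{(1)}-v^{(2)}=-E[F_0]\!\left(P[a](v^{(1)})-P[a](v^{(2)}),-\tfrac{1}{2}(Q[a](v^{(1)})-Q[a](v^{(2)}))\right)-E[F_0]\!\left(0,\tfrac{1}{2}(f^{(1)}-f^{(2)})\right).
\end{equation*}
Taking $C^{2,\alpha}$-norms and applying the operator norm bound \eqref{eq:5.46} in the form $|E[F_0](h,g)|_{C^{2,\alpha}}\leq \left\Vert E[F_0]\right\Vert_{2,\alpha}(|h|_{C^{2,\alpha}}+|g|_{C^{2,\alpha}})$ to the first term only, leaves the second term as $\tfrac{1}{2}|E[F_0](0,f^{(1)}-f^{(2)})|_{C^{2,\alpha}}$, which is exactly the quantity we want on the right-hand side of \eqref{eq:7.5}.

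For the first, ``nonlinear'' term I would apply the Lipschitz-type estimate \eqref{eq:5.37} to bound
\begin{equation*}
|P[a](v^{(1)})-P[a](v^{(2)})|_{C^{2,\alpha}}+|Q[a](v^{(1)})-Q[a](v^{(2)})|_{C^{2,\alpha}}\leq K(\alpha,a)\left(|v^{(1)}|_{C^{2,\alpha}}+|v^{(2)}|_{C^{2,\alpha}}\right)|v^{(1)}-v^{(2)}|_{C^{2,\alpha}},
\end{equation*}
and then invoke the a priori bound $|v^{(i)}|_{C^{2,\alpha}}\leq |E[F_0](0,f^{(i)})|_{C^{2,\alpha}}$ furnished by Lemma \ref{lem:5.13}. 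This turns the coefficient in front of $|v^{(1)}-v^{(2)}|_{C^{2,\alpha}}$ into $K(\alpha,a)\,\left\Vert E[F_0]\right\Vert_{2,\alpha}(|E[F_0](0,f^{(1)})|_{C^{2,\alpha}}+|E[F_0](0,f^{(2)})|_{C^{2,\alpha}})$, which by the hypothesis \eqref{eq:7.3} is controlled by $K(\alpha,a)\,\vartheta$.

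The final step is to \emph{choose} $\vartheta(\alpha,a)>0$ small enough that this coefficient is at most $\tfrac{1}{2}$ (and, without loss of generality, at most the $\vartheta$ of Lemma \ref{lem:5.13}, so that $v^{(1)},v^{(2)}$ indeed exist). Absorbing the $\tfrac{1}{2}|v^{(1)}-v^{(2)}|_{C^{2,\alpha}}$ term into the left-hand side and multiplying by $2$ yields exactly the desired estimate \eqref{eq:7.5}. I do not anticipate a genuine obstacle here: the argument is essentially the standard contraction-mapping comparison, and the only care needed is to ensure that the smallness threshold $\vartheta$ is taken as the minimum of the one required for existence in Lemma \ref{lem:5.13} and the one required to make the nonlinear coefficient at most $\tfrac{1}{2}$.
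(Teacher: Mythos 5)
Your proof is correct and uses exactly the same ingredients as the paper's: the Lipschitz estimate \eqref{eq:5.37}, the a priori bound $|v^{(i)}|_{C^{2,\alpha}}\leq |E[F_0](0,f^{(i)})|_{C^{2,\alpha}}$ from Lemma \ref{lem:5.13}, and a smallness condition making the contraction coefficient at most $\tfrac12$. The only cosmetic difference is that the paper derives the recursive inequality for the approximating sequences $v_k^{(1)}-v_k^{(2)}$ from \eqref{eq:5.47} and invokes Lemma \ref{lem:7.8}, whereas you work directly with the fixed points and absorb the $\tfrac12|v^{(1)}-v^{(2)}|_{C^{2,\alpha}}$ term; with $v_0^{(1)}=v_0^{(2)}=0$ these two arguments are equivalent.
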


\begin{proof}
Let $(v^{(1)}_k)_{k\in\mathbb{N}}, (v^{(2)}_k)_{k\in\mathbb{N}}\subset C^{\infty}(\overline{B_1(0)},\mathbb{R}^q)$ be the approximations from \eqref{eq:5.47}, i.e.:
\begin{align*}
v^{(j)}_k=
\begin{cases}
0 & \text{if }  k=0\\
\Phi[F_0,a,f^{(j)}](v^{(1)}_{k-1})  &\text{if } k\geq 1
\end{cases}
\end{align*}
Then for all $k\geq 1$ we have
\begin{align*}
&\left|v^{(1)}_k-v^{(2)}_k\right|_{C^{2,\alpha}}=\left|\Phi[F_0,a,f^{(1)}](v^{(1)}_{k-1})-\Phi[F_0,a,f^{(2)}](v^{(2)}_{k-1})\right|_{C^{2,\alpha}}\\
\stackrel{\eqref{eq:5.18}}{\leq} & \left\Vert E[F_0]\right\Vert_{2,\alpha}\cdot  \left|P[a](v^{(1)}_{k-1})-P[a](v^{(1)}_{k-1}) \right|_{C^{2,\alpha}}+\left\Vert E[F_0]\right\Vert_{2,\alpha}\cdot \left|Q[a](v^{(1)}_{k-1})-Q[a](v^{(2)}_{k-1}) \right|_{C^{2,\alpha}}\\
&\hspace{3cm}+\frac{1}{2}  \left|E[F_0](0,f^{(1)}-f^{(2)})\right|_{C^{2,\alpha}}\\
\stackrel{\eqref{eq:5.37}}{\leq} & K(\alpha,a)\cdot\left\Vert E[F_0]\right\Vert_{2,\alpha}\cdot \left(|v^{(1)}_{k-1} |_{C^{2,\alpha}}+|v^{(2)}_{k-1} |_{C^{2,\alpha}}\right)\cdot|v^{(1)}_{k-1}-v^{(2)}_{k-1} |_{C^{2,\alpha}}\\
&\hspace{3cm}+\frac{1}{2}  \left|E[F_0](0,f^{(1)}-f^{(2)})\right|_{C^{2,\alpha}}\\
\stackrel{\eqref{eq:5.48}}{\leq} & K(\alpha,a)\cdot\left\Vert E[F_0]\right\Vert_{2,\alpha} \left(\left|E[F_0](0,f^{(1)}) \right|_{C^{2,\alpha}}+\left|E[F_0](0,f^{(2)}) \right|_{C^{2,\alpha}}\right)\cdot|v^{(1)}_{k-1}-v^{(2)}_{k-1}|_{C^{2,\alpha}}\\
&\hspace{3cm}+\frac{1}{2}  \left|E[F_0](0,f^{(1)}-f^{(2)})\right|_{C^{2,\alpha}}\\
\stackrel{\eqref{eq:7.3}}{\leq}& \frac{1}{2}\left(|v^{(1)}_{k-1}-v^{(2)}_{k-1} |_{C^{2,\alpha}}+ \left|E[F_0](0,f^{(1)}-f^{(2)})\right|_{C^{2,\alpha}} \right)
\end{align*}
Since $v^{(1)}_0=0=v^{(2)}_0$ we obtain the desired estimate.
\end{proof}

The following result shows the existence of a family of isometric embeddings that depends continuously on the time parameter, in the local context.


\begin{theorem}\label{satz:7.1}
Let $M$ be a smooth closed $n$-dimensional manifold and let $(g(\cdot,t))_{t\in[0,T]}$ be a smooth family of Riemannian metrics on $M$. Given a free isometric
embedding $F_0\in C^{\infty}(M,\mathbb{R}^q)$ of the Riemannian manifold $(M,g(\cdot,0))$, then for all $x\in M$ there exists an open neighborhood $U_x\subset M$ of $x$, a time $T_x\in (0,T]$ and a family of
free immersions $(F(\cdot,t))_{t\in [0,T_x]}\subset C^{\infty}(\overline{U}_x,\mathbb{R}^q)$ with $F\in C^{0}(\overline{U}_x \times[0,T_x],\mathbb{R}^q)$
and $F(\cdot,0)=F_0$ so that for all $t\in[0,T_x]$ the equality
\begin{equation*}
F(\cdot,t)^{\ast}(\delta)=g(\cdot,t)
\end{equation*}
holds on $U_x$.
\end{theorem}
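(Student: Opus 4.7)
The plan is to localize the problem in a chart around $x$, invoke the local perturbation construction of Lemma \ref{lem:5.13} pointwise in $t$, and use the stability estimate of Lemma \ref{lem:7.1} to promote the pointwise-in-$t$ existence to joint continuity on $\overline{U}_x\times[0,T_x]$.

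First I would fix a smooth chart $\varphi\colon U\to V$ around $x$ with $\varphi(x)=0$ and $\overline{B_1(0)}\subset V$, and pull back everything to $\overline{B_1(0)}$: set $\tilde F_0:=F_0\circ\varphi^{-1}\in C^\infty(\overline{B_1(0)},\mathbb{R}^q)$, which is a free immersion with $\tilde F_0^{\ast}(\delta)=\tilde g(\cdot,0)$, and let $\tilde g(\cdot,t)$ denote the pulled-back metric coefficients. Choose open sets $U_1,U_2\subset B_1(0)$ around $0$ with $\overline{U}_1\subset U_2$, $\overline{U}_2\subset B_1(0)$, a cutoff $\chi\in C_0^\infty(B_1(0))$ with $\chi\equiv 1$ on an open neighborhood $W$ of $0$ and $\mathrm{supp}(\chi)\subset U_1$, and $a\in C_0^\infty(B_1(0))$ with $a\equiv 1$ on $\mathrm{supp}(\chi)$, so that $a^2\chi=\chi$. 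Define the time-dependent data
\begin{equation*}
\hat f_{ij}(\cdot,t):=\chi\cdot\bigl(\tilde g_{ij}(\cdot,t)-\tilde g_{ij}(\cdot,0)\bigr),\qquad 1\leq i\leq j\leq n,
\end{equation*}
which is smooth in $(x,t)$, compactly supported in $U_1$ for every $t$, and vanishes identically at $t=0$.

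Since $\hat f(\cdot,t)\to 0$ in $C^{2,\alpha}(\overline{B_1(0)})$ as $t\to 0$, the continuity estimate \eqref{eq:5.46} of Lemma \ref{lem:5.a} gives $\left|E[\tilde F_0](0,\hat f(\cdot,t))\right|_{C^{2,\alpha}}\to 0$. I would therefore choose $T_x\in(0,T]$ small enough that for all $t_1,t_2\in[0,T_x]$
\begin{equation*}
\left\Vert E[\tilde F_0]\right\Vert_{2,\alpha}\cdot\left(\left|E[\tilde F_0](0,\hat f(\cdot,t_1))\right|_{C^{2,\alpha}}+\left|E[\tilde F_0](0,\hat f(\cdot,t_2))\right|_{C^{2,\alpha}}\right)\leq\vartheta,
\end{equation*}
where $\vartheta$ is the smaller of the two thresholds appearing in Lemmas \ref{lem:5.13} and \ref{lem:7.1}. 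For each such $t$, Lemma \ref{lem:5.13} then produces a fixed-point $v(\cdot,t)\in C^\infty(\overline{B_1(0)},\mathbb{R}^q)$ of $\Phi[\tilde F_0,a,\hat f(\cdot,t)]$ satisfying $|v(\cdot,t)|_{C^{2,\alpha}}\leq |E[\tilde F_0](0,\hat f(\cdot,t))|_{C^{2,\alpha}}$. Setting $\tilde F(\cdot,t):=\tilde F_0+a^2 v(\cdot,t)$, Lemma \ref{lem:5.3} combined with the identity $a^2\hat f=\hat f$ yields
\begin{equation*}
\partial_i\tilde F(\cdot,t)\cdot\partial_j\tilde F(\cdot,t)=\partial_i\tilde F_0\cdot\partial_j\tilde F_0+\hat f_{ij}(\cdot,t),
\end{equation*}
whose right-hand side coincides with $\tilde g_{ij}(\cdot,t)$ on the set $W$ where $\chi\equiv 1$. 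At $t=0$, $\hat f(\cdot,0)=0$ and the fixed-point iteration starting from $v_0=0$ remains stationary, so $v(\cdot,0)=0$ and $\tilde F(\cdot,0)=\tilde F_0$.

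For the continuity in $t$, I would apply Lemma \ref{lem:7.1} to the pair $\hat f(\cdot,t_1),\hat f(\cdot,t_2)$, whose combined energies lie below $\vartheta$ by the choice of $T_x$, to obtain
\begin{equation*}
\left|v(\cdot,t_1)-v(\cdot,t_2)\right|_{C^{2,\alpha}}\leq \left|E[\tilde F_0]\bigl(0,\hat f(\cdot,t_1)-\hat f(\cdot,t_2)\bigr)\right|_{C^{2,\alpha}}\to 0\text{ as }t_1\to t_2,
\end{equation*}
using smoothness of $t\mapsto \hat f(\cdot,t)$ into $C^{2,\alpha}$ together with \eqref{eq:5.46}. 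The embedding $C^{2,\alpha}\hookrightarrow C^0$ then gives $\tilde F\in C^0(\overline{B_1(0)}\times[0,T_x],\mathbb{R}^q)$, and freeness of $\tilde F(\cdot,t)$ persists because it is an open condition on the $2$-jet and $a^2 v(\cdot,t)$ is uniformly small in $C^{2,\alpha}$ after possibly shrinking $T_x$. Setting $U_x:=\varphi^{-1}(W)$ and transporting $\tilde F$ back to $M$ via $\varphi$ finishes the construction. The main obstacle is coordinating the two cutoffs $\chi$ and $a$: one needs $\mathrm{supp}(\hat f(\cdot,t))\subset U_1$ uniformly in $t$ so that Lemma \ref{lem:5.13} applies, and simultaneously $a^2\hat f=\hat f$ so that Lemma \ref{lem:5.3} delivers exactly the targeted correction $\hat f_{ij}$ on $W$; in addition, Lemma \ref{lem:7.1} demands smallness of the \emph{sum} of the energies at any two times, which is precisely the point at which the time-dependent stability built into G\"unther's scheme is genuinely used.
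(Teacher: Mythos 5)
Your argument matches the paper's proof of Theorem \ref{satz:7.1} essentially step for step: localize in a chart around $x$, cut off the time increment of the metric with a bump function $\chi$ (the paper calls it $\psi$), pick $a\in C_0^\infty(B_1(0))$ with $a^2\hat f=\hat f$, shrink $T_x$ so that the $C^{2,\alpha}$-energy of $E[F_0](0,\hat f(\cdot,t))$ lies below the threshold from Lemma \ref{lem:5.13} and Lemma \ref{lem:7.1}, invoke Lemma \ref{lem:5.13} pointwise in $t$ to produce the fixed point, and use the stability estimate \eqref{eq:7.5} together with \eqref{eq:5.46} to upgrade to joint continuity. Your extra remark that freeness persists by openness of the $2$-jet condition and uniform $C^{2,\alpha}$-smallness of $a^2 v(\cdot,t)$ is a point the paper leaves implicit but is a sensible and correct addition.
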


\begin{proof}
Let $F_0\in C^{\infty}(M,\mathbb{R}^q)$ be a free isometric embedding of  $(M,g(\cdot,0))$.
Fix $x\in M$ and a smooth chart $\varphi: U\longrightarrow B_{1}(0)$ with $\varphi(x)=0$ as
well as $\psi\in C^{\infty}(B_1(0))$ so that $\left. \psi\right|_{\overline{B_{1/2}(0)}}= 1$
and $supp(\psi)\subset B_{3/4}(0)$. The mapping $\left. F_0\circ \varphi^{-1} \right|_{B_1(0)}\in C^{\infty}(\overline{B_1(0)},\mathbb{R}^q)$ will be also denoted by $F_0$. We define $\widehat{g}\in C^{\infty}(\overline{B_1(0)},\mathbb{R}^{\frac{n}{2}(n+1)})$ as
\begin{equation}\label{eq:7.17}
\widehat{g}(x,t):=(\widehat{g}_{ij}(x,t))_{1\leq i\leq j\leq n}=\psi(x)\cdot ( \,^{\varphi}g_{ij}(x,t)-\,^{\varphi}g_{ij}(x,0))_{1\leq i\leq j\leq n}
\end{equation}
Choose $a\in C^{\infty}_0(B_1(0))$ with $\left. a\right|_{B_{3/4}(0)}= 1$ so that $a^2\, \widehat{g}= \widehat{g}$ and let $T_x\in (0,T]$ so that for all $t\in [0,T_x]$
\begin{equation*}
\left\Vert E[F_0]\right\Vert_{2,\alpha}\left|E[F_0](0,\widehat{g}(\cdot,t)) \right|_{C^{2,\alpha}}\leq \vartheta(\alpha,a)
\end{equation*}
where $\vartheta>0$ is chosen according to Theorem \ref{satz:5.1} and Lemma \ref{lem:7.1}. Then, for all $t\in [0,T_x]$ there exists $v(\cdot,t)\in C^{\infty}(\overline{B_1(0)},\mathbb{R}^q)$ so that $u(\cdot,t):= a^2\, v(\cdot,t)\in C^{\infty}_0(B_1(0),\mathbb{R}^q) $ solves
\begin{align}\label{eq:7.16}
\begin{split}
&\partial_i (F_0(x)+u(x,t))\cdot \partial_j (F_0(x)+u(x,t))\stackrel{\hphantom{\eqref{eq:7.17}}}{=}\partial_i F_0(x)\cdot \partial_j F_0(x)+\widehat{g}_{ij}(x,t)\\
\stackrel{\eqref{eq:7.17}}{=}&\,^{\varphi}g_{ij}(x,0)+\psi(x)\cdot (\,^{\varphi}g_{ij}(x,t)-\,^{\varphi}g_{ij}(x,0))
\end{split}
\end{align}
for all $(x,t)\in B_1(0)\times [0,T_x]$ and $1\leq i\leq j\leq n$. 

For later analysis we mention that for each $t\in[0,T_x]$ the mapping $v(\cdot,t)\in C^{\infty}(\overline{B_1(0)},\mathbb{R}^q)$
is the $C^{2,\alpha}$-limit of the sequence $(v_k(\cdot,t))_{k\in\mathbb{N}}\subset C^{\infty}(\overline{B_1(0)},\mathbb{R}^q)$ where
\begin{align}\label{eq:7.15}
v_k(\cdot,t)=
\begin{cases}
0 & \text{if }  k=0\\
\Phi[F_0,a,\widehat{g}(\cdot,t)](v_{k-1}(\cdot,t))  &\text{if } k\geq 1
\end{cases}
\end{align}
which is stated in \eqref{eq:5.47}. Using \eqref{eq:5.48} we have the estimate:
\begin{equation}\label{eq:7.18}
\left|v_k(\cdot,t) \right|_{C^{2,\alpha}}\leq\left|E[F_0]\left(0,\widehat{g}(\cdot,t)\right)\right|_{C^{2,\alpha}}
\end{equation}
for all $t\in [0,T_x]$. Since $\left. \psi\right|_{\overline{B_{1/2}(0)}}= 1$ we may define $U_x:= \varphi^{-1}(B_{1/2}(0))$ and $(F(\cdot,t))_{t\in[0,T]}\subset C^{\infty}(U_x,\mathbb{R}^q)$ as follows
\begin{equation*}
F(y,t)=F_0(y)+u(\varphi(y),t) 
\end{equation*}
for all $y\in U_x$. The claim follows from \eqref{eq:7.16} and \eqref{eq:7.5}.
\end{proof}


\subsection{Regularity of the local solution}
It is our aim to prove \textit{maximal regularity} with respect to the time parameter. In order to examine the regularity of the local solution in Section \ref{sec:7.1} we analyze the approximations in \eqref{eq:7.15}.
In this section we adapt the estimates in Section \ref{sec:5.4} to the time-dependent situation.
The following Lemma adjusts Lemma \ref{eq:5.31} to the time-dependent situation.
As it turns out, the fact that $N_i[a]$ is ``quadratic'' with respect to $v$ (cf. \eqref{eq:5.2}), plays an important role, here. 

\begin{lem}\label{lem:7.2}
Let $a\in C_0^{\infty}(B_1(0))$ be fixed, then for all $v_1,v_2\in C^{\infty}(\overline{{B_1(0)}}\times [0,T],\mathbb{R}^q)$ we have the estimate
\begin{align}\label{eq:7.6}
\begin{split}
&| N_i[a](v_1(\cdot,t) )-N_i[a](v_2(\cdot,t)) |_{C^{0,\alpha}} \\
\leq &K(\alpha,a)\cdot \left(|v_1(\cdot,t) |_{C^{2,\alpha}}+|v_2(\cdot,t) |_{C^{2,\alpha}} \right)\cdot|v_1(\cdot,t)-v_2(\cdot,t) |_{C^{2,\alpha}}
\end{split}
\end{align}
for all $t\in[0,T]$. If $v\in C^{\infty}(\overline{{B_1(0)}}\times [0,T],\mathbb{R}^q)$ and $m,r\in\mathbb{N}$ then
\begin{align}\label{eq:7.7}
\begin{split}
|\partial_t^r N_i[a](v(\cdot,t)) |_{C^{m,\alpha}}\leq & K(\alpha,a)\cdot\sum_{s=0}^r{\binom{r}{s}|\partial_t^s  v(\cdot,t) |_{C^{m+2,\alpha}}|\partial_t^{r-s} v(\cdot,t) |_{C^{2,\alpha}}}\\
&+C(m,\alpha,a)\cdot\sum_{s=0}^r{\binom{r}{s}|\partial_t^s v(\cdot,t)|_{C^{m+1,\alpha}}}|\partial_t^{r-s} v(\cdot,t) |_{C^{m+1,\alpha}}
\end{split}
\end{align}
for all $t\in[0,T]$, $1\leq i\leq n$.
\end{lem}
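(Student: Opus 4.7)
The plan is to exploit the fact that the time parameter enters $N_i[a]$ only through $v$, since the coefficient $a\in C^\infty_0(B_1(0))$ is time-independent. Both estimates therefore reduce to algebraic manipulations of the defining formula
\begin{equation*}
N_i[a](v)=2\partial_i a\,\Delta v\cdot v+a\,\Delta v\cdot\partial_i v
\end{equation*}
combined with standard tame product estimates in Hölder spaces, namely $|fg|_{C^{0,\alpha}}\leq C|f|_{C^{0,\alpha}}|g|_{C^{0,\alpha}}$ for the first estimate, and
\begin{equation*}
|fg|_{C^{m,\alpha}}\leq K\bigl(|f|_{C^{m,\alpha}}|g|_{C^{0,\alpha}}+|f|_{C^{0,\alpha}}|g|_{C^{m,\alpha}}\bigr)+C(m,\alpha)|f|_{C^{m-1,\alpha}}|g|_{C^{m-1,\alpha}}
\end{equation*}
for the higher-order estimate (the last term coming from the intermediate distributions of derivatives via Gagliardo--Nirenberg-type interpolation).

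For \eqref{eq:7.6} I will freeze $t\in[0,T]$ and observe that $N_i[a]$ is quadratic in $v$. Applying the elementary bilinearity identity $A\cdot B-C\cdot D=(A-C)\cdot B+C\cdot(B-D)$ to each of the two summands of $N_i[a](v_1(\cdot,t))-N_i[a](v_2(\cdot,t))$, I obtain a sum of terms in which exactly one factor carries $v_1-v_2$ (or one of its derivatives up to order two). Each such factor has $C^{0,\alpha}$-norm bounded by $|v_1(\cdot,t)-v_2(\cdot,t)|_{C^{2,\alpha}}$, while the complementary factor has $C^{0,\alpha}$-norm bounded by $|v_j(\cdot,t)|_{C^{2,\alpha}}$; absorbing $\partial_i a$ and $a$ into a constant produces the required $K(\alpha,a)$.

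For \eqref{eq:7.7} the principal tool is Leibniz's rule in $t$. Since $a$ is $t$-independent,
\begin{equation*}
\partial_t^r N_i[a](v(\cdot,t))=2\partial_i a\sum_{s=0}^r\binom{r}{s}\Delta\partial_t^s v\cdot\partial_t^{r-s}v+a\sum_{s=0}^r\binom{r}{s}\Delta\partial_t^s v\cdot\partial_i\partial_t^{r-s}v,
\end{equation*}
and I will apply the tame $C^{m,\alpha}$-product estimate to each summand. The endpoint contributions, in which either $\Delta\partial_t^s v$ absorbs all $m$ spatial derivatives or the other factor does, are bounded by $|\partial_t^s v|_{C^{m+2,\alpha}}|\partial_t^{r-s}v|_{C^{2,\alpha}}$ after using $|\cdot|_{C^{0,\alpha}}\leq|\cdot|_{C^{2,\alpha}}$ on the small factor; the intermediate contributions, in which $m$ derivatives are distributed across both factors, are of the form $|\partial_t^s v|_{C^{m+1,\alpha}}|\partial_t^{r-s}v|_{C^{m+1,\alpha}}$ (again after passing from $\partial_i$ or $\Delta$ to the full $C^{m+1,\alpha}$-norm), and get absorbed into $C(m,\alpha,a)$.

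The main obstacle I anticipate is not analytical but organizational: one must verify that the binomial coefficients $\binom{r}{s}$ survive the chain of product estimates without being degraded, and that the derivative count on each factor is correctly matched to the two claimed sums. A clean way to handle this is to treat each $s\in\{0,\dots,r\}$ separately and apply the product estimate only once per value of $s$, so that the combinatorics inherited from Leibniz in $t$ is preserved exactly; the distribution of the $m$ spatial derivatives is then a separate, purely algebraic step inside each summand.
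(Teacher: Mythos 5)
Your proposal is correct and follows essentially the same route as the paper: for \eqref{eq:7.6} the bilinear splitting $A\cdot B-C\cdot D=(A-C)\cdot B+C\cdot(B-D)$ followed by the elementary product and embedding inequalities in H\"older spaces, and for \eqref{eq:7.7} Leibniz in $t$ (using that $a$ is time-independent) followed by the tame $C^{m,\alpha}$-product estimate \eqref{eq:A.15} and \eqref{eq:A.13} to absorb $a$ and $\partial_i a$. One minor point: the lower-order term $C(m,\alpha)|f|_{C^{m-1,\alpha}}|g|_{C^{m-1,\alpha}}$ in the tame product estimate does not come from a Gagliardo--Nirenberg interpolation as you suggest; it arises directly from the spatial Leibniz expansion \eqref{eq:A.7}, where the non-extremal multi-indices $0<\gamma<\beta$ place at most $m-1$ derivatives on each factor.
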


For the sake of readability we sometimes suppress the time-dependency in our notation. 

\begin{proof}
\begin{align*}
&|N_i[a](v_1) -N_i[a](v_2) |_{C^{0,\alpha}}\\
\stackrel{\eqref{eq:5.2}}{\leq}&|2\partial_i a\, (\Delta v_1\cdot v_1-\Delta v_2\cdot v_2)|_{C^{0,\alpha}}+|a\,(\Delta v_1\cdot\partial_i v_1 -\Delta v_2\cdot\partial_i v_2)|_{C^{0,\alpha}}\\
\stackrel{\eqref{eq:A.2}}{\leq} &K_1(\alpha,a)\cdot| \Delta (v_1-v_2) \cdot v_1|_{C^{0,\alpha}}+K_1(\alpha,a)\cdot|\Delta v_2\cdot (v_1-v_2)|_{C^{0,\alpha}}\\
&+K_1(\alpha,a)\cdot|\Delta (v_1-v_2)\cdot\partial_i v_1|_{C^{0,\alpha}} +K_1(\alpha,a)\cdot|\Delta v_2\cdot\partial_i(v_1- v_2)|_{C^{0,\alpha}}\\
\stackrel{\eqref{eq:A.16}}{\leq}&K_1(\alpha,a)\cdot| \Delta (v_1-v_2)|_{C^{0,\alpha}} \cdot |v_1|_{C^{0,\alpha}}+K_1(\alpha,a)\cdot|\Delta v_2|_{C^{0,\alpha}}\cdot |v_1-v_2|_{C^{0,\alpha}}\\
&+K_1(\alpha,a)\cdot|\Delta (v_1-v_2)|_{C^{0,\alpha}}\cdot|\partial_i v_1|_{C^{0,\alpha}}+K_1(\alpha,a)\cdot|\Delta v_2|_{C^{0,\alpha}}\cdot|\partial_i(v_1- v_2)|_{C^{0,\alpha}}\\
\stackrel{\eqref{eq:A.12}}{\leq}& K(\alpha,a)\cdot \left(|v_1 |_{C^{2,\alpha}}+|v_2 |_{C^{2,\alpha}} \right)|v_1-v_2 |_{C^{2,\alpha}}
\end{align*}

We prove \eqref{eq:7.7} for all $m\geq 1$ and $r\in\mathbb{N}$:
\begin{align*}
&|\partial_t^r N_i[a](v)|_{C^{m,\alpha}}\\
\stackrel{\eqref{eq:5.2}}{=}&|2\partial_i a\, \partial_t^r(\Delta v\cdot  v)+a \partial_t^r(\Delta v\cdot \partial_i v)|_{C^{m,\alpha}}\\
\stackrel{\eqref{eq:A.7}}{\leq}  &\sum_{s=0}^r \binom{r}{s} |2\partial_i a\, \Delta \partial^{s}_t v \cdot \partial^{r-s}_t v|_{C^{m,\alpha}}+\sum_{s=0}^r \binom{r}{s}|a\,\Delta \partial^{s}_t v \cdot \partial_i \partial^{r-s}_t v|_{C^{m,\alpha}}\\
\stackrel{\eqref{eq:A.15}}{\leq}&\sum_{s=0}^r \binom{r}{s}|2\partial_i a\, \Delta \partial^{s}_t v|_{C^{0,\alpha}} |\partial^{r-s}_t v|_{C^{m,\alpha}}+\sum_{s=0}^r \binom{r}{s}|2\partial_i a\, \Delta \partial^{s}_t v|_{C^{m,\alpha}}|\partial^{r-s}_t v|_{C^{0,\alpha}}\\
&+C_1(m,\alpha)\sum_{s=0}^r \binom{r}{s}|2\partial_i a\, \Delta \partial^{s}_t v|_{C^{m-1,\alpha}} |\partial^{r-s}_t v|_{C^{m-1,\alpha}}\\
+&\sum_{s=0}^r \binom{r}{s} |a\,\Delta \partial^{s}_t v|_{C^{0,\alpha}}|\partial_i \partial^{r-s}_t  v|_{C^{m,\alpha}}+\sum_{s=0}^r \binom{r}{s}|a\,\Delta \partial^{s}_t v|_{C^{m,\alpha}}|\partial_i \partial^{r-s}_t v|_{C^{0,\alpha}}\\
&+C_1(m,\alpha) \sum_{s=0}^r \binom{r}{s} |a\,\Delta \partial^{s}_t v|_{C^{m-1,\alpha}} |\partial_i \partial^{r-s}_t v|_{C^{m-1,\alpha}}\\
\stackrel{\eqref{eq:A.13}}{\leq} &K(\alpha,a)\cdot\sum_{s=0}^r{\binom{r}{s}|\partial_t^s  v |_{C^{m+2,\alpha}}|\partial_t^{r-s} v |_{C^{2,\alpha}}}+C(m,\alpha,a)\cdot\sum_{s=0}^r{\binom{r}{s}|\partial_t^s v|_{C^{m+1,\alpha}}|\partial_t^{r-s} v |_{C^{m+1,\alpha}}}
\end{align*}
\end{proof}

If $v\in C^{\infty}(\overline{{B_1(0)}}\times [0,T],\mathbb{R}^q)$ then  $N_i[a](v)\in C^{\infty}(\overline{{B_1(0)}}\times [0,T])$ which is a consequence
of \eqref{eq:5.2}. The same statement is true if we replace $N_i[a](v)$ by the $R_{ij}[a]$-part of $M_{ij}[a]$ (cf. Lemma \ref{lem:5.2}).
To derive an analogous  statement for the whole operator $M_{ij}[a]$ we need to consider the $L_{ij}[a]$-part (cf. Lemma \ref{lem:5.1}).
This operator has the feature that the inverse of the Laplace operator occurs. 
Using \textit{difference quotients} (cf. \cite[p. 168]{gilbarg2001elliptic}) and the Sobolev embedding theorem (cf. \cite[Theorem 7.10]{gilbarg2001elliptic}) we conclude the following result which implies that  $M_{ij}[a](v)\in C^{\infty}(\overline{{B_1(0)}}\times [0,T])$.
\begin{lem}\label{lem:7.3}
Given $f\in C^{\infty}(\overline{{B_1(0)}}\times [0,T])$. Let 
u: $B_1(0)\times [0,T]\longrightarrow \mathbb{R}$ such that for all $t\in[0,T]$ the function $u(\cdot,t)\in C^{\infty}(\overline{B_1(0)})$ is the unique solution to:
\begin{align*}
\begin{cases}
\Delta u(\cdot,t)=f(\cdot,t) &\text{on }B_1(0)\\
\hphantom{\Delta} u(\cdot,t)=0 &\text{on }\partial B_1(0)
\end{cases}
\end{align*}
then  $u\in C^{\infty}(\overline{{B_1(0)}}\times [0,T])$.
\end{lem}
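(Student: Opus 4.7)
For each fixed $t \in [0,T]$ the function $u(\cdot,t)$ is the unique solution to the Dirichlet problem for Poisson's equation with smooth right-hand side $f(\cdot,t) \in C^\infty(\overline{B_1(0)})$ and trivial boundary data, hence lies in $C^\infty(\overline{B_1(0)})$ by standard elliptic regularity (e.g.\ iterating \eqref{eq:5.22}). The actual content of the lemma is therefore the \emph{joint} smoothness in $(x,t)$, which reduces to showing that every time derivative $\partial_t^r u$ exists in the classical sense, solves the Dirichlet problem for $\Delta w = \partial_t^r f$ with zero boundary data, and depends continuously on $t$ in every $C^{m,\alpha}(\overline{B_1(0)})$ norm.

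The first step is a difference-quotient argument in the time variable. For $t \in [0,T)$ and $|h|$ small, set $D_h u(\cdot,t) := h^{-1}\bigl(u(\cdot,t+h)-u(\cdot,t)\bigr)$. By linearity of the Dirichlet problem, $D_h u(\cdot,t)$ is the unique solution of $\Delta v = D_h f(\cdot,t)$ with $v = 0$ on $\partial B_1(0)$. Since $f$ is smooth in $t$, $D_h f(\cdot,t) \to \partial_t f(\cdot,t)$ in $C^{m,\alpha}(\overline{B_1(0)})$ for every $m$, and applying \eqref{eq:5.22} to the difference $D_h u - D_{h'} u$ yields
\begin{equation*}
\bigl|D_h u(\cdot,t) - D_{h'} u(\cdot,t)\bigr|_{C^{m+2,\alpha}} \leq C(m,\alpha)\,\bigl|D_h f(\cdot,t) - D_{h'} f(\cdot,t)\bigr|_{C^{m,\alpha}}.
\end{equation*}
Thus $\{D_h u(\cdot,t)\}_h$ is Cauchy in every $C^{m+2,\alpha}$, so it converges to some $w_1(\cdot,t) \in C^\infty(\overline{B_1(0)})$. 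Pointwise convergence identifies $w_1 = \partial_t u$, and passing to the limit in the equation gives $\Delta \partial_t u(\cdot,t) = \partial_t f(\cdot,t)$ with zero Dirichlet data. Iterating on $\partial_t^r f \in C^\infty$ produces all time derivatives $\partial_t^r u$, each smooth in $x$ for every fixed $t$ and satisfying the analogous Poisson problem.

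To upgrade to $u \in C^\infty(\overline{B_1(0)}\times[0,T])$, I would establish joint continuity of every mixed partial $\partial_t^r \partial^s_x u$ on $\overline{B_1(0)}\times [0,T]$. Because $f$ is smooth in $(x,t)$, the map $t \mapsto \partial_t^r f(\cdot,t)$ is continuous into $C^{m,\alpha}(\overline{B_1(0)})$ for every $m$; applying \eqref{eq:5.22} to $\partial_t^r u(\cdot,t) - \partial_t^r u(\cdot,t_0)$ transfers this to continuity of $t \mapsto \partial_t^r u(\cdot,t)$ into $C^{m+2,\alpha}$, and therefore into $C^k(\overline{B_1(0)})$ for every integer $k$. (If one prefers to stay closer to the hint, one can route the argument through $L^p$-difference-quotient bounds for Poisson's equation and then invoke the Sobolev embedding theorem to pass back to $C^k$-estimates, cf.\ \cite[p.\ 168 and Theorem 7.10]{gilbarg2001elliptic}.) The resulting uniform-in-$t$ control of every spatial derivative yields joint continuity of $\partial^s_x \partial_t^r u$ on $\overline{B_1(0)}\times[0,T]$ for all $r,s$, which is the assertion.

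The only real obstacle is organizational: one needs the elliptic estimate \eqref{eq:5.22} at \emph{every} order $m$ simultaneously so that the difference-quotient limit transfers the full spatial smoothness of $u$ to its $t$-derivatives. No nonlinear term is present, so beyond setting up the differentiability step and bookkeeping norms in the iteration on $r$, no substantial technical difficulty arises.
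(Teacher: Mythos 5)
Your proof is correct and follows exactly the route the paper indicates in its one-line hint (difference quotients in the time variable combined with elliptic estimates); the paper itself gives no further details. The only cosmetic difference is that you run the difference-quotient argument directly through the Schauder estimate \eqref{eq:5.22} rather than through $L^p$ bounds and the Sobolev embedding theorem as the paper's citations suggest, but you note the alternative yourself and the two are interchangeable here.
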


The following Lemma shows that the operator $M_{ij}[a]$ has the same properties as the operator $N_{i}[a]$ in Lemma \ref{lem:7.2}. Since, the $R_{ij}[a]$-part in 
$M_{ij}[a]$ is ``quadratic'' with respect to $v$, this statement seems to be evident, if we just consider this part of the operator.
The $L_{ij}[a]$-part needs to be considered differently.

\begin{lem}\label{lem:7.4}
Let $a\in C_0^{\infty}(B_1(0))$ be fixed, then for all $v_1,v_2\in C^{\infty}(\overline{{B_1(0)}}\times [0,T],\mathbb{R}^q)$ we have the estimate
\begin{align}\label{eq:7.9}
\begin{split}
&| M_{ij}[a](v_1(\cdot,t) )-M_{ij}[a](v_2(\cdot,t)) |_{C^{0,\alpha}} \\
\leq &K(\alpha,a)\cdot \left(|v_1(\cdot,t) |_{C^{2,\alpha}}+|v_2(\cdot,t) |_{C^{2,\alpha}} \right)\cdot|v_1(\cdot,t)-v_2(\cdot,t) |_{C^{2,\alpha}}
\end{split}
\end{align}
for all $t\in[0,T]$. If $v\in C^{\infty}(\overline{{B_1(0)}}\times [0,T],\mathbb{R}^q)$ and $m,r\in\mathbb{N}$ then
\begin{align}\label{eq:7.10}
\begin{split}
|\partial_t^r M_{ij}[a](v(\cdot,t)) |_{C^{m,\alpha}}\leq &K(\alpha,a)\cdot\sum_{s=0}^r{\binom{r}{s}|\partial_t^s  v(\cdot,t) |_{C^{m+2,\alpha}}|\partial_t^{r-s} v(\cdot,t) |_{C^{2,\alpha}}}\\
&+C(m,\alpha,a)\cdot\sum_{s=0}^r{\binom{r}{s}|\partial_t^s v(\cdot,t)|_{C^{m+1,\alpha}}|\partial_t^{r-s} v(\cdot,t) |_{C^{m+1,\alpha}}}
\end{split}
\end{align}
for all $t\in[0,T]$, $1\leq i\leq j\leq n$.
\end{lem}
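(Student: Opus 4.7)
The strategy is to split $M_{ij}[a]=L_{ij}[a]+R_{ij}[a]$ as in \eqref{eq:5.11} and handle the two pieces separately. The $R_{ij}$-piece, by Lemma \ref{lem:5.2}, is a bilinear expression in the derivatives of $v$ up to order two whose coefficients are polynomials in the derivatives of $a$ up to order four, so the argument is essentially \emph{verbatim} the one carried out for $N_i[a]$ in the proof of Lemma \ref{lem:7.2}: for \eqref{eq:7.9} one telescopes
\begin{equation*}
\partial^{s_3}v_1\cdot\partial^{s_4}v_1-\partial^{s_3}v_2\cdot\partial^{s_4}v_2 = \partial^{s_3}(v_1-v_2)\cdot\partial^{s_4}v_1 + \partial^{s_3}v_2\cdot\partial^{s_4}(v_1-v_2)
\end{equation*}
and applies the same H\"older product inequalities that underlie \eqref{eq:7.6}; for \eqref{eq:7.10} one expands $\partial_t^r(\partial^{s_3}v\cdot\partial^{s_4}v)$ by Leibniz and obtains exactly the two types of terms appearing on the right-hand side of \eqref{eq:7.10}.

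The $L_{ij}$-piece is where the real work lies. By Lemma \ref{lem:5.1},
\begin{equation*}
L_{ij}[a](v)=\sum_{l\in\{i,j\}}\sum_{\substack{|s_1|+|s_2|=3\\ |s_2|\leq 2}} C_{ij}(s_1,s_2)\,\partial^{s_1}a\cdot\partial^{s_2}\!\bigl(\Delta^{-1}N_l[a](v)\bigr).
\end{equation*}
Since $a$ is time-independent and the Dirichlet data for $\Delta^{-1}$ are trivial, differentiating the defining identity $\Delta(\Delta^{-1}N_l[a](v(\cdot,t)))=N_l[a](v(\cdot,t))$ in $t$ and invoking the uniqueness from Lemma \ref{lem:7.3} gives the commutation $\partial_t^r\Delta^{-1}N_l[a](v)=\Delta^{-1}\partial_t^r N_l[a](v)$. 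Using $|s_2|\leq 2$ together with the elliptic estimate \eqref{eq:5.22} yields
\begin{equation*}
\bigl|\partial^{s_2}\Delta^{-1}\partial_t^r N_l[a](v)\bigr|_{C^{m,\alpha}} \leq C(m,\alpha)\cdot\bigl|\partial_t^r N_l[a](v)\bigr|_{C^{m,\alpha}},
\end{equation*}
after which plugging in \eqref{eq:7.7} from Lemma \ref{lem:7.2} and absorbing the bounded number of derivatives of $a$ into the constants produces \eqref{eq:7.10}. For the Lipschitz-type bound \eqref{eq:7.9} the same elliptic estimate gives
\begin{equation*}
\bigl|\partial^{s_2}\Delta^{-1}(N_l[a](v_1)-N_l[a](v_2))\bigr|_{C^{0,\alpha}} \leq C(\alpha)\cdot\bigl|N_l[a](v_1)-N_l[a](v_2)\bigr|_{C^{0,\alpha}},
\end{equation*}
and \eqref{eq:7.6} closes the argument.

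The only genuinely delicate point in this plan is the commutation $\partial_t^r\Delta^{-1}=\Delta^{-1}\partial_t^r$, which rests on the uniqueness of the Dirichlet problem together with the joint space-time smoothness afforded by Lemma \ref{lem:7.3}. Once that is secured, the proof reduces to matching multi-indices against the two-derivative gain provided by \eqref{eq:5.22} and then reusing the $N_i$-estimates of Lemma \ref{lem:7.2} as black boxes. I do not foresee any serious obstacle beyond careful bookkeeping of the Leibniz expansion, the H\"older product bounds, and the $a$-dependent constants.
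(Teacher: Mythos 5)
Your overall architecture is correct: split $M_{ij}[a]=L_{ij}[a]+R_{ij}[a]$, handle $R_{ij}$ by the same telescoping and Leibniz arguments as for $N_i$, commute $\partial_t^r$ through $\Delta^{-1}$, and then invoke elliptic estimates together with \eqref{eq:7.7}. The treatment of $R_{ij}$ and the proof of \eqref{eq:7.9} are fine, since for $m=0$ there is no $m$-dependence to worry about.

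There is, however, a genuine gap in your treatment of the $L_{ij}$-piece for \eqref{eq:7.10}. You write the blanket bound
\begin{equation*}
\bigl|\partial^{s_2}\Delta^{-1}\partial_t^r N_l[a](v)\bigr|_{C^{m,\alpha}} \leq C(m,\alpha)\cdot\bigl|\partial_t^r N_l[a](v)\bigr|_{C^{m,\alpha}}
\end{equation*}
using \eqref{eq:5.22}, and then plug in \eqref{eq:7.7}. This is mathematically valid, but the coefficient $C(m,\alpha)$ then multiplies the entire right-hand side of \eqref{eq:7.7}, including the top-order product $\left|\partial_t^s v\right|_{C^{m+2,\alpha}}\left|\partial_t^{r-s}v\right|_{C^{2,\alpha}}$. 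You therefore obtain \eqref{eq:7.10} with a \emph{leading constant that depends on $m$}, whereas the statement requires the leading constant $K(\alpha,a)$ to be \emph{independent of $m$}. This is not a cosmetic distinction: the $m$-independence is exactly what allows the time-shrinking argument in Theorem \ref{satz:7.2} (and in the proof of Theorem \ref{globeinb}) to produce a single $\widehat{T}_x$ valid for all $m$; if the leading constant grew with $m$, one could not absorb $K\cdot\left\Vert E[F_0]\right\Vert_{2,\alpha}\left|E[F_0](0,\widehat{g})\right|_{C^{2,\alpha}}$ into $\tfrac12$ uniformly in $m$, and the smoothness-in-$t$ iteration would collapse.

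The paper avoids this by splitting cases on $|s_2|$. In the harmless cases $|s_2|\leq 1$, \eqref{eq:5.22} only sees $\left|\Delta^{-1}\partial_t^r N_l[a](v)\right|_{C^{m+1,\alpha}}\leq C(m,\alpha)\left|\partial_t^r N_l[a](v)\right|_{C^{m-1,\alpha}}$, and via \eqref{eq:7.7} (with $m$ replaced by $m-1$) everything lands in the subleading, $m$-dependent term of \eqref{eq:7.10}, so no damage is done. In the critical case $|s_2|=2$, the paper invokes the \emph{improved} elliptic estimate \eqref{eq:5.30} (which is applicable since $N_l[a](v)$ is compactly supported in $B_1(0)$): its leading coefficient $K(\alpha,R)$ is $m$-independent, and only the subleading coefficient $C(m,\alpha,R)$ carries $m$-dependence. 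This is the one place where the structure of the claimed estimate is nontrivial, and it is precisely the step missing from your plan. You should replace the blanket application of \eqref{eq:5.22} by the case distinction in $|s_2|$, using \eqref{eq:5.30} when $|s_2|=2$ and first applying \eqref{eq:A.13} to the product $\partial^{s_1}a\cdot\partial^{s_2}\!\bigl(\Delta^{-1}\partial_t^r N_l[a](v)\bigr)$ rather than folding the $a$-factor casually into the constants.
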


\begin{proof}
The operator $M_{ij}[a]$ is the sum of the operator $L_{ij}[a]$ and the operator $R_{ij}[a]$ (cf. \eqref{eq:5.11}). $L_{ij}[a]$ is the sum of terms that have the shape $\partial^{s_1} a \ \partial^{s_2} (\Delta^{-1}N_l[a])$ where $|s_1|+|s_2|=3$ and $|s_2|\leq 2$. Hence
\begin{align*}
&|\partial^{s_1} a \ \partial^{s_2} (\Delta^{-1}N_l[a](v_1))-\partial^{s_1} a \ \partial^{s_2} (\Delta^{-1}N_l[a](v_2)) |_{C^{0,\alpha}}\\
\stackrel{\eqref{eq:A.2}}{\leq}&K_1(\alpha,a)\cdot |  \Delta^{-1}[N_l[a](v_1)-N_l[a](v_2)]  |_{C^{2,\alpha}}\\
\stackrel{\eqref{eq:5.20}}{\leq}&K_2(\alpha,a)\cdot |  N_l[a](v_1)-N_l[a](v_2)  |_{C^{0,\alpha}}\\
\stackrel{\eqref{eq:5.31}}{\leq}& K_3(\alpha,a)\cdot \left(|v_1 |_{C^{2,\alpha}}+|v_2 |_{C^{2,\alpha}} \right)|v_1-v_2 |_{C^{2,\alpha}}
\end{align*}
Now, we consider the operator $R_{ij}[a](v)$. This operator is a sum of terms that have the shape
\begin{equation*}
\partial^{s_1} a  \partial^{s_2} a \, \partial^{s_3}v\cdot \partial^{s_4}v
\end{equation*}
where $s_1,s_2,s_3,s_4\in\mathbb{N}^n$ with $\sum_{l=1}^4{|s_l|}=4$ and $|s_3|,|s_4|\leq 2$. In this situation we have
\begin{align*}
&|\partial^{s_1} a  \partial^{s_2} a \, \partial^{s_3}v_1\cdot \partial^{s_4}v_1-\partial^{s_1} a  \partial^{s_2} a \, \partial^{s_3}v_2\cdot \partial^{s_4}v_2 |_{C^{0,\alpha}}\\
\stackrel{\eqref{eq:A.9}}{\leq}& K_1(\alpha,a) \cdot \left(|\partial^{s_3}(v_1-v_2)\cdot \partial^{s_4}v_1|_{C^{0,\alpha}}+|\partial^{s_3}v_2\cdot \partial^{s_4}(v_1-v_2) |_{C^{0,\alpha}}\right)\\
\stackrel{\eqref{eq:A.16}}{\leq}& K_2(\alpha,a) \cdot \left(|\partial^{s_3}(v_1-v_2)|_{C^{0,\alpha}}|\partial^{s_4}v_1|_{C^{0,\alpha}}+|\partial^{s_3}v_2|_{C^{0,\alpha}} |\partial^{s_4}(v_1-v_2) |_{C^{0,\alpha}}\right)\\
\stackrel{\eqref{eq:A.12}}{\leq}&K_3(\alpha,a)\cdot \left(|v_1|_{C^{2,\alpha}}+|v_2|_{C^{2,\alpha}}\right) \cdot |v_1-v_2|_{C^{2,\alpha}}
\end{align*}
which proves 
\eqref{eq:7.9}. To prove \eqref{eq:7.10}, we focus on the case  $m\geq 1$. In order to estimate the expression $|\partial_t^r L_{ij}[a](v) |_{C^{m,\alpha}}$ we consider
\begin{align*}
&|\partial_t^r \partial^{s_1} a \ \partial^{s_2} (\Delta^{-1}N_l[a](v)) |_{C^{m,\alpha}}=|\partial^{s_1} a \ \partial^{s_2} (\Delta^{-1}\partial_t^r N_l[a](v)) |_{C^{m,\alpha}}\\
\stackrel{\eqref{eq:A.13}}{\leq} &K_1(\alpha,a)\cdot |\partial^{s_2} (\Delta^{-1}\partial_t^r N_l[a](v)) |_{C^{m,\alpha}}+C_1(m,\alpha,a) \cdot |\Delta^{-1}\partial_t^r N_l[a](v) |_{C^{2,\alpha}}\\
&+C_1(m,\alpha,a)  \cdot |\Delta^{-1}\partial_t^r N_l[a](v) |_{C^{m+1,\alpha}}\\
\stackrel{\eqref{eq:5.22}}{\leq} &K_1(\alpha,a)\cdot |\partial^{s_2} (\Delta^{-1}\partial_t^r N_l[a](v)) |_{C^{m,\alpha}}+C_2(m,\alpha,a)  \cdot |\partial_t^r N_l[a](v) |_{C^{m-1,\alpha}}\\
\stackrel{\eqref{eq:A.6}}{\leq} &K_2(\alpha,a)\cdot |\Delta^{-1}\partial_t^r N_l[a](v)|_{C^{m+|s_2|,\alpha}}+K_2(\alpha,a)\cdot |\Delta^{-1}\partial_t^r N_l[a](v)|_{C^{2,\alpha}}\\
&+C_2(m,\alpha,a)  \cdot |\partial_t^r N_l[a](v) |_{C^{m-1,\alpha}}\\
\stackrel{\eqref{eq:5.22}}{\leq} &K_2(\alpha,a)\cdot |\Delta^{-1}\partial_t^r N_l[a](v)|_{C^{m+|s_2|,\alpha}}+C_3(m,\alpha,a)  \cdot |\partial_t^r N_l[a](v) |_{C^{m-1,\alpha}}\\
\stackrel{\eqref{eq:7.7}}{\leq} &K_2(\alpha,a)\cdot |\Delta^{-1}\partial_t^r N_l[a](v)|_{C^{m+|s_2|,\alpha}}\\
&+C_4(m,\alpha,a) \cdot \sum_{s=0}^r{\binom{r}{s}|\partial_t^s v(\cdot,t)|_{C^{m+1,\alpha}}}|\partial_t^{r-s} v(\cdot,t) |_{C^{m+1,\alpha}}
\end{align*}
If $|s_1|=1$ then \eqref{eq:5.22} and \eqref{eq:7.7} imply
\begin{align*}
&|\partial_t^r \partial^{s_1} a \ \partial^{s_2} (\Delta^{-1}N_l[a](v)) |_{C^{m,\alpha}}\leq C_5(m,\alpha,a) \cdot \sum_{s=0}^r{\binom{r}{s}|\partial_t^s v(\cdot,t)|_{C^{m+1,\alpha}}}|\partial_t^{r-s} v(\cdot,t) |_{C^{m+1,\alpha}}
\end{align*}
and if $|s_2|=2$ then
\begin{align*}
&|\Delta^{-1}\partial_t^r N_l[a](v)|_{C^{m+|s_2|,\alpha}}=|\Delta^{-1}\partial_t^r N_l[a](v)|_{C^{m+2,\alpha}}\\
\stackrel{\eqref{eq:5.30}}{\leq}&K_2(\alpha,a)\cdot|\partial_t^r N_l[a](v)|_{C^{m,\alpha}}+C_6(m,\alpha,a)\cdot|\partial_t^r N_l[a](v)|_{C^{m-1,\alpha}}
\end{align*}
Using \eqref{eq:7.7}, the desired estimate for the $L_{ij}$[a]-part follows.

It remains to prove the estimate for the summands of $R_{ij}[a](v)$. Let $s_1,s_2,s_3,s_4\in\mathbb{N}^n$ with $\sum_{l=1}^4{|s_l|}=4$ and $|s_3|,|s_4|\leq 2$, then
\begin{align*}
&|\partial_t^r(\partial^{s_1} a \partial^{s_2} a \, \partial^{s_3} v\cdot \partial^{s_4} v )|_{C^{m,\alpha}}\\
\stackrel{\eqref{eq:A.13}}{\leq} & K_1(\alpha,a)\cdot |\partial_t^r(\partial^{s_3} v\cdot \partial^{s_4} v) |_{C^{m,\alpha}}+C_1(m,\alpha,a)\cdot|\partial_t^r(\partial^{s_3} v\cdot \partial^{s_4} v) |_{C^{0,\alpha}}\\
&+C_1(m,\alpha)\cdot|\partial_t^r(\partial^{s_3} v\cdot \partial^{s_4} v)|_{C^{m-1,\alpha}}\\
\stackrel{\eqref{eq:A.6}}{\leq} & K_1(\alpha,a)\cdot |\partial_t^r(\partial^{s_3} v\cdot \partial^{s_4} v) |_{C^{m,\alpha}}+C_2(m,\alpha,a)\cdot|\partial_t^r(\partial^{s_3} v\cdot \partial^{s_4} v) |_{C^{m-1,\alpha}}\\
\stackrel{\eqref{eq:A.7}}{\leq} & K_1(\alpha,a)\cdot \sum_{s=0}^r{\binom{r}{s}| \partial^{s_3} \partial_t^s v\cdot \partial^{s_4} \partial_t^{r-s} v |_{C^{m,\alpha}}}\\
&+C_2(m,\alpha,a)\cdot\sum_{s=0}^r{\binom{r}{s}|\partial^{s_3} \partial_t^s v\cdot \partial^{s_4}\partial_t^{r-s} v |_{C^{m-1,\alpha}}}\\
\stackrel{\eqref{eq:A.16}}{\leq} & K_1(\alpha,a)\cdot \sum_{s=0}^r{\binom{r}{s}\left(| \partial^{s_3} \partial_t^s v|_{C^{0,\alpha}}\cdot |\partial^{s_4} \partial_t^{r-s} v |_{C^{m,\alpha}}+|\partial^{s_3} \partial_t^s v|_{C^{m,\alpha}}\cdot |\partial^{s_4} \partial_t^{r-s} v |_{C^{0,\alpha}}\right)}\\
&+C_3(m,\alpha,a)\cdot\sum_{s=0}^r{\binom{r}{s}|\partial^{s_3} \partial_t^{s} v|_{C^{m-1,\alpha}} |\partial^{s_4} \partial_t^{r-s}v |_{C^{m-1,\alpha}}}\\
\stackrel{\eqref{eq:A.12}}{\leq} & K_1(\alpha,a)\cdot \sum_{s=0}^r{\binom{r}{s}\left(| \partial^{s_3} \partial_t^s v|_{C^{0,\alpha}}\cdot |\partial^{s_4} \partial_t^{r-s} v |_{C^{m,\alpha}}+|\partial^{s_3} \partial_t^s v|_{C^{m,\alpha}}\cdot |\partial^{s_4} \partial_t^{r-s} v |_{C^{0,\alpha}}\right)}\\
&+C_3(m,\alpha,a)\cdot\sum_{s=0}^r{\binom{r}{s}|\partial_t^{s} v|_{C^{m+1,\alpha}} |\partial_t^{r-s}v |_{C^{m+1,\alpha}}}
\end{align*}
which implies the desired estimate for the $R_{ij}[a]$-part and consequently for the whole operator $M_{ij}[a]$.
\end{proof}

As it turns out, Lemma \ref{lem:7.2} and Lemma \ref{lem:7.3} imply a time-dependent version of Lemma \ref{lem:5.d}:
\begin{lem}\label{lem:7.5}
Let $a\in C_0^{\infty}(B_1(0))$ be fixed, then for all $v_1,v_2\in C^{\infty}(\overline{{B_1(0)}}\times [0,T],\mathbb{R}^q)$ we have:
\begin{align}\label{eq:7.11}
\begin{split}
&| P[a](v_1(\cdot,t) )-P[a](v_2(\cdot,t)) |_{C^{2,\alpha}}+| Q[a](v_1(\cdot,t) )-Q[a](v_2(\cdot,t)) |_{C^{2,\alpha}} \\
\leq &K(\alpha,a)\cdot \left(|v_1(\cdot,t) |_{C^{2,\alpha}}+|v_2(\cdot,t) |_{C^{2,\alpha}} \right)\cdot|v_1(\cdot,t)-v_2(\cdot,t) |_{C^{2,\alpha}}
\end{split}
\end{align}
for all $t\in[0,T]$. If $v\in C^{\infty}(\overline{{B_1(0)}}\times [0,T],\mathbb{R}^q)$, $m\geq 2$ and $r\in\mathbb{N}$ then
\begin{align}\label{eq:7.12}
\begin{split}
&|\partial_t^r P[a](v(\cdot,t)) |_{C^{m,\alpha}}+|\partial_t^r Q[a](v(\cdot,t)) |_{C^{m,\alpha}}\\
\leq &K(\alpha,a)\cdot\sum_{s=0}^r{\binom{r}{s}|\partial_t^s  v(\cdot,t) |_{C^{m,\alpha}}|\partial_t^{r-s} v(\cdot,t) |_{C^{2,\alpha}}}\\
&+C(m,\alpha,a)\cdot\sum_{s=0}^r{\binom{r}{s}|\partial_t^s v(\cdot,t)|_{C^{m-1,\alpha}}|\partial_t^{r-s} v(\cdot,t) |_{C^{m-1,\alpha}}}
\end{split}
\end{align}
for all $t\in[0,T]$.
\end{lem}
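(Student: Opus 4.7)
The plan is to reduce both estimates to the corresponding statements for $N_i[a]$ (Lemma \ref{lem:7.2}) and $M_{ij}[a]$ (Lemma \ref{lem:7.4}) through the elliptic Schauder theory for $\Delta^{-1}$, exploiting the compact support of $a$.

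\textbf{Proof of \eqref{eq:7.11}.} Fix $t\in [0,T]$. By definitions \eqref{eq:5.14} and \eqref{eq:5.15} together with the linearity of $\Delta^{-1}$,
\begin{equation*}
P_i[a](v_1)-P_i[a](v_2)=a\,\Delta^{-1}\bigl(N_i[a](v_1)-N_i[a](v_2)\bigr),\qquad Q_{ij}[a](v_1)-Q_{ij}[a](v_2)=\Delta^{-1}\bigl(M_{ij}[a](v_1)-M_{ij}[a](v_2)\bigr).
\end{equation*}
For the $P$-part I would apply the Hölder product rule \eqref{eq:A.13} to pull the factor $a$ out of $|\cdot|_{C^{2,\alpha}}$, then use the Schauder estimate \eqref{eq:5.20} with trivial boundary data to absorb $\Delta^{-1}$, and finally invoke \eqref{eq:7.6}. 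For the $Q$-part the argument is identical except that \eqref{eq:7.9} replaces \eqref{eq:7.6}. Summing yields \eqref{eq:7.11}.

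\textbf{Proof of \eqref{eq:7.12}.} The key observation is that since the spatial PDE $\Delta u=f$ is linear and elliptic with $t$-independent coefficients, the time-regularity granted by Lemma \ref{lem:7.3} lets us commute $\partial_t^r$ with $\Delta^{-1}$, giving
\begin{equation*}
\partial_t^r P_i[a](v(\cdot,t))=a\,\Delta^{-1}\partial_t^r N_i[a](v(\cdot,t)),\qquad \partial_t^r Q_{ij}[a](v(\cdot,t))=\Delta^{-1}\partial_t^r M_{ij}[a](v(\cdot,t)).
\end{equation*}
Since $a\in C^{\infty}_0(B_1(0))$ the functions $N_i[a](v(\cdot,t))$ and $M_{ij}[a](v(\cdot,t))$ are compactly supported inside $B_1(0)$, hence so are all their time derivatives. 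This compact support is what allows me to invoke the improved Schauder estimate \eqref{eq:5.30} rather than the plain \eqref{eq:5.22}, and this is the only way to keep a constant of the form $K(\alpha,a)$ in front of the top-order term rather than a constant depending on $m$.

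Applying \eqref{eq:A.13} once more to extract $a$ (in the $P$ case) and then \eqref{eq:5.30} with exponent shifted by two, I obtain for the $P$-part
\begin{equation*}
|\partial_t^r P_i[a](v)|_{C^{m,\alpha}}\leq K(\alpha,a)\,|\partial_t^r N_i[a](v)|_{C^{m-2,\alpha}}+C(m,\alpha,a)\,|\partial_t^r N_i[a](v)|_{C^{m-3,\alpha}},
\end{equation*}
and an analogous bound for $|\partial_t^r Q_{ij}[a](v)|_{C^{m,\alpha}}$ via Lemma \ref{lem:7.4}. Plugging \eqref{eq:7.7} (with $m$ replaced by $m-2$) into the leading term and \eqref{eq:7.7} (with $m$ replaced by $m-3$) into the lower-order term, and doing the same with \eqref{eq:7.10} for the $Q$-part, produces exactly the RHS of \eqref{eq:7.12}: the first sum is carried by the sharp constant $K(\alpha,a)$, while the mixed lower-order contributions (from both the Schauder estimate and from the second term of \eqref{eq:7.7}/\eqref{eq:7.10}) are absorbed into $C(m,\alpha,a)\sum_s \binom{r}{s}|\partial_t^s v|_{C^{m-1,\alpha}}|\partial_t^{r-s} v|_{C^{m-1,\alpha}}$.

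\textbf{Main obstacle.} The content of the lemma is mostly bookkeeping, but the delicate point is arranging the constants so that the first sum on the right-hand side carries a constant $K(\alpha,a)$ that is \emph{independent of $m$}. This forces the use of the compact-support Schauder estimate \eqref{eq:5.30} (together with the sharp product inequality \eqref{eq:A.13}) rather than the routine estimate \eqref{eq:5.22}; without this refinement the argument goes through but yields only the weaker bound with $C(m,\alpha,a)$ as the overall constant, which is insufficient for the iterative $C^{m,\alpha}$ bounds needed in Lemma \ref{lem:5.13} and its time-dependent analogues.
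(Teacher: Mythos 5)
Your proposal is correct and follows essentially the same route as the paper: for \eqref{eq:7.11} you pull out $a$ with the H\"older product rule, absorb $\Delta^{-1}$ via the Schauder estimate \eqref{eq:5.20} and then invoke \eqref{eq:7.6}/\eqref{eq:7.9}; for \eqref{eq:7.12} you commute $\partial_t^r$ with $\Delta^{-1}$ (justified by Lemma~\ref{lem:7.3}), use \eqref{eq:A.13} together with the compact-support Schauder estimate \eqref{eq:5.30} to keep the leading constant $m$-independent, and then feed in \eqref{eq:7.7}/\eqref{eq:7.10} with shifted exponents. One minor shared technicality: both your argument and the paper's work out \eqref{eq:7.12} under the assumption $m\geq 3$ (so that \eqref{eq:5.30} applies with exponent $m-2\geq 1$); the borderline case $m=2$ requires the plain estimate \eqref{eq:5.20} in place of \eqref{eq:5.30}, but is easy since then $m$ is fixed and any constant is $m$-independent.
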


\begin{proof}
The estimates
\begin{align*}
&|P[a](v_1)-P[a](v_2) |_{C^{2,\alpha}}=\sum_{l=1}^n{|P_i[a](v_1)-P_i[a](v_2) |_{C^{2,\alpha}}}\\
\stackrel{\eqref{eq:5.14}}{=}&\sum_{i=1}^n{|a\Delta^{-1} \left[ N_i[a](v_1)- N_i[a](v_2)\right] |_{C^{2,\alpha}}}\\
\stackrel{\eqref{eq:A.9}}{\leq}&K_1(\alpha,a)\cdot\sum_{i=1}^n{|\Delta^{-1}\left[ N_i[a](v_1)- N_i[a](v_2)\right] |_{C^{2,\alpha}}}\\
\stackrel{\eqref{eq:5.20}}{\leq}&K_2(\alpha,a)\cdot| N_i[a](v_1)- N_i[a](v_2)|_{C^{0,\alpha}}\\
\stackrel{\eqref{eq:5.31}}{\leq}&K_3(\alpha,a)\cdot \left(|v_1 |_{C^{2,\alpha}} +|v_2 |_{C^{2,\alpha}}\right) |v_1-v_2 |_{C^{2,\alpha}}
\end{align*}
and
\begin{align*}
&|Q[a](v_1)-Q[a](v_2) |_{C^{2,\alpha}}=\sum_{1\leq i\leq j\leq n}{|Q_{ij}[a](v_1)-Q_{ij}[a](v_2) |_{C^{2,\alpha}}}\\
\stackrel{\eqref{eq:5.15}}{=}&\sum_{1\leq i\leq j\leq n}{|\Delta^{-1}\left[M_{ij}[a](v_1)-M_{ij}[a](v_2)\right] |_{C^{2,\alpha}}}\\
\stackrel{\eqref{eq:5.20}}{\leq}&\sum_{1\leq i\leq j\leq n}{K_4(\alpha,a)\cdot |M_{ij}[a](v_1)-M_{ij}[a](v_2)|_{C^{0,\alpha}}}\\
\stackrel{\eqref{eq:5.33}}{\leq}&K_5(\alpha,a)\cdot \left(|v_1 |_{C^{2,\alpha}} +|v_2 |_{C^{2,\alpha}}\right)|v_1-v_2 |_{C^{2,\alpha}}
\end{align*}
prove \eqref{eq:7.11}. We prove \eqref{eq:7.12} under the assumption that $m\geq 3$:
\begin{align*}
&|\partial_t^r P[a](v) |_{C^{m,\alpha}}=\sum_{i=1}^n{|\partial_t^r P_i[a](v)|_{C^{m,\alpha}}}\stackrel{\eqref{eq:5.14}}{=}\sum_{i=1}^n{|a\Delta^{-1} \partial_t^r N_i[a](v)|_{C^{m,\alpha}}}\\
\stackrel{\eqref{eq:A.13}}{\leq}&\sum_{i=1}^n{|a|_{C^{0,\alpha}}|\Delta^{-1} \partial_t^r N_i[a](v)|_{C^{m,\alpha}}}+\sum_{i=1}^n{|a|_{C^{m,\alpha}}|\Delta^{-1} \partial_t^r N_i[a](v)|_{C^{0,\alpha}}}\\
&+C_1(m,\alpha) \cdot\sum_{i=1}^n{|a|_{C^{m-1,\alpha}}|\Delta^{-1} \partial_t^r N_i[a](v)|_{C^{m-1,\alpha}}}\\
\stackrel{\hphantom{\eqref{eq:A.13}}}{\leq}&K_1(\alpha,a)\cdot\sum_{i=1}^n{|\Delta^{-1} \partial_t^r N_i[a](v)|_{C^{m,\alpha}}}+C_2(m,\alpha,a)\cdot\sum_{i=1}^n{|\Delta^{-1} \partial_t^r N_i[a](v)|_{C^{0,\alpha}}}\\
&+C_3(m,\alpha,a) \cdot\sum_{i=1}^n{|\Delta^{-1} \partial_t^r N_i[a](v)|_{C^{m-1,\alpha}}}\\
\stackrel{\eqref{eq:A.6}}{\leq}&K_1(\alpha,a)\cdot\sum_{i=1}^n{|\Delta^{-1} \partial_t^r N_i[a](v)|_{C^{m,\alpha}}}+C_4(m,\alpha,a) \cdot\sum_{i=1}^n{|\Delta^{-1} \partial_t^r N_i[a](v)|_{C^{m-1,\alpha}}}\\
\stackrel{ \eqref{eq:5.30}}{\leq} &K(\alpha,a)\cdot\sum_{i=1}^n{|\partial_t^r  N_i[a](v)|_{C^{m-2,\alpha}}}+C_5(m,\alpha,a)\cdot \sum_{i=1}^n{|\partial_t^r  N_i[a](v)|_{C^{m-3,\alpha}}}
\end{align*}
which implies the desired estimate. It remains to prove the desired estimate for the $\partial_t^r Q_{ij}[a](v)$-part.
\begin{align*}
&|\partial_t^r Q[a](v) |_{C^{m,\alpha}}=\sum_{1\leq i\leq j\leq n}{|\partial_t^r Q_{ij}[a](v)|_{C^{m,\alpha}}}=\sum_{1\leq i\leq j\leq n}{|\Delta^{-1}\partial_t^r M_{ij}[a](v)|_{C^{m,\alpha}}}\\
\leq & K(\alpha,a)\cdot\sum_{1\leq i\leq j\leq n}{|\partial_t^r M_{ij}[a](v)|_{C^{m-2,\alpha}}}+C(m,\alpha,a)\cdot\sum_{1\leq i\leq j\leq n}{|\partial_t^r M_{ij}[a](v)|_{C^{m-3,\alpha}}}
\end{align*}
The desired estimate follows from \eqref{eq:7.10}.
\end{proof}

We also need time-dependent adaptations of Lemma \ref{lem:5.a} and \ref{lem:5.e}:
\begin{lem}\label{lem:7.6}
Let $F_0\in C^{\infty}(\overline{B_1(0)},\mathbb{R}^q)$ be a free immersion and $r\in\mathbb{N}$. Then, for all
 $h\in C^{\infty}(\overline{B_1(0)}\times[0,T],\mathbb{R}^n)$ and $f\in C^{\infty}(\overline{B_1(0)}\times[0,T],\mathbb{R}^{\frac{n}{2}(n+1)})$ we have
\begin{align}\label{eq:7.13}
\left|\partial_t^r E[F_0](h(\cdot,t),f(\cdot,t)) \right|_{C^{m,\alpha}}\leq C(m,\alpha,F_0)\cdot\left(\left|\partial_t^r  h(\cdot,t) \right|_{C^{m,\alpha}}+\left|\partial_t^r  f(\cdot,t) \right|_{C^{m,\alpha}}\right)
\end{align}
for all $t\in [0,T]$.
\end{lem}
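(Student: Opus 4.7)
The plan is to exploit the fact that $E[F_0]$ is a time-independent linear operator in $(h,f)$, so time derivatives pass through it trivially.

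First I would recall from \eqref{eq:5.19} that $E[F_0](h,f)(x) = \Theta[F_0](x)\cdot (h(x),f(x))^{\top}$, where $\Theta[F_0]$ depends only on $F_0$ (and the derivatives of $F_0$), not on the time parameter $t$. Since $h$ and $f$ are smooth in $(x,t)$ and $\Theta[F_0]$ is smooth in $x$ alone, the product rule in $t$ reduces to
\begin{equation*}
\partial_t^r E[F_0]\bigl(h(\cdot,t),f(\cdot,t)\bigr)(x) = \Theta[F_0](x)\cdot \begin{pmatrix} \partial_t^r h(x,t)\\ \partial_t^r f(x,t)\end{pmatrix} = E[F_0]\bigl(\partial_t^r h(\cdot,t),\partial_t^r f(\cdot,t)\bigr)(x),
\end{equation*}
i.e.\ $\partial_t^r$ commutes with $E[F_0]$.

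Next I would apply Lemma \ref{lem:5.a}, specifically estimate \eqref{eq:5.46}, to the function $E[F_0]\bigl(\partial_t^r h(\cdot,t),\partial_t^r f(\cdot,t)\bigr)$ at a fixed time $t\in[0,T]$. This yields
\begin{equation*}
\bigl|E[F_0]\bigl(\partial_t^r h(\cdot,t),\partial_t^r f(\cdot,t)\bigr)\bigr|_{C^{m,\alpha}} \leq C(m,\alpha,F_0)\cdot\bigl(|\partial_t^r h(\cdot,t)|_{C^{m,\alpha}} + |\partial_t^r f(\cdot,t)|_{C^{m,\alpha}}\bigr),
\end{equation*}
which, combined with the commutation identity above, is exactly \eqref{eq:7.13}.

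There is essentially no obstacle here: the whole content is that $F_0$ (and hence $\Theta[F_0]$) is time-independent, so the operator $E[F_0]$ acts only on the spatial slots and commutes with $\partial_t^r$. The smoothness of $h$ and $f$ in $(x,t)$ ensures that the $C^{m,\alpha}$-norm of $\partial_t^r h(\cdot,t)$ and $\partial_t^r f(\cdot,t)$ is finite and depends continuously on $t$, so the estimate holds pointwise in $t\in[0,T]$ as claimed.
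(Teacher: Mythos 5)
Your proposal is correct and captures exactly the key idea the paper uses: since $\Theta[F_0]$ does not depend on $t$, the time derivative $\partial_t^r$ commutes with the linear operator $E[F_0]$, which reduces the claim to the static estimate \eqref{eq:5.46}. The one organizational caveat is that in this paper Lemma \ref{lem:5.a} is stated \emph{without} a separate proof and the reader is explicitly referred to the proof of Lemma \ref{lem:7.6} for it; so the paper's proof of Lemma \ref{lem:7.6} does not just perform the commutation and cite \eqref{eq:5.46}, but actually writes out $E_l[F_0](h,f)=\sum_i A_{l,i}h_i+\sum_{i\le j}B_{l,ij}f_{ij}$, commutes $\partial_t^r$ past the time-independent coefficients, and then carries out the Leibniz-rule/H\"older estimate \eqref{eq:A.2}, \eqref{eq:A.6}, \eqref{eq:A.7} to establish the $C^{m,\alpha}$ bound from scratch. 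If you invoke \eqref{eq:5.46} as a black box you should note that a proof of it is still owed somewhere; the content of that proof is exactly the spatial-derivative estimate the paper carries out here.
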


\begin{proof}
By definition, the $l$-th component of $E[F_0](h,f)$ (cf. \eqref{eq:5.19}) satisfies
\begin{equation}\label{eq:5.42}
E_l[F_0](h,f)=\sum_{i=1}^n{A_{l,i}h_i}+ \sum_{1\leq i\leq j\leq n}{B_{l,ij} f_{ij}}
\end{equation}
where $\{A_{l,i}\}_{1\leq i\leq n}\subset C^{\infty}(\overline{B_1(0)})$ and $\{B_{l,ij}\}_{1\leq i\leq j\leq n}\subset C^{\infty}(\overline{B_1(0)})$ depend on $F_0$.
Let $\beta\in\mathbb{N}^n$ be a multi index of order $k$ where $0\leq k\leq m$ then:
\begin{align*}
=&\left|\partial^{\beta} \partial_t^{r}(E_l[F_0](h,f))\right|_{C^{0,\alpha}}=\left|\partial^{\beta} (E_l[F_0](\partial_t^{r}h,\partial_t^{r}f))\right|_{C^{0,\alpha}}\\
=&\left|\sum_{i=1}^n{\partial^{\beta}(A_{l,i}\partial_t^{r} h_i)}+ \sum_{1\leq i\leq j\leq n}{\partial^{\beta}(B_{l,ij} \partial_t^{r} f_{ij})}\right|_{C^{0,\alpha}}\\
\stackrel{\eqref{eq:A.7}}{\leq}&\sum_{i=1}^n{\sum_{\gamma\leq \beta}{\binom{\beta}{\gamma}|\partial^{\gamma} A_{l,i}\partial^{\beta-\gamma}\partial_t^{r} h_i|_{C^{0,\alpha}}}}+ \sum_{1\leq i\leq j\leq n}{\sum_{\gamma\leq \beta}{\binom{\beta}{\gamma}|\partial^{\gamma} B_{l,ij} \partial^{\beta-\gamma}\partial_t^{r} f_{ij}|_{C^{0,\alpha}}}}\\
\stackrel{\eqref{eq:A.2}}{\leq}&\sum_{i=1}^n{\sum_{\gamma\leq \beta}{\binom{\beta}{\gamma}|A_{l,i}|_{C^{|\gamma|,\alpha}}|\partial_t^{r} h_i|_{C^{|\beta-\gamma|,\alpha}}}}+ \sum_{1\leq i\leq j\leq n}{\sum_{\gamma\leq \beta}{\binom{\beta}{\gamma}|B_{l,ij}|_{C^{|\gamma|,\alpha}} |\partial_t^{r} f_{ij}|_{C^{|\beta-\gamma|,\alpha}}}}\\
\stackrel{\eqref{eq:A.6}}{\leq}&C_1(k,\alpha,F_0)\cdot \left(|\partial_t^{r} h |_{C^{k,\alpha}}+|\partial_t^{r} f |_{C^{k,\alpha}}\right)
\end{align*}
which implies
\begin{align*}
&\left|\partial_t^{r} E_l[F_0](h,f)\right|_{C^{m,\alpha}}=\left| E_l[F_0](\partial_t^{r} h,\partial_t^{r} f)\right|_{C^{0,\alpha}}+\sum_{|\beta|=m}{\left|\partial^{\beta} (E_l[F_0](\partial_t^{r} h,\partial_t^{r} f)) \right|_{C^{0,\alpha}}}\\
\stackrel{\hphantom{\eqref{eq:A.12}}}{\leq}& C_1(\alpha,F_0)\cdot\left(|\partial_t^{r} h |_{C^{0,\alpha}}+|\partial_t^{r} f |_{C^{0,\alpha}}\right)+C_1(m,\alpha,F_0)\cdot\left(|\partial_t^{r} h |_{C^{m,\alpha}}+|\partial_t^{r} f |_{C^{m,\alpha}}\right)\\
\stackrel{\eqref{eq:A.12}}{\leq}&C_2(m,\alpha,F_0)\cdot\left(|\partial_t^{r} h |_{C^{m,\alpha}}+|\partial_t^{r} f |_{C^{m,\alpha}}\right)
\end{align*}
and finally
\begin{align*}
|\partial_t^{r} E[F_0](h,f) |_{C^{m,\alpha}}=&\sum_{l=1}^q{|\partial_t^{r} E_l[F_0](h,f)|_{C^{m,\alpha}}}\leq  C(m,\alpha,F_0)\cdot\left(|\partial_t^{r} h |_{C^{m,\alpha}}+|\partial_t^{r} f |_{C^{m,\alpha}}\right)
\end{align*}
\end{proof}
We also need an improved version of the previous Lemma:
\begin{lem}\label{lem:7.7}
Let $F_0\in C^{\infty}(\overline{B_1(0)},\mathbb{R}^q)$ be a free immersion and $r\in\mathbb{N}$. Let $m\geq 3$, then, for all $h\in C^{\infty}(\overline{B_1(0)}\times[0,T],\mathbb{R}^n)$ and $f\in C^{\infty}(\overline{B_1(0)}\times[0,T],\mathbb{R}^{\frac{n}{2}(n+1)})$ we have the estimate:
\begin{align}\label{eq:7.21}
\begin{split}
\left|\partial_t^r E[F_0](h(\cdot,t),f(\cdot,t)) \right|_{C^{m,\alpha}}\leq &\left\Vert E[F_0] \right\Vert_{2,\alpha}\cdot \left( \left|\partial_t^r h(\cdot,t)\right|_{C^{m,\alpha}}+\left| \partial_t^r f(\cdot,t) \right|_{C^{m,\alpha}}\right)\\
&+C(m,\alpha,F_0)\cdot\left(\left|\partial_t^r h(\cdot,t) \right|_{C^{m-1,\alpha}}+\left|\partial_t^r f(\cdot,t)\right|_{C^{m-1,\alpha}}\right)
\end{split}
\end{align}
for all $t\in [0,T]$.
\end{lem}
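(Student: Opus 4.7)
The plan is to exploit the linearity of $E[F_0]$ in its arguments $(h,f)$ together with the fact that its ``coefficients'' depend only on $F_0$, hence only on the spatial variable and not on $t$, in order to reduce the claim to the static Lemma \ref{lem:5.e}.

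Concretely, I would first recall the pointwise representation \eqref{eq:5.42},
\[
E_l[F_0](h,f)=\sum_{i=1}^n A_{l,i}\,h_i+\sum_{1\le i\le j\le n}B_{l,ij}\,f_{ij},
\]
where $\{A_{l,i}\}_{1\le i\le n}, \{B_{l,ij}\}_{1\le i\le j\le n}\subset C^\infty(\overline{B_1(0)})$ are determined by $F_0$ via $\Theta[F_0]$ (cf.\ \eqref{eq:5.41}) and are therefore independent of $t$. Since $E[F_0]$ is linear in $(h,f)$ and its coefficients do not depend on the time parameter, $\partial_t^r$ commutes with $E[F_0]$:
\[
\partial_t^r E[F_0](h(\cdot,t),f(\cdot,t))=E[F_0]\bigl(\partial_t^r h(\cdot,t),\partial_t^r f(\cdot,t)\bigr)
\]
for every $t\in[0,T]$ and every $r\in\mathbb{N}$.

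With this identity, I would then apply Lemma \ref{lem:5.e} for fixed $t\in[0,T]$ to the pair $\bigl(\partial_t^r h(\cdot,t),\partial_t^r f(\cdot,t)\bigr)\in C^{m,\alpha}(B_1(0),\mathbb{R}^n)\times C^{m,\alpha}(B_1(0),\mathbb{R}^{\frac{n}{2}(n+1)})$, which yields precisely the desired bound \eqref{eq:7.21}. In particular the operator-norm factor $\|E[F_0]\|_{2,\alpha}$ is preserved in front of the leading term, and the remainder is absorbed into the $C(m,\alpha,F_0)$ constant.

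There is no real obstacle here: the whole argument is a one-line commutation reduction to the static case, exactly parallel to the passage from Lemma \ref{lem:5.a} to Lemma \ref{lem:7.6} (where the same device was used, only combined there with the explicit coefficient expansion rather than with the sharper estimate of Lemma \ref{lem:5.e}). The only point deserving a brief comment is that moving $\partial_t^r$ through $E[F_0]$ does not consume any spatial derivative, so the constant in front of the top-order term remains $\|E[F_0]\|_{2,\alpha}$ and is not inflated by any factor depending on $m$.
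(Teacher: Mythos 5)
Your mathematical content is correct and the key observation — that $\partial_t^r$ commutes with $E[F_0]$ because the coefficients $A_{l,i}, B_{l,ij}$ in \eqref{eq:5.42} depend only on $F_0$ and hence only on the spatial variable — is exactly the insight the paper relies on. However, there is an architectural subtlety you should be aware of: in this paper Lemma \ref{lem:5.e} is \emph{not} proved independently; the sentence preceding Lemma \ref{lem:5.a} and Lemma \ref{lem:5.e} explicitly says ``We refer to the proofs of Lemma \ref{lem:7.6} and Lemma \ref{lem:7.7}.'' In other words, the proof of Lemma \ref{lem:7.7} \emph{is} the paper's proof of Lemma \ref{lem:5.e} (the case $r=0$). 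Reducing Lemma \ref{lem:7.7} to Lemma \ref{lem:5.e}, as you do, would therefore be circular in the self-contained logic of this paper, even though it is perfectly legitimate if one treats G\"unther's \cite[Lemma~5.2]{gunther1989einbettungssatz} as an established external result. The paper instead carries out the argument from scratch: after the implicit commutation it expands $\partial^{\beta}E_l[F_0](\partial_t^r h,\partial_t^r f)$ for a multi-index $\beta$ of order $m-2$ via the Leibniz rule \eqref{eq:A.7}, bounds the commutator $\partial^{\beta}E_l[F_0](\partial_t^{r}h,\partial_t^{r}f)-E_l[F_0](\partial^{\beta}\partial_t^{r}h,\partial^{\beta}\partial_t^{r}f)$ by $C(m,\alpha,F_0)\bigl(|\partial_t^{r}h|_{C^{m-1,\alpha}}+|\partial_t^{r}f|_{C^{m-1,\alpha}}\bigr)$ using \eqref{eq:A.6}, and then combines this with \eqref{eq:A.11} and \eqref{eq:A.12} to keep the top-order factor equal to $\|E[F_0]\|_{2,\alpha}$. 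If you want a proof that stands on its own within this paper, you need to supply this spatial commutator estimate explicitly rather than cite Lemma \ref{lem:5.e}; if you are content to invoke G\"unther's result as known, your two-line reduction is complete and is in fact the cleaner phrasing of the same argument.
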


\begin{proof}
Let $l\in\{1,...,q\}$ and $\beta\in\mathbb{N}^n$ be a multi index of order $m-2$ then \eqref{eq:5.42} and
\eqref{eq:A.6} imply
\begin{align*}
&\left|\partial^{\beta} \partial_t^{r}E_l[F_0](h,f)-E_l[F_0](\partial^{\beta} \partial_t^{r}h,\partial^{\beta} \partial_t^{r}f) \right|_{C^{2,\alpha}}\\
\leq &C_1(\alpha)\cdot\sum_{0<\gamma\leq\beta}{\binom{\beta}{\gamma}\left(|\partial^{\gamma} A_{l,i}|_{C^{2,\alpha}}\cdot |\partial^{\beta-\gamma}\partial_t^{r}h_i|_{C^{2,\alpha}}+|\partial^{\gamma} B_{l,ij}|_{C^{2,\alpha}}\cdot|\partial^{\beta-\gamma} \partial_t^{r}f_{ij}|_{C^{2,\alpha}}\right)}\\
\leq &C_2(m,\alpha,F_0)\cdot \left(|\partial_t^{r}h |_{C^{m-1,\alpha}}+|\partial_t^{r}f |_{C^{m-1,\alpha}}\right)
\end{align*}
and hence
\begin{align}\label{eq:5.43}
\begin{split}
\left|\partial^{\beta} \partial_t^{r}E[F_0](h,f)-E[F_0](\partial^{\beta} \partial_t^{r}h,\partial^{\beta} \partial_t^{r}f) \right|_{C^{2,\alpha}}
\leq C_3(m,\alpha,F_0)\cdot \left(|\partial_t^{r}h |_{C^{m-1,\alpha}}+|\partial_t^{r}f |_{C^{m-1,\alpha}}\right)
\end{split}
\end{align}
Finally \eqref{eq:A.11} implies
\begin{align*}
&\left|E[F_0](\partial_t^{r}h,\partial_t^{r}f) \right|_{C^{m,\alpha}}\leq \left|E[F_0](\partial_t^{r}h,\partial_t^{r}f) \right|_{C^{0,\alpha}}+\sum_{|\beta|=m-2}{\left|\partial^{\beta} E[F_0](\partial_t^{r}h,\partial_t^{r}f) \right|_{C^{2,\alpha}
}}\\
\stackrel{\hphantom{\eqref{eq:5.43}}}{\leq} & \left\Vert E[F_0] \right\Vert_{0,\alpha}\cdot\left(\left|\partial_t^{r}h \right|_{C^{0,\alpha}}+\left|\partial_t^{r}f \right|_{C^{0,\alpha}}\right)\\
&+\sum_{|\beta|=m-2}{\left|\partial^{\beta} E[F_0](\partial_t^{r}h,\partial_t^{r}f)-E[F_0](\partial^{\beta} \partial_t^{r}h,\partial^{\beta} \partial_t^{r}f) \right|_{C^{2,\alpha}}}\\
&+\sum_{|\beta|=m-2}{\left|E[F_0](\partial^{\beta} \partial_t^{r}h,\partial^{\beta} \partial_t^{r}f) \right|_{C^{2,\alpha}}}\\
\stackrel{\eqref{eq:5.43}}{\leq} &\left\Vert E[F_0] \right\Vert_{0,\alpha}\cdot \left(|\partial_t^{r}h |_{C^{0,\alpha}}+|\partial_t^{r}f |_{C^{0,\alpha}}\right)+C(m,\alpha,F_0)\cdot\left(|\partial_t^{r}h |_{C^{m-1,\alpha}}+|\partial_t^{r}f |_{C^{m-1,\alpha}}\right)\\
&+\sum_{|\beta|=m-2}{\left\Vert E[F_0] \right\Vert_{2,\alpha}\cdot\left(|\partial^{\beta} \partial_t^{r}h |_{C^{2,\alpha}}+|\partial^{\beta} \partial_t^{r}f |_{C^{2,\alpha}}\right)}
\end{align*}
The desired estimate follows from \eqref{eq:A.12}.
\end{proof}

We use these considerations to iterate the regularity of the local solution in Theorem \ref{satz:7.1} on a smaller time interval, if necessary:

\begin{theorem}\label{satz:7.2}
Under the assumptions of Theorem \ref{satz:7.1} there exists $\widehat{T}_x\in (0,T_x]$ so that the solution $(F(\cdot,t))_{t\in [0,\widehat{T}_x]}\subset C^{\infty}(\overline{U}_x,\mathbb{R}^q)$ in Theorem \ref{satz:7.1} satisfies
\begin{equation*}
F\in C^{\infty}(\overline{U}_x\times[0,\widehat{T}_x],\mathbb{R}^q)
\end{equation*}
\end{theorem}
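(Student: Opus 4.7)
The strategy is to analyze the fixed-point iteration \eqref{eq:7.15} in the joint variable $(x,t)$ by an induction on the order $r$ of time differentiation, with uniform spatial $C^{m,\alpha}$-control at each step, possibly shrinking $T_x$ to $\widehat{T}_x$ to preserve contractivity.

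For the base case $r=0$, the proof of Lemma \ref{lem:5.13} already yields uniform $C^{m,\alpha}$-bounds on the iterates $v_k(\cdot,t)$ for every $m\geq 2$; continuity of $t\mapsto v(\cdot,t)$ in $C^{2,\alpha}$ follows from Lemma \ref{lem:7.1} applied to $f^{(j)}=\widehat{g}(\cdot,t_j)$ together with the smoothness of $\widehat{g}$ in $t$, and a bootstrap using \eqref{eq:5.38} and \eqref{eq:5.49} upgrades this to continuity in $C^{m,\alpha}$ for every $m$.

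For the induction step, I differentiate the recursion $r$ times in $t$,
$$\partial_t^r v_k(\cdot,t)=-E[F_0]\!\left(\partial_t^r P[a](v_{k-1}(\cdot,t)),\,\tfrac{1}{2}\partial_t^r\widehat{g}(\cdot,t)-\tfrac{1}{2}\partial_t^r Q[a](v_{k-1}(\cdot,t))\right),$$
and apply Lemma \ref{lem:7.7} followed by Lemma \ref{lem:7.5}. The resulting bound on $|\partial_t^r v_k(\cdot,t)|_{C^{m,\alpha}}$ consists of $\|E[F_0]\|_{2,\alpha}$ times a sum over $0\leq s\leq r$ of products $|\partial_t^s v_{k-1}|_{C^{m,\alpha}}|\partial_t^{r-s} v_{k-1}|_{C^{2,\alpha}}$, plus terms of strictly lower spatial or temporal order and a smooth forcing contribution from $\widehat{g}$. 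The extremal summands $s\in\{0,r\}$ carry the critical factor $\|E[F_0]\|_{2,\alpha}\cdot|v_{k-1}|_{C^{2,\alpha}}$, which by \eqref{eq:7.18} is bounded by $\|E[F_0]\|_{2,\alpha}\cdot|E[F_0](0,\widehat{g}(\cdot,t))|_{C^{2,\alpha}}$ and can be driven below any prescribed threshold on $[0,\widehat{T}_x]$ by choosing $\widehat{T}_x$ small enough. For such $\widehat{T}_x$ the iteration is contractive in the top-order norm $|\partial_t^r\cdot|_{C^{m,\alpha}}$, while all remaining summands are handled by the inductive hypothesis at lower orders. Applying the same estimates to $\partial_t^r(v_k-v_{k-1})$ shows that $(\partial_t^r v_k)_k$ is $C([0,\widehat{T}_x],C^{m,\alpha})$-Cauchy, and the interchange of limits identifies its limit with $\partial_t^r v$.

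Since uniform bounds and Cauchy convergence are obtained for every $(r,m)$, we conclude $v\in C^\infty(\overline{B_1(0)}\times[0,\widehat{T}_x],\mathbb{R}^q)$, and hence $F(y,t)=F_0(y)+a^2(\varphi(y))\,v(\varphi(y),t)$ lies in $C^\infty(\overline{U}_x\times[0,\widehat{T}_x],\mathbb{R}^q)$. The main obstacle is the bookkeeping of mixed products of time derivatives of various orders appearing in Lemmas \ref{lem:7.5} and \ref{lem:7.7}, together with the verification that a single choice of $\widehat{T}_x$ makes the iteration contractive simultaneously for all $(r,m)$; this succeeds because the contraction factor is governed solely by $|E[F_0](0,\widehat{g}(\cdot,t))|_{C^{2,\alpha}}$ and the constants $K(\alpha,a)$ in Lemmas \ref{lem:7.5} and \ref{lem:7.7} depend only on the fixed data $\alpha$ and $a$.
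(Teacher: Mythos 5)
Your proposal takes essentially the same approach as the paper: differentiate the fixed-point iteration in time, establish uniform bounds $|\partial_t^r\partial^\beta v_k|_{C^0}\leq C(r,\beta)$ via a double induction in $r$ and the spatial order $m$ using Lemmas \ref{lem:7.5}, \ref{lem:7.6} and \ref{lem:7.7}, and shrink $\widehat{T}_x$ so that $\left\Vert E[F_0]\right\Vert_{2,\alpha}\left|E[F_0](0,\widehat{g}(\cdot,t))\right|_{C^{2,\alpha}}$ is small enough for contraction in the top-order norm, whence smoothness of $v$. One small correction: in the estimate of $\partial_t^r v_k$ only the $s=r$ summand $|\partial_t^r v_{k-1}|_{C^{m,\alpha}}|v_{k-1}|_{C^{2,\alpha}}$ is genuinely contractive, while the $s=0$ summand $|v_{k-1}|_{C^{m,\alpha}}|\partial_t^r v_{k-1}|_{C^{2,\alpha}}$ does not carry the factor $|v_{k-1}|_{C^{2,\alpha}}$ and is instead absorbed by the inductive hypothesis at lower spatial or temporal order, exactly as the paper does in \eqref{eq:7.23}--\eqref{eq:7.25}.
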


\begin{proof}
Lemma \ref{lem:7.3} implies that the approximation sequence $(v_k)_{k\in\mathbb{N}}$ in \eqref{eq:7.15} is smooth, i.e.: $(v_k)_{k\in\mathbb{N}}\subset C^{\infty}(\overline{B_1(0)}\times[0,T_x],\mathbb{R}^q)$ where we have used the smoothness of the metric $g$, the definition of the operator $\Phi[F_0,a,\widehat{g}]$ in \eqref{eq:5.18} and the definition of $P$ and $Q$ in \eqref{eq:5.14} and \eqref{eq:5.15}.
It is our aim to show that there exists $\widehat{T}_x\in (0,T_x]$ so that
\begin{align}\label{eq:7.26}
\left|\partial_t^r \partial^{\beta} v_k \right|_{C^0(\overline{B_1(0)}\times [0,\widehat{T}_x],\mathbb{R}^q)}\leq C(r,\beta)
\end{align}
for all $r,k\in\mathbb{N}$ and $\beta\in\mathbb{N}^n$ which implies $v\in C^{\infty}(\overline{B_1(0)}\times[0,\widehat{T}_x],\mathbb{R}^q)$.

Let $r\in\mathbb{N}$ then \eqref{eq:7.15}, \eqref{eq:5.18}, \eqref{eq:7.13}, \eqref{eq:7.12} and \eqref{eq:7.18} imply for all $k\geq 1$
\begin{align*}
 \left|\partial_t^r v_k(\cdot,t) \right|_{C^{2,\alpha}}\leq &2\cdot K_1(\alpha,a)\cdot   \left\Vert E[F_0] \right\Vert_{2,\alpha}\cdot\left|E[F_0]\left(0,\widehat{g}(\cdot,t)\right)\right|_{C^{2,\alpha}}|\partial_t^r v_{k-1}(\cdot,t) |_{C^{2,\alpha}}\\
&+K_1(\alpha,a)\cdot   \left\Vert E[F_0] \right\Vert_{2,\alpha}\cdot\sum_{s=1}^{r-1}{\binom{r}{s}|\partial_t^s  v_{k-1}(\cdot,t) |_{C^{2,\alpha}}|\partial_t^{r-s} v_{k-1}(\cdot,t) |_{C^{2,\alpha}}}\\
&\hspace{2cm}+\frac{1}{2}|E[F_0]\left(0,\partial_t^r \widehat{g}(\cdot,t) \right)|_{C^{2,\alpha}}
\end{align*}
for all $t\in [0,T_x]$. Assuming that $\widehat{T}_x\in(0,T_x]$ is sufficiently small so that
\begin{equation*}
\left\Vert E[F_0] \right\Vert_{2,\alpha}\cdot\left|E[F_0]\left(0,\widehat{g}(\cdot,t)\right)\right|_{C^{2,\alpha}}\leq \frac{1}{4K_1(\alpha,a)}
\end{equation*}
for all $t\in[0,\widehat{T}_x]$ then we obtain
\begin{align*}
|\partial_t^r v_k(\cdot,t) |_{C^{2,\alpha}}\leq &\frac{1}{2}\left(|\partial_t^r v_{k-1}(\cdot,t) |_{C^{2,\alpha}}+\left|E[F_0]\left(0,\partial_t^r \widehat{g}(\cdot,t) \right)\right|_{C^{2,\alpha}} \right)\\
&+K_1(\alpha,a)\cdot   \left\Vert E[F_0] \right\Vert_{2,\alpha}\cdot\sum_{s=1}^{r-1}{\binom{r}{s}|\partial_t^s  v_{k-1}(\cdot,t) |_{C^{2,\alpha}}|\partial_t^{r-s} v_{k-1}(\cdot,t) |_{C^{2,\alpha}}}
\end{align*}
for all $t\in [0,\widehat{T}_x]$. By induction over $r\in\mathbb{N}$, using Lemma \ref{lem:7.8}, we obtain \eqref{eq:7.26} for all $|\beta|\leq 2$ and $r\in\mathbb{N}$. Let $m\geq 3$, assuming that \eqref{eq:7.26} holds for all $|\beta|\leq m-1$ and $r\in\mathbb{N}$. Then \eqref{eq:7.15}, \eqref{eq:5.18}, \eqref{eq:7.21} and \eqref{eq:7.12} imply for all $k\geq 1$ and $r\in\mathbb{N}$
\begin{align}\label{eq:7.23}
\begin{split}
|\partial_t^r  v_k(\cdot,t) |_{C^{m,\alpha}}\leq &K_2(\alpha,a) \cdot\left\Vert E[F_0] \right\Vert_{2,\alpha}|\partial_t^r v_{k-1}(\cdot,t) |_{C^{m,\alpha}}| v_{k-1}(\cdot,t) |_{C^{2,\alpha}}\\
&+K_2(\alpha,a) \cdot\left\Vert E[F_0] \right\Vert_{2,\alpha}\cdot\sum_{s=0}^{r-1}{|\partial_t^s v_{k-1}(\cdot,t) |_{C^{m,\alpha}}|\partial_t^{r-s} v_{k-1}(\cdot,t)|_{C^{2,\alpha}}}\\
&+C_1(m,\alpha,F_0)\cdot \sum_{s=0}^r{|\partial^s v_{k-1}(\cdot,t) |_{C^{m-1,\alpha}}|\partial^{r-s} v_{k-1}(\cdot,t) |_{C^{m-1,\alpha}}}\\
&\hspace{2cm}+\frac{1}{2}\left|E[F_0]\left(0,\partial_t^r\widehat{g}(\cdot,t) \right)\right|_{C^{m,\alpha}}
\end{split}
\end{align}
for all $t\in [0,\widehat{T}_x]$. Then \eqref{eq:7.18}, \eqref{eq:7.26}, \eqref{eq:7.23}
and the induction hypothesis imply
\begin{align}\label{eq:7.25}
\begin{split}
|\partial_t^r  v_k(\cdot,t) |_{C^{m,\alpha}}\leq &K_2(\alpha,a) \cdot\left\Vert E[F_0] \right\Vert_{2,\alpha}\left|E[F_0]\left(0,\widehat{g}(\cdot,t)\right)\right|_{C^{2,\alpha}}\left|\partial_t^r v_{k-1}(\cdot,t) \right|_{C^{m,\alpha}}\\
&+C_2\left(m,r,\alpha,a,F_0,\widehat{g}\right)\cdot\sum_{s=0}^{r-1}{|\partial_t^s v_{k-1}(\cdot,t)|_{C^{m,\alpha}}}\\
&+C_3\left(m,r,\alpha,a,F_0,\widehat{g}\right)+\frac{1}{2}\left|E[F_0]\left(0,\partial_t^r\widehat{g}(\cdot,t) \right)\right|_{C^{m,\alpha}}
\end{split}
\end{align}
for all $t\in [0,\widehat{T}_x]$. If $\widehat{T}_x\in (0,T_x]$ is small enough to that for all $t\in [0,\widehat{T}_x]$
\begin{equation*}
\left\Vert E[F_0] \right\Vert_{2,\alpha}\left|E[F_0]\left(0,\widehat{g}(\cdot,t)\right)\right|_{C^{2,\alpha}}\leq \frac{1}{2K_2(\alpha,a)}
\end{equation*}
Then, by induction over $r\in\mathbb{N}$, \eqref{eq:7.25} and Lemma \ref{lem:7.8} imply \eqref{eq:7.26} for all $|\beta|=m$ and $r\in\mathbb{N}$.

\end{proof}

\subsection{Construction of the global embedding}

\begin{proof}[Proof of Theorem \ref{globeinb}]
Let $F_0\in C^{\infty}(M,\mathbb{R}^q)$ be a free isometric embedding of the Riemannian manifold $(M,g(\cdot,0))$ and let $\left\{\varphi: U_i\longrightarrow B_{1}(0)\right\}_{1\leq i\leq m}$ be a family of smooth charts so that
$M=\bigcup_{i=1}^m{\varphi_i^{-1}(B_{1}(0))}$ and let $\left\{\psi_i\right\}_{1\leq i\leq m}\subset C^{\infty}(M)$ with $supp(\psi_i)\subset \varphi_i^{-1}(B_1(0))$ for all $1\leq i \leq m$ so that $\sum_{i=1}^m{\psi_i}= 1$. Then, we have for all $(x,t)\in M\times [0,T]$ the equation
\begin{equation}\label{eq:7.41}
g(x,t)=g(x,0)+\sum_{i=1}^m{\underbrace{\psi_i [g(x,t)-g(x,0)]}_{=: \widehat{g}^{(i)}(x,t)}}
\end{equation}

 Applying the arguments in
the proof of Theorem \ref{satz:7.1}/\ref{satz:7.2} we already know that there exists a $T_1\in (0,T]$ and a family of free embeddings $F_1\in C^{\infty}(M\times[0,T_1])$ so that for all $t\in [0,T_1]$ the equality
\begin{equation}\label{eq:7.27}
F_1(\cdot,t)^{\ast}(\delta)=g(\cdot,0)+\widehat{g}^{(1)}(\cdot,t)
\end{equation}
holds on $M$. As in \eqref{eq:7.26} the mapping is defined as
\begin{equation*}
F_1(x,t):=
\begin{cases}
F_0(x)+u^{(1)}(\varphi_1(x),t) &\text{if }x\in U_1 \\
F_0(x) &\text{else }
\end{cases} 
\end{equation*}
where $u^{(1)}\in C^{\infty}(\overline{B_1(0)}\times [0,T_1],\mathbb{R}^q)$ satisfies $supp(u^{(1)}(\cdot,t))\subset B_1(0)$ for all $t\in [0,T_1]$.

In the following let $\left. F_1(\cdot,t)\circ \varphi_2^{-1} \right|_{B_1(0)}\in C^{\infty}(\overline{B_1(0)},\mathbb{R}^q)$
be also denoted by  $F_1(\cdot,t)$ for all $t\in [0,T_1]$. Let $a_2\in C^{\infty}_0(B_1(0))$ so that $\left. a_2 \right|_{supp(\psi_2\circ \varphi_2^{-1})}= 1$ and let $T_2\in (0,T_1]$ so that
\begin{equation*}
\left\Vert E[F_1(\cdot,t)]\right\Vert_{2,\alpha}\left|E[F_1(\cdot,t)](0,\,^{\varphi_2}\widehat{g}^{(2)}(\cdot,t)) \right|_{C^{2,\alpha}}\leq \vartheta(\alpha,a_2)
\end{equation*}
for all $t\in[0,T_2]$ where $\vartheta>0$ is chosen according to Theorem \ref{satz:5.1}. Then for all $t\in [0,T_2]$ there exists $v^{(2)}(\cdot,t)\in C^{\infty}(\overline{B_1(0)},\mathbb{R}^q)$ so that $u^{(2)}(\cdot,t):= a_2^2\, v^{(2)}(\cdot,t)\in C^{\infty}_0(B_1(0),\mathbb{R}^q)$ solves
\begin{align}\label{eq:7.29}
\begin{split}
&\partial_i (F_1(x,t)+u^{(2)}(x,t))\cdot \partial_j (F_1(x,t)+u^{(2)}(x,t))\\
\stackrel{\hphantom{\eqref{eq:7.17}}}{=}&\partial_i F_1(x,t)\cdot \partial_j F_1(x,t)+\,^{\varphi_2}\widehat{g}^{(2)}_{ij}(x,t)\stackrel{\eqref{eq:7.27}}{=}\, ^{\varphi_2}g_{ij}(x,0)+\,^{\varphi_2}\widehat{g}^{(1)}_{ij}(x,t)+\,^{\varphi_2}\widehat{g}^{(2)}_{ij}(x,t)
\end{split}
\end{align}
for all $x\in B_1(0)$ and $1\leq i\leq j\leq n$. Using \eqref{eq:5.47} for each $t\in [0,T_2]$, the mapping $v^{(2)}(\cdot,t)\in C^{\infty}(\overline{B_1(0)},\mathbb{R}^q)$
is the $C^{2,\alpha}$-limit of the sequence $(v^{(2)}_k(\cdot,t))_{k\in\mathbb{N}}\subset C^{\infty}(\overline{B_1(0)},\mathbb{R}^q)$ where
\begin{align}\label{eq:7.28}
v^{(2)}_k(\cdot,t)=
\begin{cases}
0 & \text{if }  k=0\\
\Phi[F_1(\cdot,t),a_2,\,^{\varphi_2}\widehat{g}^{(2)}(\cdot,t)](v^{(2)}_{k-1}(\cdot,t))  &\text{if } k\geq 1
\end{cases}
\end{align}
Lemma \ref{lem:7.3}, \eqref{eq:5.18} \eqref{eq:5.14} and \eqref{eq:5.15} imply that $(v^{(2)}_k)_{k\in\mathbb{N}}\subset C^{\infty}(\overline{B_1(0)}\times[0,T_2],\mathbb{R}^q)$ and \eqref{eq:5.48} implies
\begin{equation}\label{eq:7.30}
|v^{(2)}_k(\cdot,t) |_{C^{2,\alpha}}\leq \left|E[F_1(\cdot,t)]\left(0,\,^{\varphi_2}\widehat{g}^{(2)}(\cdot,t)\right)\right|_{C^{2,\alpha}}
\end{equation}
We define $(F_2(\cdot,t))_{t\in[0,T_2]}\subset C^{\infty}(M,\mathbb{R}^q)$ as follows
\begin{equation}\label{eq:7.40}
F_2(x,t):=
\begin{cases}
F_1(x,t)+u^{(2)}(\varphi_2(x),t) &\text{if }x\in U_2 \\
F_1(x,t) &\text{else}
\end{cases} 
\end{equation}
Then \eqref{eq:7.29} implies
\begin{equation*}
F_2(\cdot,t)^{\ast}(\delta)=g(\cdot,0)+\widehat{g}^{(1)}(\cdot,t)+\widehat{g}^{(2)}(\cdot,t)
\end{equation*}
for all $t\in [0,T_2]$. It is our aim to prove that $F_2\in C^{\infty}(M\times [0,T_2],\mathbb{R}^q)$ if $T_2\in (0,T_1]$
is small enough. In order to prove it, we show that for all $r\in\mathbb{N}$ and $\beta\in\mathbb{N}^n$ there exists a constant $C(r,\beta)>0$ so that for all $k\in\mathbb{N}$ the estimate
\begin{align}\label{eq:7.34}
|\partial_t^r \partial^{\beta} v^{(2)}_k |_{C^0(\overline{B_1(0)}\times [0,T_2],\mathbb{R}^q)}\leq C(r,\beta)
\end{align}
holds. In this step we need to pay regard to the \textit{time-dependency} of the operator $E[F_1(\cdot,t)]$ (cf. \eqref{eq:5.19}). Let
\begin{align*}
\partial_t^r E[F_1(\cdot,t)]: C^{\infty}(\overline{B_1(0)},\mathbb{R}^n) &\times C^{\infty}(\overline{B_1(0)},\mathbb{R}^{\frac{n}{2}(n+1)})\longrightarrow C^{\infty}(\overline{B_1(0)},\mathbb{R}^q)\\
\partial_t^r E[F_1(\cdot,t)](h,f)(x)&:= \partial_t^r\Theta[F_1(\cdot,t)](x)\cdot\begin{pmatrix} h(x)\\ f(x) \end{pmatrix}
\end{align*}
for all $t\in[0,T_2]$, where $\Theta[F_1(\cdot,t)]$ is defined in \eqref{eq:5.41}. The estimate
\begin{align}\label{eq:7.37}
\begin{split}
|\partial_t^r E[F_1(\cdot,t)](h,f)|_{C^{m,\alpha}}\leq  C(m,\alpha,r,F_1)\cdot\left(|h|_{C^{m,\alpha}}+|f|_{C^{m,\alpha}}\right)
\end{split}
\end{align}
is analogous to Lemma \ref{lem:7.6}. In order to prove the following estimate we point out that
\begin{equation}\label{eq:7.36}
\partial_t^r( E[F_1(\cdot,t)](h(\cdot,t),f(\cdot,t)))=\sum_{s=0}^r{\binom{r}{s}\partial_t^s E[F_1(\cdot,t)](\partial_t^{r-s} h(\cdot,t),\partial_t^{r-s} f(\cdot,t))}
\end{equation}
for all $t\in [0,T_2]$ where $h\in C^{\infty}\left(\overline{B_1(0)}\times [0,T_2],\mathbb{R}^n\right)$ and $f\in C^{\infty}\left(\overline{B_1(0)}\times [0,T_2],\mathbb{R}^{\frac{n}{2}(n+1)}\right)$.
Then \eqref{eq:7.28}, \eqref{eq:5.18}, \eqref{eq:7.36}, \eqref{eq:7.37}, \eqref{eq:7.12} and \eqref{eq:7.30} imply for all $r\in\mathbb{N}$ and $k\geq 1$
\begin{align*}
\begin{split}
& |\partial_t^r v^{(2)}_k(\cdot,t)|_{C^{2,\alpha}}\\
\leq&2 K_1(\alpha,a_2)\cdot\left\Vert  E[F_1(\cdot,t)]\right\Vert_{2,\alpha}\left|E[F_1(\cdot,t)]\left(0,\,^{\varphi_2}\widehat{g}^{(2)}(\cdot,t)\right)\right|_{C^{2,\alpha}}|\partial_t^{r} v^{(2)}_{k-1}(\cdot,t) |_{C^{2,\alpha}}\\
&+K_1(\alpha,a_2)\cdot\left\Vert  E[F_1(\cdot,t)] \right\Vert_{2,\alpha}\cdot\sum_{s=1}^{r-1}{\binom{r}{s}|\partial_t^{s}  v^{(2)}_{k-1}(\cdot,t) |_{C^{2,\alpha}}|\partial_t^{r-s} v^{(2)}_{k-1}(\cdot,t) |_{C^{2,\alpha}}}\\
&+K_1(\alpha,a_2)\cdot \sum_{s=1}^r   \left\Vert \partial_t^s E[F_1(\cdot,t)] \right\Vert_{2,\alpha}\cdot\sum_{\widehat{s}=0}^{r-s}{\binom{r-s}{\widehat{s}}|\partial_t^{\widehat{s}}  v^{(2)}_{k-1}(\cdot,t) |_{C^{2,\alpha}}|\partial_t^{r-s-\widehat{s}} v^{(2)}_{k-1}(\cdot,t)|_{C^{2,\alpha}}}\\
&+C\left(r,\alpha,a_2,F_1,\,^{\varphi_2}\widehat{g}^{(2)}\right)
\end{split}
\end{align*}
for all $t\in [0,T_2]$. We may assume that $T_2\in (0,T_1]$ is small enough, so that
\begin{equation*}
\left\Vert  E[F_1(\cdot,t)]\right\Vert_{2,\alpha}\left|E[F_1(\cdot,t)]\left(0,\,^{\varphi_2}\widehat{g}^{(2)}(\cdot,t)\right)\right|_{C^{2,\alpha}}\leq \frac{1}{4 K_1(\alpha,a_2)}
\end{equation*}
for all $t\in [0,T_2]$. Using Lemma \ref{lem:7.8} we infer estimate \eqref{eq:7.34} for all $|s|\leq 2$ and $r\in\mathbb{N}$
by induction over $r\in\mathbb{N}$.

Let $m\geq 3$, under the assumption that \eqref{eq:7.34} holds for all $|s|\leq m-1$ and $r\in\mathbb{N}$. Then for all $r\in\mathbb{N}$ and $k\geq 1$;  \eqref{eq:7.28}, \eqref{eq:5.18}, \eqref{eq:7.36}, \eqref{eq:7.13}, \eqref{eq:7.12} and \eqref{eq:7.37} imply
\begin{align*}
\begin{split}
& |\partial_t^r v^{(2)}_k(\cdot,t) |_{C^{m,\alpha}}\\
\leq&K_2(\alpha,a_2)\cdot\left\Vert  E[F_1(\cdot,t)] \right\Vert_{2,\alpha}\left|E[F_1(\cdot,t)]\left(0,\,^{\varphi_2}\widehat{g}^{(2)}(\cdot,t)\right)\right|_{C^{2,\alpha}}\cdot |\partial_t^r  v^{(2)}_{k-1}(\cdot,t) |_{C^{m,\alpha}}\\
&+C\left(m,r,\alpha,a_2,F_1,\,^{\varphi_2}\widehat{g}^{(2)}\right)\cdot\sum_{s=0}^{r-1}{|\partial_t^s  v^{(2)}_{k-1}(\cdot,t) |_{C^{m,\alpha}}|\partial_t^{r-s} v^{(2)}_{k-1}(\cdot,t) |_{C^{2,\alpha}}}\\
&+C\left(m,r,\alpha,a_2,F_1,\,^{\varphi_2}\widehat{g}^{(2)}\right)\cdot\sum_{s=0}^r{|\partial_t^s v^{(2)}_{k-1}(\cdot,t)|_{C^{m-1,\alpha}}|\partial_t^{r-s} v^{(2)}_{k-1}(\cdot,t) |_{C^{m-1,\alpha}}}\\
&+C\left(m,r,\alpha,a_2,F_1,\,^{\varphi_2}\widehat{g}^{(2)}\right)
\end{split}
\end{align*}
for all $t\in [0,T_2]$. If $T_2 \in (0,T_1]$ is chosen small enough so that
\begin{align*}
\left\Vert  E[F_1(\cdot,t)] \right\Vert_{2,\alpha} \left|E[F_1(\cdot,t)]\left(0,\,^{\varphi_2}\widehat{g}^{(2)}(\cdot,t)\right)\right|_{C^{2,\alpha}}\leq \frac{1}{2 K_2(\alpha,a_2)}
\end{align*}
then, by induction over $r\in \mathbb{N}$, Lemma \ref{lem:7.8}  implies \eqref{eq:7.34} for all $|s|=m$ and all $r\in \mathbb{N}$.

The construction \eqref{eq:7.40} may be applied iteratively to the charts $\left\{\varphi_i: U_i\longrightarrow B_{1}(0)\right\}_{3\leq i\leq m}$ 
which proves the claim, using decomposition \eqref{eq:7.41}.

\end{proof}

\appendix
\section{H\"older spaces}
Let $C^{m,\alpha}(B_1(0))$ be the H\"older-space where $m\in\mathbb{N}$ and $\alpha\in (0,1)$. We define
\begin{equation*}
 |u|_{C^{0,\alpha}}:=\sup_{x\in B_1(0)}|u(x)|+\sup_{x,y\in B_1(0), x\neq y}\left\{\frac{|u(x)-u(y)|}{|x-y|^{\alpha}} \right\}
\end{equation*}
and if $m\geq 1$
\begin{align*}
\left|u\right|_{C^{m,\alpha}}:=| u |_{C^{0,\alpha}}+\sum_{|s|=m}{|\partial^s u|_{C^{0,\alpha}}} 
\end{align*}
If $u,v\in C^{0,\alpha}(B_1(0))$ we have
\begin{equation}\label{eq:A.2}
|uv|_{C^{0,\alpha}}\leq |u|_{C^{0,\alpha}}|v|_{C^{0,\alpha}}
\end{equation}
If $k\in\mathbb{N}$ and $\beta\in (0,1)$ such that $k+\beta<m+\alpha$
then
\begin{equation}\label{eq:A.6}
\left| u \right|_{C^{k,\beta}}\leq C(m,k,\alpha,\beta)\cdot\left| u \right|_{C^{m,\alpha}}
\end{equation}
The next estimate follows from the formula
\begin{equation}\label{eq:A.7}
\partial^{\beta}(uv)=\sum_{\gamma\leq \beta}{\binom{\beta}{\gamma}}\partial^{\gamma}u\ \partial^{\beta-\gamma}v
\end{equation}
Let $u,v\in C^{m,\alpha}(B_1(0))$ then \eqref{eq:A.2} and \eqref{eq:A.6} imply
\begin{align}\label{eq:A.8}
\begin{split}
&\left|uv \right|_{C^{m,\alpha}}=\left|uv \right|_{C^{0,\alpha}}+\sum_{|s|=m}{\left|\partial^s(uv) \right|_{C^{0,\alpha}}} \\
\leq&\left|u\right|_{C^{0,\alpha}}\left|v\right|_{C^{m,\alpha}}+\left|u\right|_{C^{m,\alpha}}\left|v\right|_{C^{0,\alpha}}+C(m,\alpha)\cdot\left|u\right|_{C^{m-1,\alpha}}\left|v\right|_{C^{m-1,\alpha}}
\end{split}
\end{align}
and
\begin{equation}\label{eq:A.9}
\left|uv \right|_{C^{m,\alpha}}\leq C(m,\alpha)\cdot \left|u\right|_{C^{m,\alpha}}\left|v\right|_{C^{m,\alpha}}
\end{equation}
as well as 
\begin{align}\label{eq:A.10}
\begin{split}
\left|u \right|_{C^{m,\alpha}}\leq C(\alpha)\cdot\left| u \right|_{C^{2,\alpha}}+\sum_{|s|=k}{\left|\partial^s u \right|_{C^{m-k,\alpha}}} 
\end{split}
\end{align} 
for all $1\leq k\leq m$. We also introduce the following H\"older-spaces of vector valued functions
\begin{align*}
C^{m,\alpha}(B_1(0),\mathbb{R}^q):=\bigl\{(u_1,...,u_q)^{\top}: B_1(0)\longrightarrow \mathbb{R}^q\ |\ u_j\in C^{m,\alpha}(B_1(0))\ \forall 1\leq j\leq q \bigr\}
\end{align*}
with the norm
\begin{align*}
|u|_{C^{m,\alpha}}:= \sum _{j=1}^q{\left|u_j \right|_{C^{m,\alpha}}}
\end{align*}
For all $k\in\mathbb{N}$ and $\beta\in (0,1)$ with $k+\beta<m+\alpha$ we have
\begin{equation}\label{eq:A.12}
\left|u\right|_{C^{k,\beta}}\leq C(m,k,\alpha,\beta)\cdot\left|u\right|_{C^{m,\alpha}}
\end{equation}
If $a\in C^{m,\alpha}(B_1(0))$ and $u\in C^{m,\alpha}(B_1(0),\mathbb{R}^q)$ then \eqref{eq:A.8} implies
\begin{align}\label{eq:A.13}
\begin{split}
\left|au\right|_{C^{m,\alpha}}\leq \left| a \right|_{C^{0,\alpha}}\left| u \right|_{C^{m,\alpha}}+\left| a \right|_{C^{m,\alpha}}\left| u \right|_{C^{0,\alpha}}+C(m,\alpha)\cdot \left| a \right|_{C^{m-1,\alpha}}\left| u \right|_{C^{m-1,\alpha}}
\end{split}
\end{align}
\eqref{eq:A.6} and \eqref{eq:A.12} imply
\begin{equation*}
\left|au\right|_{C^{m,\alpha}}\leq C(m,\alpha)\cdot \left| a \right|_{C^{m,\alpha}}\left| u \right|_{C^{m,\alpha}}
\end{equation*}
If $v\in C^{m,\alpha}(B_1(0),\mathbb{R}^q)$ then \eqref{eq:A.8} implies
\begin{align}\label{eq:A.15}
\left|u\cdot v\right|_{C^{m,\alpha}}\leq\left|u \right|_{C^{0,\alpha}}\left|v \right|_{C^{m,\alpha}}+\left|u \right|_{C^{m,\alpha}}\left|v \right|_{C^{0,\alpha}}+C(m,\alpha)\cdot \left|u \right|_{C^{m-1,\alpha}}\left|v \right|_{C^{m-1,\alpha}}
\end{align}
and using \eqref{eq:A.12}
\begin{equation}\label{eq:A.16}
\left|u\cdot v\right|_{C^{m,\alpha}}\leq C(m,\alpha)\cdot \left|u \right|_{C^{m,\alpha}}\left|v \right|_{C^{m,\alpha}}
\end{equation}
Finally \eqref{eq:A.10} implies
\begin{align}\label{eq:A.11}
\begin{split}
\left|u \right|_{C^{m,\alpha}}\leq \left|u \right|_{C^{0,\alpha}}+\sum_{|s|=k}{\left|\partial^s u \right|_{C^{m-k,\alpha}}}
\end{split}
\end{align}
for all $1\leq k\leq m$.

\begin{lem}\label{lem:7.8}
Let $(a_k)_{k\in\mathbb{N}}\geq 0$ so that 
\begin{equation*}
a_{k+1}\leq \frac{a_k}{2}+C
\end{equation*}
for all $k\in\mathbb{N}$ where $C\geq 0$ does not depend on $k$, then the estimate
\begin{equation*}
a_{k}\leq a_0+2C
\end{equation*}
holds for all $k\in\mathbb{N}$.
\end{lem}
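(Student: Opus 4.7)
The statement is an elementary recursion bound, so the plan is short. I would prove it by induction on $k$, using the nonnegativity of $a_0$ and $C$ to close the induction, or equivalently by unrolling the recursion into a geometric sum.

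For the inductive approach the base case $k=0$ is immediate since $a_0 \leq a_0 + 2C$ (here $C \geq 0$ is used). For the inductive step, assume $a_k \leq a_0 + 2C$; then by the hypothesis
\begin{equation*}
a_{k+1} \leq \frac{a_k}{2} + C \leq \frac{a_0 + 2C}{2} + C = \frac{a_0}{2} + 2C \leq a_0 + 2C,
\end{equation*}
where the last inequality uses $a_0 \geq 0$. This closes the induction.

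Alternatively, one may iterate the inequality directly: a straightforward induction shows
\begin{equation*}
a_k \leq \frac{a_0}{2^k} + C \sum_{j=0}^{k-1} \frac{1}{2^j} \leq a_0 + 2C,
\end{equation*}
since the geometric series is bounded by $2$. Both routes avoid any nontrivial obstacle; the only point worth noticing is the implicit assumption $a_0 \geq 0$ (guaranteed by $(a_k)_{k\in\mathbb{N}} \geq 0$), which is what allows the bound $a_0 + 2C$ to absorb both the initial term and the accumulated constants without needing a sharper estimate like $\max\{a_0, 2C\}$. I would write either argument out in a few lines.
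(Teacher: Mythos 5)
Your primary induction argument is exactly the proof given in the paper (same base case, same one-line inductive step using $a_0\geq 0$), so the two are essentially identical; the geometric-series unrolling you mention as an alternative is a harmless variant. The proposal is correct.
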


\begin{proof}
The estimate holds if $k=0$. Suppose the estimate is true for $k\in\mathbb{N}$ then
\begin{equation*}
a_{k+1}\leq \frac{a_k}{2}+C \leq \frac{a_0+2C}{2}+C\leq a_0+2C
\end{equation*}
\end{proof}

\bibliographystyle{alphaurl}
\bibliography{biblio}

\end{document}